\definecolor{NoteColor}{rgb}{1,0,0}
\renewcommand{\textsc}{\textcolor{red}}
\newtheorem{theorem}{\rm\bf Theorem}[section]
\newtheorem{proposition}[theorem]{\rm\bf Proposition}
\newtheorem{lemma}[theorem]{\rm\bf Lemma}
\newtheorem{corollary}[theorem]{\rm\bf Corollary}
\newtheorem*{theorem 1}{\rm\bf Proposition 1}
\newtheorem*{theorem 2}{\rm\bf Proposition 2}
\theoremstyle{definition}
\newtheorem{definition}[theorem]{\rm\bf Definition}
\theoremstyle{remark}
\def\interieur#1{\mathord{\mathop{\kern 0pt #1}\limits^\circ}}
\title[Ideal triangles, hyperbolic surfaces and the Thurston metric]{Ideal triangles, hyperbolic surfaces and the Thurston metric on Teichm\"uller space}
\author[Athanase Papadopoulos]{Athanase Papadopoulos}
\thanks{}
 \address{Institut de Recherche Math\'ematique Avanc\'ee\\ CNRS et Universit\'e de Strasbourg\\\small 7 rue Ren\'e
  Descartes - 67084 Strasbourg Cedex, France}
\date{\today}
\begin{document}
\maketitle
\begin{abstract}
These are notes on the hyperbolic geometry of surfaces, Teichmüller spaces and Thurston's metric on these spaces.  They are associated with lectures I gave at the Morningside Center of Mathematics of the Chinese Academy of Sciences in March 2019 and at the Chebyshev Laboratory of the Saint Petersburg State University in May 2019.  In particular, I survey several results on the behavior of stretch lines, a distinguished class of geodesics for Thurston's metric and I point out several analogies between this metric and Teichm\"uller's metric. Several open questions are addressed. The final version of these notes will appear in the book \emph{Moduli Spaces and Locally Symmetric Spaces} edited by L. Ji and S.-T. Yau.

\medskip

\noindent AMS classification: 30F60, 32G15, 57M50,  53A35, 53C22

\medskip

\noindent Keywords: Hyperbolic structure, Teichmüller space, Thurston boundary, Thurston metric, stretch line, Finsler structure.
\end{abstract}
 \section{Introduction}

These notes are associated with lectures I gave at the Morningside Center of Mathematics of the Chinese Academy of Sciences in March 2019 and at the Chebyshev Laboratory of the Saint Petersburg State University in May 2019. The aim of these lectures was to provide the students with an introduction to Thurston's metric on Teichm\"uller space, preceded by some necessary background on Thurston's theory on surfaces.  

Thurston's metric on Teichm\"uller space provides a point of view for the study of deformations of Riemann surfaces in which hyperbolic geometry plays a central role, as opposed to the study of Teichm\"uller's metric  which involves in an essential way complex analysis, in particular the theory of quasiconformal mappings. It is however useful to recall right at the beginning that despite the fact that the two points of view are different, there are several formal analogies between them, and before starting a formal survey of the theory, we invoke some of them:

(1) There is a striking correspondence between, on the one hand, two different definitions of the Thurston metric (one definition based on the smallest Lipschitz constant of homeomorphisms between two marked hyperbolic surfaces, and another one that uses the comparison between the length spectra of simple closed geodesics), and on the other hand, two definitions of the Teichm\"uller metric (one definition that uses the smallest  distorsion of quasiconformal homeomorphisms between two conformal structures,  and another one based on the comparison of extremal lengths of simple closed curves). We shall review all these definitions in \S \ref{s:metric} below. 

 (2)  Thurston's metric involves the use of pairs consisting of a maximal geodesic lamination and a transverse measured foliation that define geodesics for this metric, and the Teichm\"uller metric involves pairs of transverse measured foliations that define geodesics for it. This is reviewed  in \S \ref{s:metric} below.
 
 (3)  The definition of the infinitesimal Finsler structures for these metrics is based on the logarithmic derivative of the hyperbolic  length function in the case of the Thurston metric, and on the logarithmic derivative of the extremal length function in the case of the Teichm\"uller metric. This is also reviewed  in \S \ref{s:metric} below.
 
  (4) There are results on the asymptotic behavior of stretch lines that are comparable to those on the asymptotic behavior of Teichm\"uller geodesics. This is reviewed in \S\S \ref{s:stretch},   \ref{s:pairs} and \ref{s:anti} below.
  
  There are many other analogies between the two metrics, some of them still awaiting for an explanation and leading to open problems.

In these notes, the main results I survey concerning Thurston's metric, after recalling the basic material, concern the positive and the negative limiting behavior of stretch lines. These lines are equipped with a natural parametrization for which they are geodesics for Thurston's metric. Their definition is based on the construction of Lipschitz maps between ideal hyperbolic triangles. The reason for which a substantial part of this survey is dedicated to these lines is that their study and that of their limiting behavior use several fundamental aspects of Thurston's theory on surfaces, including geodesic laminations, measured foliations, the boundary structure of Teichm\"uller space and others. The investigation of  the asymptotic behavior of stretch lines involves a series of estimates on the length functions of simple closed geodesics and of geodesic laminations associated with hyperbolic structures tending to infinity in Teichm\"uller space, and a comparison between these length functions and intersection functions associated with measured foliations, called horocyclic foliations We review all these topics before surveying the main results.  
   
The plan of the next sections is the following: \S \ref{s:notation} contains notation and background material on the hyperbolic geometry of surfaces, including a study of triangles, ideal triangles,  area, horocycles, Teichm\"uller spaces, geodesic laminations, measured foliations and Thurston's compactification of Teichm\"uller space. In \S \ref{s:ideal}, we study hyperbolic surfaces obtained by gluing ideal triangles. 
   \S \ref{s:metric} contains an introduction to Thurston's metric on Teichm\"uller space and an exposition of its basic geometric features, in particular its completeness and its Finsler structure. In \S \ref{s:stretch}, we survey the properties of stretch lines and their limiting behavior. In \S \ref{s:pairs}, we study the relative asymptotic behavior of pairs of stretch lines, namely, questions of parallelism and divergence between such  pairs.
  \S \ref{s:anti} is concerned with the asymptotic behavior of anti-stretch lines (that is, stretch lines traversed in the reverse direction). In general, these lines are not geodesics for Thurston's metric; they are geodesics for another metric which we call the negative Thurston metric. The results surveyed in \S\S  \ \ref{s:pairs} and \ref{s:anti} are due to Guillaume Th\'eret. 
We conclude, in \S  \ref{s:conclusion}, with some remarks and open questions.
   
   Thurston introduced his metric in 1986 \cite{Thurston1986}, and the theory became gradually an active research field. A first survey, with a comparison of the results on this metric with those of the Teichm\"uller metric, appeared in 2007 \cite{PT}. A set of open problems on Thurston's metric appeared in 2015 \cite{S}, after a conference held on this topic at the American Institute of mathematics in Palo Alto. Recently, several new results and directions related to this metric were obtained, see in particular the papers \cite{AD, DGK, DLRT, HS, HP1, HP2, HS}. This renewal of interest in this topic was the motivation for giving these lectures and writing these notes.

\section{Hyperbolic geometry and Teichm\"uller spaces} \label{s:notation}

This section contains an introduction to some basic notions of hyperbolic geometry and related matters that will be useful in the rest of this survey. We start with some standard terminology and notation.

\subsection{Hyperbolic structures}\label{ss:hyp} We shall denote the hyperbolic plane by $\mathbb{H}^2$.
It is sometimes practical to use the upper half-plane model of this plane. We recall that this is the subset  $\{(x,y)\in \mathbb{R}^2, \ y>0 \}$ of the Euclidean plane $\mathbb{R}^2$ endowed with the $x,y$ coordinates, equipped with an infinitesimal length element at each point given by 
\[ds= \frac{\sqrt{dx^2+dy^2}}{y}.\]
This means that for any parametrized $C^1$ curve $\gamma:[a,b]\to \mathbb{H}^2$, where $[a,b]$ is a compact interval in $\mathbb{R}$ and where in coordinates $\gamma(t)=(x(t),y(t))$, one can compute its length $L(\gamma)$
by the formula
\[L(\gamma)=\int_a^b\frac{\sqrt{dx^2(t)+dy^2(t)}}{y(t)}dt
.\]
A metric is then defined on $\mathbb{H}^2$ by setting the distance between two arbitrary points to be the infimum of the length of all $C^1$ curves joining them. Such a metric, where the distance between two points is equal to the infimum of lengths of paths 
joining them, is called a \emph{length metric}.

We recall that a \emph{geodesic} in a set $X$ equipped with a distance function $d$ is a map $\gamma:I\to X$, where $I$ is an interval of $\mathbb{R}$, satisfying $d(\gamma(t_1),\gamma(t_3))= 
d(\gamma(t_1),\gamma(t_2))+ d(\gamma(t_2),\gamma(t_3))$ for any  $t_1\leq t_2\leq t_3$ in $I$. If the interval $I$ is compact, then the geodesic is called a \emph{geodesic segment}, or, more simply,  a \emph{segment}. If $I=[0,\infty)$, then the geodesic is called a \emph{geodesic ray}, or a \emph{ray}.
We shall often identify a geodesic with its image.

It is well known that a geodesic in the upper half-plane model of the hyperbolic plane is either the intersection of this upper half-plane with a Euclidean circle whose center is on the line $y=0$, or the intersection of the upper half-plane with a vertical Euclidean line. 

In the upper half-plane model $\mathbb{H}^2$, the isometry group of the hyperbolic plane is identified with the projective special linear group $\mathrm{PSL}(2,\mathbb{R})$ acting on $\mathbb{H}^2$ by M\"obius transformations, that is, transformations of the form  
\[z \mapsto \frac{az+b}{cz+d}, \ a,b,c,d\in \mathbb{R}, \ ad-bc=1
.\]
This action induces a transitive action on the unit tangent bundle of $\mathbb{H}^2$, that is, on pairs consisting of a point in $\mathbb{H}^2$ and a unit vector in the tangent space to $\mathbb{H}^2$ at that point.

At one place in these notes, we shall also consider the disc model of the hyperbolic plane  for the purpose of better visualizing symmetry (Figure \ref{fig:horo} below). This is the open unit disc in the complex plane equipped at each point with the infinitesimal length element given, in the$(x,y)$ coordinates, by 
\[ds=2 \frac{\sqrt{dx^2+dy^2}}{1-(x^2+y^2)}.\] 
The distance is defined as above, as the length distance associated with this infinitesimal length element. A geodesic in this model is either the intersection of the unit disc with a Euclidean circle perpendicular to the boundary circle, or a diameter.
In this model, the boundary at infinity is the unit circle.

We mention however that all of hyperbolic geometry can be developed without any model. This is done e.g. in Lobachevsky's \emph{Pangeometry} \cite{Lobachevsky}. For a  recent model-free introduction to hyperbolic geometry, including the derivation of the trigonometric formulae, the reader can refer to the lecture notes \cite{AP}. Working in some models may simplify some calculations, especially because it makes use of the underlying more familiar Euclidean geometry, but we find it aesthetically less appealing.  Lobachevsky made extensive computations (in particular, elaborate computations of area and volume) without using any model.

The upper half-plane and the disc models of the hyperbolic plane are conformal in the sense that in each of these models the notion of angle between any two lines at a point where they intersect coincides with the notion of angle in the underlying Euclidean plane.

 In what follows, $S$ is an oriented surface of finite topological type, that is, $S$ is obtained from a compact oriented surface without boundary by deleting a finite number of points. A \emph{hyperbolic structure} on $S$ is a maximal atlas 
 $\{(U_i,\phi_i)\}_{i\in\mathcal{I}}$ 
where  for each $i\in\mathcal{I}$,
 $U_i $ is an open subset  of $S$   and
$\phi_i$ a homemorphism from $U_i$ onto an open subset of the
hyperbolic plane, satisfying $\displaystyle \bigcup_{i\in\mathcal{I}} U_i=S$ and such that any coordinate change map of the form
$\phi_i\circ\phi_j^{-1}$ is, on each connected component of $\phi_j(U_i\cap U_j)$,
 the restriction of an orientation-preserving isometry of $\mathbb{H}^2$. A pair $(U_i,\phi_i)$ is called a \emph{chart} of the atlas. $U_i$ is a chart domain and $\phi_i$ a chart map.

  We shall assume that $S$ has negative Euler characteristic, so that it admits at least one 
a hyperbolic structure.

A surface equipped with a hyperbolic structure carries a 
  length metric
obtained by taking on each
chart domain $U_i$ the pull-back by the associated chart map  $\phi_i$ of
the metric on the image $\phi_i(U_i)$ induced from its inclusion in $\mathbb{H}^2$.
The metrics obtained on the various open sets $U_i$ of $S$ provide a consistent way of measuring lengths of piecewise $C^1$ paths in $S$, and this naturally leads to a length metric on $S$, called a \emph{hyperbolic metric}. The surface $S$ equipped with this metric is called a \emph{hyperbolic surface}.

All the hyperbolic metrics that we shall consider on $S$ will be complete and of finite area. This means that every puncture of $S$ is a \emph{cusp}, that is,  it has a neighborhood in the surface which, geometrically, is an annulus obtained as the quotient of a region of the upper half-plane model of hyperbolic space of the form $\{y\geq a\}$, where $a$ is some positive constant, by the cyclic group generated by the isometry $z\mapsto z+1$. More intrinsically, such an annulus is isometric to the quotient of a horodisc in the hyperbolic plane by a parabolic transformation preserving this horodisc. (We shall recall the notion of horodisc in \S \ref{ss:horocycles} below.)  

At some places, we shall consider hyperbolic surfaces with boundary, and in this case all the boundary components will be closed geodesics.

In the rest of these notes, we denote by $\mathcal{S}$ the set of  homotopy classes of essential simple closed curves on $S$, that is, simple closed curves that are neither homotopic to a point nor to a puncture. A theorem that originates in the work of Hadamard \cite{Hadamard} says that each element of $S$ is represented by a unique closed geodesic on the surface. This is one of the main building blocks of the theory of hyperbolic geometry of surfaces with non-trivial fundamental group.

\subsection{Teichm\"uller space}\label{ss:T} The Teichm\"uller space  $\mathcal{T}=\mathcal{T}(S)$ of $S$ is the set of isotopy classes of hyperbolic metrics on $S$. It is equipped with a topology which can be defined in several equivalent ways. We now recall one of them.

  We start with the map from $\mathcal{T}=\mathcal{T}(S)$  to the space $\mathbb{R}_+^{\mathcal{S}}$ of positive functions on $\mathcal{S}$ which associates to each element $g$ of  $\mathcal{T}(S)$ the function $\gamma\mapsto l_g(\gamma)$ on $\mathcal{S}$, 
where  $l_g(\gamma)$ denotes the length of the unique geodesic on $S$ representing the homotopy class $\gamma$ with respect to the hyperbolic metric $g$. This map $\mathcal{T}\to \mathbb{R}_+^{\mathcal{S}}$  is injective. This is the famous result stating that a hyperbolic surface is determined by its marked simple length spectrum. Finally, we equip $\mathcal{T}=\mathcal{T}(S)$ with the topology induced on the image of this embedding by the weak topology on the space $\mathbb{R}_+^{\mathcal{S}}$. The details are the subject of  \cite[\S1.4]{FLP}.  

With this topology, the Teichm\"uller space of a surface of genus $g\geq 2$ and with $n\geq 0$ punctures is homeomorphic to $\mathbb{R}^{6g-6+2n}$. 

An explicit way of obtaining a homeomorphism  $\mathcal{T}(S)\to\mathbb{R}^{6g-6+2n}$ is to use the  parametrization of $\mathcal{T}(S)$ by the so-called Fenchel--Nielsen coordinates. This parametrization depends on the choice of a maximal collection of disjoint simple closed geodesics (or, equivalently, a pair of pants decomposition of $S$). To each such geodesic are then associated two parameters: one parameter being its length (an element in $(0,\infty)$), and the other one being a twist parameter (an element in $\mathbb{R}$) which measures how the pairs of pants on the two sides of the geodesic (the two pair of pants may be equal) are glued together in the surface. The twist parameter depends on the choice of an origin for the gluing. A careful description of the Fenchel-Nielsen parameters is contained in Thurston's book \cite[p. 271]{Thurston-Book}. The relation between the two topologies of  $\mathcal{T}(S)$ that we just recalled follows from \cite[Expos\'e 7]{FLP}.

In \S \ref{s:metric}, we shall consider an asymmetric metric on $\mathcal{T}(S)$. The topology of Teichm\"uller space is also the one induced by this metric, with an appropriate definition for the topology induced by an asymmetric metric. We shall discuss this in detail in \S \ref{s:metric}. 
 
\subsection{Boundary}

The hyperbolic plane $\mathbb{H}^2$ has a natural boundary, denoted by $\partial\mathbb{H}^2$. It is obtained by adjoining to $\mathbb{H}^2$  the space of equivalence classes of asymptotic geodesic rays. In this context, two geodesic rays $r_1:[0,\infty)\to \mathbb{H}^2$ and $r_2:[0,\infty)\to \mathbb{H}^2$ are said to be \emph{asymptotic} if their images are at a bounded distance from each other, that is, if there exists a constant $C$ satisfying $d(r_1(t),r_2(t))<C$ for any $t$ in $[0,\infty)$ where $d$ is the hyperbolic distance. Seen from an arbitrary point of $\mathbb{H}^2$, the boundary of this space may be identified with the space of (endpoints of) geodesic rays  starting at that point.  (This follows from the fact that there are no asymptotic rays that start at the same point.) The hyperbolic metric does not extend to the boundary, but the topology does. When the boundary is seen as the set of endpoints of geodesic rays starting at a basepoint, the space union its boundary is equipped with the topology obtained by adding to each geodesic ray its endpoint (or point at infinity). There are several ways of defining formally this topology by providing a sub-basis for the open sets, and to see that it does not depend on the choice of the basepoint. One possible choice of such a sub-basis is the union of the open sets of $\mathbb{H}^2$ together with a new open set for each open half-plane in $\mathbb{H}^2$. These new open sets represent a chart near the boundary: one declares that a point in $\mathbb{H}^2$ belongs to this set if, as a point of the hyperbolic plane, it belongs to the corresponding half-plane, and that a point on the boundary $\partial \mathbb{H}^2$  belongs to this set if and only if it can be represented by a geodesic ray that lies entirely in the given half-plane. With this topology, the union $\overline{\mathbb{H}^2}= \mathbb{H}^2\cup\partial\mathbb{H}^2$ is homeomorphic to a closed disc. In the upper half-plane model, the boundary of the hyperbolic plane is the union of the line $y=0$ with the point at infinity, equipped with a topology that makes this union homeomorphic to a circle.  

For any two distinct points  in $\partial\mathbb{H}^2$, there is a unique geodesic in $\mathbb{H}^2$ having these two points as endpoints. We say that the geodesic \emph{joins} the two points.

The isometry group of the hyperbolic plane acts triply transitively on the boundary of this plane, that is, it acts transitively on ordered triples of distinct points on this boundary (but it is not true that the isometry group acts triply transitively on the boundary when the latter is seen as the set of endpoints of rays starting at a given point!). The triple transitivity of this action can be proved using linear algebra and the identification we recalled in \S \ref{ss:hyp} of the isometry group of the upper half-space model of the hyperbolic plane with the group $\mathrm{PSL}(2,\mathbb{R})$, but it can also be easily deduced from the axioms of hyperbolic geometry.

\subsection{Ideal triangles} 
Given three distinct points on the boundary of the hyperbolic plane,  an \emph{ideal triangle} having these three points as vertices is the closed subset of the hyperbolic plane bounded by the three geodesics that join pairwise these three points at  infinity. Alternatively, one may define the ideal triangle as the convex hull of the three distinct points at infinity.

Any two ideal triangles are isometric. This  can be deduced from the fact that the isometry group of the hyperbolic plane acts triply transitively on the boundary of this space, but it can also be deduced from more basic principles, as we now explain.

 We start with the classical fact that in hyperbolic geometry the isometry type of a triangle is determined by its three angles (see e.g. Lobachevsky's treatise, in which he gives a formula for the side of a hyperbolic triangle in terms of the three angles \cite[p. 29]{Lobachevsky}). We may extend this result (by continuity) to ideal triangles, as follows. The three angles of an ideal triangle are zero. (One may see this in the upper half-plane model of the hyperbolic plane where the geodesics are circles perpendicular to the boundary. When two such geodesics meet at the same point at the boundary they make a zero angle there.) We deduce from these two facts that any two ideal triangles, since they have equal angles, are isometric.

One of the themes with which we shall be acquainted in these notes is that ideal triangles are simpler to deal with than the usual triangles. Ideal triangles are extensively used in the deformation theory of hyperbolic surfaces developed by Thurston in the paper \cite{Thurston1986}. In particular, it is very easy to construct Lipschitz maps between ideal triangles, and this is a basic tool in Thurston's metric theory of Teichm\"uller space.

We mention finally that ideal triangles are objects in hyperbolic geometry that have no analogues in the two other geometries of constant curvature (Euclidean and spherical).

\subsection{Area}

The area of a triangle in the hyperbolic plane is equal to its \emph{angle deficit}, that is, the deficit to $\pi$ of the sum of its three angles. In other words, if $A,B,C$ are the three angles of a hyperbolic triangle, then its area is equal to $\pi-(A+B+C)$.  This can be seen as a consequence of the Gauss--Bonnet theorem. We recall that this theorem applies to any geodesic triangle $\Delta$ on an arbitrary differentiable surface $F$ equipped with Riemannian metric. Here, a geodesic triangle on $F$ is a simply-connected subset of $F$ which is bounded by three geodesic segments. The theorem says that if these three segments make among themselves interior angles $A,B,C$, then we have 
\[\int\int_{\Delta}Kd\sigma = A+B+C-\pi,\]
where $K$ is the Gaussian curvature and $d\sigma$ is the area element on $F$. The hyperbolic plane is a surface equipped with a Riemannian metric of constant Gaussian curvature $=-1$. Thus, in this setting, the above formula holds with $K=-1$ and the integral on the left hand side in the above equation is the area of the triangle, with a  negative sign. This proves the desired area formula.

There is a more intuitive and elementary view on the formula for the area of a triangle as its angle deficit, which we give now.

Let us start by proving the following:
\begin{proposition}
In the hyperbolic plane, the angle deficit of a triangle is always positive.
\end{proposition}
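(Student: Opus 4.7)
The plan is to pass to the conformal disc model and compare the hyperbolic triangle with the Euclidean triangle that has the same three vertices; this gives an elementary proof that avoids invoking Gauss--Bonnet and rests only on the model's conformality plus one Euclidean convexity observation.

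First I would apply an isometry of $\mathbb{H}^2$ to bring one vertex, say $P_1$, to the center $O$ of the Poincar\'e disc $D$. With this normalization, the two sides through $P_1$ are diameters of $D$, hence Euclidean straight segments, while the third side $\overline{P_2P_3}$ is an arc of a Euclidean circle $C$ orthogonal to $\partial D$. The orthogonality condition forces $|OO_C|^2 = 1 + r_C^2$, where $O_C$ and $r_C$ are the Euclidean center and radius of $C$, so $O_C$ lies strictly outside the closed unit disc. From this I would deduce, by an elementary Euclidean check, that $O$ and $O_C$ lie on opposite sides of the chord $\overline{P_2P_3}$, so that the hyperbolic arc (which is the portion of $C$ on the side of the chord opposite $O_C$) bulges toward $O = P_1$.

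With that picture in hand the angle comparison is immediate. By conformality, the hyperbolic angle at any vertex coincides with the Euclidean angle between the tangent directions of the incident sides. At $P_1$, both incident sides are straight Euclidean segments, so the hyperbolic angle equals the angle of the Euclidean triangle $P_1P_2P_3$ at $P_1$. At $P_2$ (and symmetrically at $P_3$), the tangent to the arc is rotated from the chord direction toward the ray $\overline{P_2P_1}$, so the hyperbolic angle at $P_2$ is strictly smaller than the Euclidean angle of $P_1P_2P_3$ at $P_2$. Summing the three comparisons, the hyperbolic angle sum is strictly less than the Euclidean angle sum $\pi$, which is exactly the claim.

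The only step that asks for real work is the convexity observation: verifying that the geodesic arc bulges toward $P_1$ and not away from it. The conceptual content is clear, since the arc curves around its Euclidean center $O_C$ which has been exiled outside $D$, but turning this into a short Euclidean argument --- either by separating $O$ from $O_C$ with the chord, or by comparing the initial tangent of the arc with that of the chord at $P_2$ --- is the one place I expect a small computation. After that, the angle inequalities and the summation are essentially free.
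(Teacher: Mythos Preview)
Your argument is correct and takes a genuinely different route from the paper's. The paper argues by contradiction, model-free: observing that some triangle (near an ideal one) has angle sum $<\pi$, it supposes another has sum $\ge\pi$, finds by continuity one with sum exactly $\pi$, and tiles the hyperbolic plane by copies of it in the pattern of the standard Euclidean triangular tiling; this would produce equidistant geodesics, contradicting hyperbolic geometry. You instead work in the disc model, normalize one vertex to the centre, and compare directly with the Euclidean triangle on the same three vertices via conformality. The paper's approach is synthetic and ties the result to the failure of the parallel postulate, matching the paper's stated preference for model-free reasoning, but it imports the nontrivial fact that hyperbolic geodesics cannot be equidistant. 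Your approach is self-contained and explicit, resting only on elementary circle geometry, at the cost of committing to a model. One small point to watch when you write up the angle comparison at $P_2$: the bulging only places the arc's tangent on the $P_1$-side of the chord; to be sure it does not overshoot the ray $P_2P_1$, note also that the open arc stays on the $P_3$-side of the line $OP_2$ (two hyperbolic geodesics meet at most once, and $OP_2$ is a genuine secant of $C$ since $|OP_2|<1$ forbids tangency), which pins the tangent strictly inside the Euclidean angle $\angle P_1P_2P_3$.
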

\begin{proof}
 We must prove that the sum of the three angles in any triangle is $<\pi$. The argument is by contradiction. We start with the fact that there exists, in the hyperbolic plane, a triangle whose angle sum is $<\pi$. For this, it suffices to consider an ideal triangle (whose three angles are zero), or, if one prefers non-ideal triangle, we can take a triangle whose vertices are near infinity, close to those of an ideal triangle (then its three angles will be close to zero). Assume now that there exists a triangle in the hyperbolic plane whose angle sum is $\geq \pi$. Then, by continuously moving the three vertices of one of the two triangles that we have towards those of the other, we can find a triangle whose angle sum is equal to $\pi$. By taking a countable number of copies of such a triangle, we make a tiling of the plane which is combinatorially modeled on the tiling of the Euclidean plane by isometric triangles represented in Figure \ref{fig:tiling}.
\begin{figure}[htbp]
\centering
\includegraphics[width=0.5\linewidth]{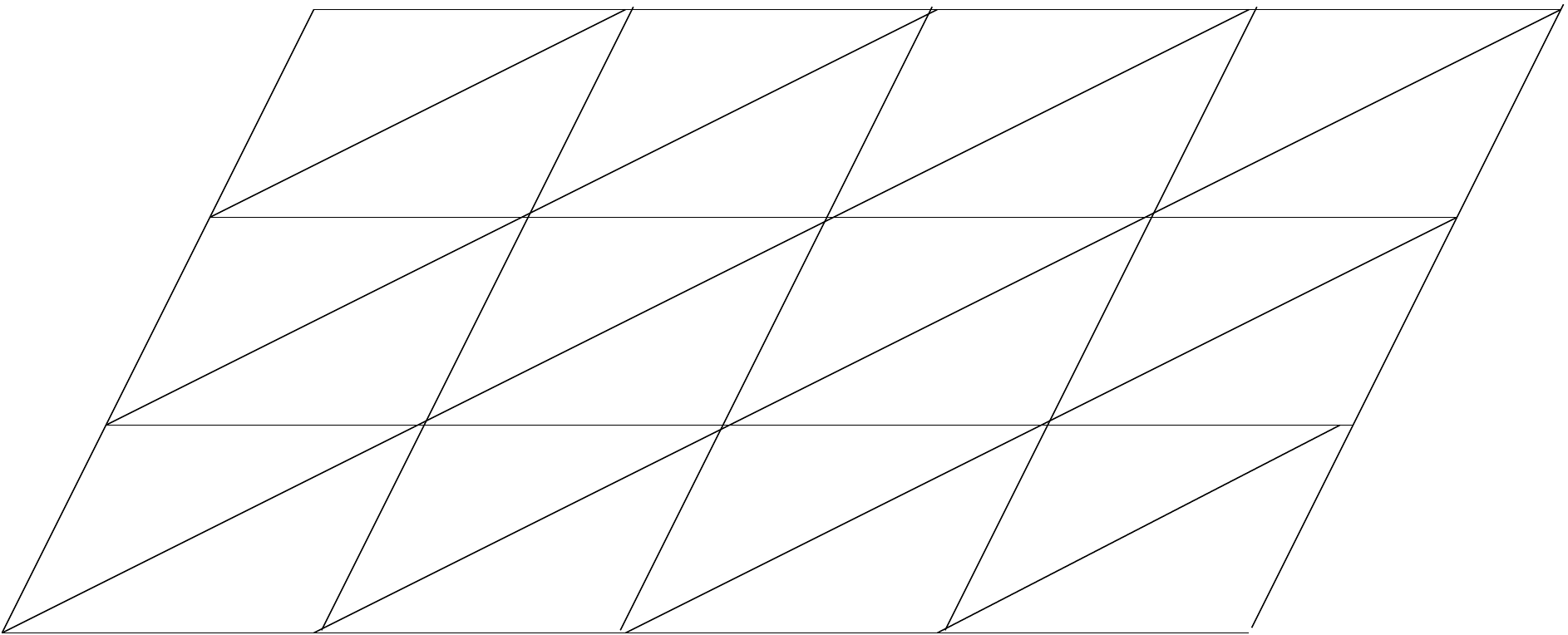}    \caption{\small {A tiling of the Euclidean plane by isometric triangles}}   \label{fig:tiling}  
\end{figure}
 From the assumption on the angle sum, the total sum of the angles around each vertex is equal to two right angles, so this combinatorial tiling gives a genuine tiling of the hyperbolic plane. But the existence of such a tiling of the hyperbolic plane is impossible. The reason is that it would give us two equidistant geodesics in the hyperbolic plane, contradicting one of the fundamental principles of hyperbolic geometry. (It is known that the existence of equidistant geodesics in Euclidean geometry is equivalent to the parallel axiom, but hyperbolic geometry is precisely Euclidean geometry modified so that the parallel axiom is replaced by its negation.)

We conclude that the angle deficit of any triangle in the hyperbolic plane is strictly positive. 
\end{proof}

Now we can use this fact to define a notion of area. The argument that we give is a variation on an argument contained in Lambert's treatise \cite{Lambert-Blanchard}. 

First of all, we have to agree on the definition of an area function: this is a function defined on figures, which is additive under the operation of taking disjoint unions, or, equivalently, under the operation of subdivision. The main examples of figures are polygons, which can be decomposed into triangles. More general figures are obtained as limits of polygons. 

With this in mind, we see that to show that angle deficit is a good notion of  area it is sufficient to show the following
\begin{proposition}
In the hyperbolic plane, angle deficit of triangles is invariant under subdivision of triangles. 
\end{proposition}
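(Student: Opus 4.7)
The plan is to reduce the general case to a single elementary subdivision step. First I would fix a triangle $T$ with vertices $A,B,C$ and corresponding angles $\alpha,\beta,\gamma$, choose a point $D$ in the relative interior of the side $BC$, and join $D$ to $A$ by a geodesic segment. This splits $T$ into two sub-triangles $T_1 = ABD$ and $T_2 = ADC$. The two angles at $D$ are supplementary, so they sum to $\pi$; the two angles at $A$ sum to $\alpha$; and the angles at $B$ and $C$ are unchanged. Hence the total of the six angles of $T_1\cup T_2$ equals $\alpha+\beta+\gamma+\pi$, and consequently
\[
(\pi - \mathrm{angles}(T_1)) + (\pi - \mathrm{angles}(T_2)) \;=\; 2\pi - (\alpha+\beta+\gamma) - \pi \;=\; \pi - (\alpha+\beta+\gamma),
\]
which is exactly the angle deficit of $T$. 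This is the basic identity.

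Next I would explain how an arbitrary subdivision reduces to iterating this move. By a subdivision I mean a triangulation of $T$ by finitely many geodesic sub-triangles meeting edge-to-edge. Such a triangulation can be built up one triangle at a time: each new sub-triangle is obtained either by choosing a point on an existing edge and joining it to an opposite vertex (this is precisely the basic move above), or by choosing a point in the interior of an existing sub-triangle and joining it to its three vertices (the basic move applied twice in succession). Since the basic move preserves the total angle deficit, so does any finite sequence of basic moves, and the additivity of angle deficit follows by induction on the number of pieces.

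As a complementary check, one can do the bookkeeping globally. If the triangulation has $n$ sub-triangles, $V_i$ interior vertices and $V_b$ non-corner boundary vertices, then the angles around each interior vertex sum to $2\pi$ and around each non-corner boundary vertex to $\pi$, so the total of all angles of all sub-triangles equals $\alpha+\beta+\gamma+\pi V_b+2\pi V_i$. Euler's formula $V-E+F=1$ for a disk, combined with the edge-incidence count $3n = 2E_{\mathrm{int}} + E_{\mathrm{bdy}}$, forces $n = 1 + V_b + 2V_i$, and the identity $\sum_i \mathrm{deficit}(T_i) = \mathrm{deficit}(T)$ drops out immediately. I expect the only real obstacle to be notational: one has to fix precisely what counts as a subdivision (an edge-to-edge triangulation, rather than an arbitrary dissection where a new vertex could land on an existing edge without being joined to both endpoints) so that the induction goes through cleanly. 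Beyond that, everything is elementary angle chasing and requires no further input from hyperbolic geometry than the supplementarity of the angles on opposite sides of the cutting geodesic at $D$.
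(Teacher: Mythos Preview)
Your proof is correct and its core step---splitting the triangle by a geodesic from a vertex to a point on the opposite side, then using the supplementarity of the two angles at that point---is exactly the argument the paper gives. The paper stops after this single subdivision and simply asserts that ``we can subdivide the triangle in various ways and we get the same result,'' whereas you go further and supply both an inductive reduction and a global Euler-characteristic count to handle arbitrary triangulations; this extra care is sound and makes the invariance under general subdivisions explicit rather than implicit.
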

\begin{proof}
Let us take an arbitrary triangle  in the hyperbolic plane and let us subdivide it as in Figure \ref{fig:addition} into two smaller triangles with angles $A,E,F$ and $B,C,D$.  The angle deficits of the two smaller triangles are $\pi-(A+E+F)$ and $\pi-(B+C+D)$.  Adding these two quantities and using the fact that $D+E=\pi$, we find $2\pi-(A+E+F+B+C+D)= \pi-(A+B+C+F)$, which is the angle deficit of the large triangle. We can subdivide the triangle in various ways and we get the same result. Thus, angle deficit in hyperbolic geometry is additive on triangles, which is what we wanted to prove.
     \end{proof}
     In fact, with a little bit of care regarding the axioms that an area function should satisfy, one can prove that in hyperbolic geometry the only reasonable area function is, up to a multiple constant, the one that assigns to each triangle its angle deficit.

\begin{figure}[htbp]
\centering
 \psfrag{A}{\small $A$}
  \psfrag{B}{\small $B$}
 \psfrag{C}{\small $C$}
\psfrag{D}{\small $D$}
  \psfrag{E}{\small $E$}
 \psfrag{F}{\small $F$}
\includegraphics[width=0.4\linewidth]{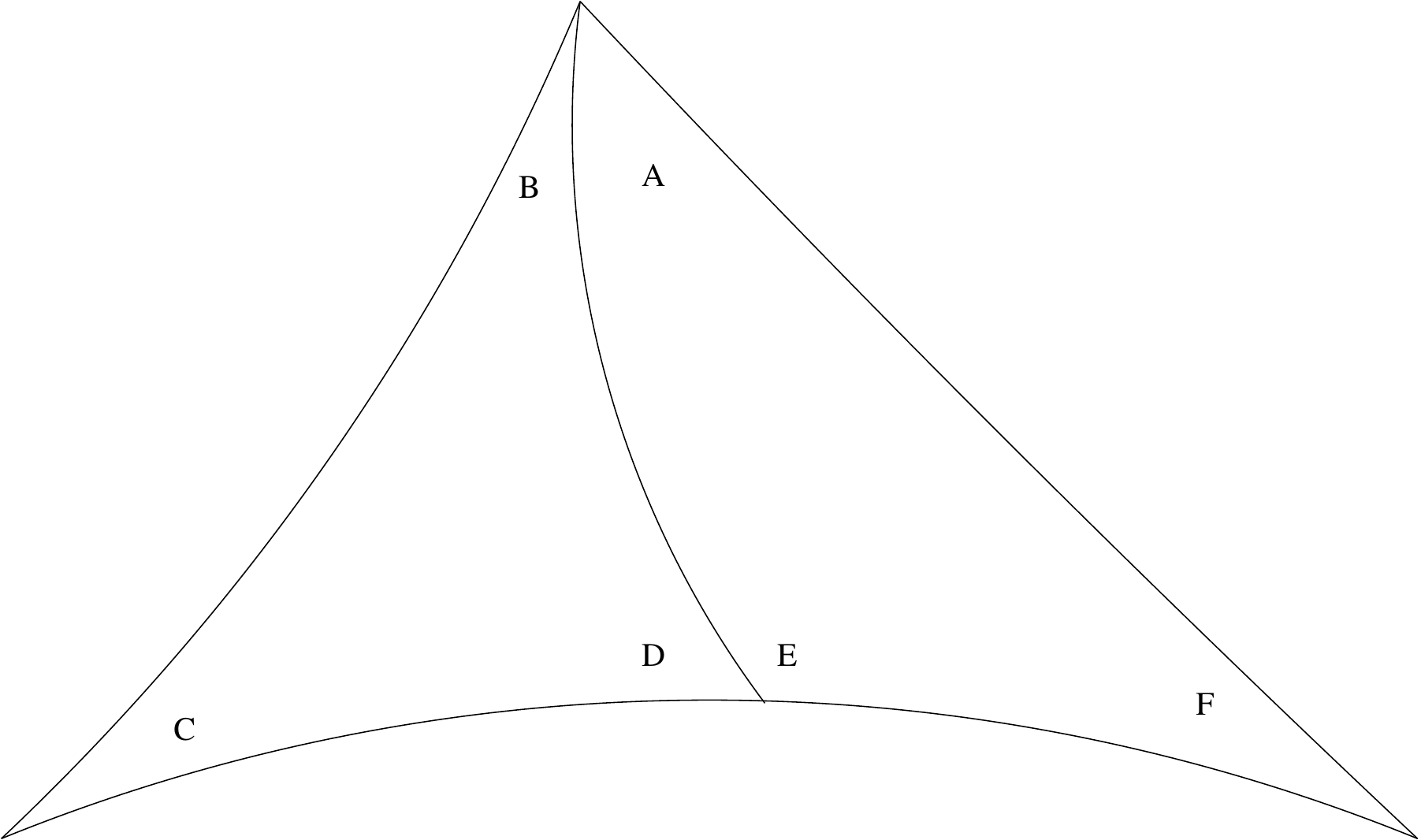}    \caption{\small {A triangle subdivided into two triangles}}   \label{fig:addition}  
\end{figure}

We can apply the preceding result to the area of ideal triangles: since  the three angles of an ideal triangle are all zero, its angle deficit, i.e. its area, is equal to $\pi$.

 \subsection{Horocycles} \label{ss:horocycles} A horocycle in the hyperbolic plane is a limit of circles in the following sense: Consider a geodesic ray $r:[0,\infty)\to \mathbb{H}^2$ starting at a point $P=r(0)$. On this geodesic ray, for every $t>0$, consider the circles $C_t$ centered at $r(t)$ and passing through $P$. As $t\to\infty$, the family of circles $C_t$ converge, in the Hausdorff topology of the space of closed subsets of the compactified hyperbolic plane $\overline{\mathbb{H}^2}$, to a subset $\mathcal{H}$ homeomorphic to a circle, which is called a \emph{horocycle}. 
 
 We note incidentally that horocyles were already singled out by Lobachevsky, see \cite[p. 7]{Lobachevsky}, who called them limit circles and used them extensively in his work.

 The horocycle $\mathcal{H}$ obtained in the above construction intersects the boundary $\partial\mathbb{H}^2$ in a single point called its \emph{center}, which is the limit of the family of centers of the circles $C_t$. It coincides with the limit point $r(\infty)$ of the geodesic ray $r$. By varying the point $P$ on a geodesic $\gamma:(-\infty,\infty)\to\mathbb{H}^2$ and making the above construction with the family of all sub-geodesic rays of $\gamma$ pointing in the same direction, we obtain a foliation of $\mathbb{H}^2$ by geodesics centered at the point $\gamma(\infty)$ (see Figure \ref{fig:horo} below).

A \emph{horodisc} is obtained using an analogous construction, as the limit of discs $D_t$ centered at $r(t)$ and passing through $r(0)$, instead of the limit of circles $C_t$. The discs $D_t$ in the compactified hyperbolic plane are the filled-in circles $C_t$, an the horodiscs are the filled-in horocycles.

The horocycle $\mathcal{H}$ meets perpendicularly the geodesic ray $r(t)$ that was used to define it. 
 In the upper half-plane model of the hyperbolic plane, the horocycles are either the Euclidean circles that are tangent to the real axis $y=0$ (and their center is this point of tangency) or the Euclidean lines of equation $y=C$ with $C>0$, that is, the Euclidean lines that are parallel to the real axis (and their center is the point $\infty$ in this model).  
 
Using the formula that we recalled in \S \ref{ss:hyp} for the length element in the upper half-plane model, it is easy to compute the length of a piece of horocycle contained in a horizontal line at height $y$ in that plane, joining the two points of coordinates $(a,y)$ and $(b,y)$ with $a<b$. This length is:
\[\int_a^b \sqrt{\frac{dx^2+dy^2}{y^2}}=\int_a^b\frac{dx}{y}=\frac{b-a}{y}.\]
This formula is useful in the computation of the dilatation constant of Thurston's stretch maps between ideal triangles and the resulting stretch lines in Teichm\"uller space. We shall consider stretch lines in \S \ref{s:metric} and \S \ref{s:stretch} below.

\subsection{Geodesic laminations}\label{ss:laminations}
A geodesic lamination on $S$ is a closed subset of this surface which is the union of disjoint simple geodesics. These geodesics are called the \emph{leaves} of $\mu$. They can be either closed geodesics or bi-infinite geodesics.

 Thurston showed the importance of geodesic laminations in the deformation theory of hyperbolic surfaces, in Teichm\"uller theory and in the theory of hyperbolic 3-manifolds.  In his Princeton lecture notes, he introduced the set $\mathcal{GL}$ of geodesic laminations on a hyperbolic surface. He considered two topologies on this space, namely, the Hausdorff topology and the geometric topology (See \cite[Chapter 8, \S 8.1]{Thurston-Princeton}).  The latter is also called the Thurston topology. Besides Thurston's original notes, references on geodesic laminations include the books \cite{CB} and \cite{CEG}.

 In what follows, we shall use individual geodesic laminations, and we shall not deal with the space $\mathcal{GL}$ of geodesic laminations, but we shall work with a space of geodesic laminations with transverse measure, namely, the space of measured geodesic laminations. This is the subject of the next subsection.

\subsection{Measured geodesic laminations}\label{ss:laminations}
 
 A \emph{measured geodesic lamination} is a geodesic lamination equipped with a transverse measure, that is, a measure on transversals (arcs that are transverse to the leaves of this lamination). This measure is assumed to be finite on compact transversals and invariant by the local holonomy maps, that is, the isotopies (continuous motions) of the transversals that keep each point on the same leaf. We also ask that the support of the measure on each transversal coincides with the intersection of this arc with the lamination. The underlying geodesic lamination,  without its transverse measure, is said to be the \emph{support} of the measured geodesic lamination.

A measured geodesic lamination on $S$ is said to be \emph{compactly supported} if its support is compact. In particular, such a measured geodesic  lamination cannot contain infinite leaves with an end converging to a cusp.  

We denote by $\mathcal{ML}$ the set of compactly supported measured geodesic laminations on $S$.  This space is equipped with a natural topology obtained by embedding it into the space of nonnegative functions on $\mathcal{S}$. The embedding is obtained by associating to each element $\mu$ of  $\mathcal{ML}$ the intersection function $\gamma\mapsto i(\mu,\gamma)$ 
where  $i(\mu,\gamma)$ denotes the total transverse measure relative to $\mu$  of the unique geodesic representative of $\gamma$ with respect to the underlying hyperbolic structure. The transverse measure is understood to be equal to zero if the geodesic is not transverse to $\mu$ (that is, if it is a leaf of $\mu$). 
This embedding of $\mathcal{ML}$ into the function space $\mathbb{R}_{\geq 0}^{\mathcal{S}}$ equipped with the product topology induces a topology on $\mathcal{ML}$ which we call the
 \emph{measure topology}.

A simple closed geodesic $\gamma$ on a hyperbolic surface is considered to be a measured geodesic lamination equipped with the counting measure, that is,  the measure which assigns to each transverse arc the number of intersection points of that arc with $\gamma$. In this way, we have a canonical injection $\mathcal{S}\subset \mathcal{ML}$. This injection can be extended in an obvious way to an injection of the set of positively weighted simple closed curves into the space of geodesic measured laminations: 
\[\iota:\mathbb{R}_+\times \mathcal{S}\to \mathcal{ML}.\]
We have the following
\begin{proposition}\label{prop:extension}
The image of the map $\iota$ is dense in $\mathcal{ML}$.
\end{proposition}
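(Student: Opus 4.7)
The plan is to approximate an arbitrary $\mu \in \mathcal{ML}$ by weighted single simple closed curves via train track theory. Since the measure topology on $\mathcal{ML}$ is induced by the embedding into $\mathbb{R}_{\geq 0}^{\mathcal{S}}$, it suffices to show that, for any $\mu \in \mathcal{ML}$, any finite collection $\alpha_1,\ldots,\alpha_n \in \mathcal{S}$ and any $\varepsilon>0$, there exist $t>0$ and $\gamma \in \mathcal{S}$ satisfying $|t\,i(\gamma,\alpha_j) - i(\mu,\alpha_j)| < \varepsilon$ for $j=1,\ldots,n$.

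First, I would realize $\mu$ concretely on a train track. Every compactly supported measured geodesic lamination is carried by some train track $\tau$ on $S$, and the transverse measure produces a positive weight function $w\colon E(\tau) \to \mathbb{R}_{>0}$ on the branches satisfying the switch conditions. The set $V(\tau)$ of weight functions satisfying these relations is a rational convex polyhedral cone in $\mathbb{R}^{E(\tau)}$, and for every $\alpha \in \mathcal{S}$ the pairing $w\mapsto i(w,\alpha)$ extends continuously and piecewise linearly to $V(\tau)$. Since $V(\tau)$ is cut out by equations with rational coefficients, its rational points are dense; approximating $w$ by $w' \in V(\tau)\cap \mathbb{Q}^{E(\tau)}$ and clearing a common denominator $N$ yields an integer weight $Nw'$, which is precisely the transverse count of an integrally weighted multicurve $M$ carried by $\tau$. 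Hence rescaled multicurves are dense in $\mathcal{ML}$ by continuity of the intersection pairing.

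The main remaining step, and the one I expect to be the principal obstacle, is to pass from weighted multicurves to \emph{single} weighted simple closed curves. Given $M = \sum_{i=1}^{k} b_i c_i$ with disjoint simple components $c_i$, I would construct for each large integer $m$ a single simple closed curve $\gamma_m$ by taking $m b_i$ parallel copies of each $c_i$ and band-summing them together along short connecting arcs placed inside a fixed small disc $D\subset S$ disjoint from $\alpha_1,\ldots,\alpha_n$. One then obtains $i(\gamma_m,\alpha_j) = m\sum_i b_i\,i(c_i,\alpha_j) + O(1)$, with an error bounded independently of $m$ by the number of connecting arcs times their maximal intersection with the $\alpha_j$, so that $(1/m)\gamma_m$ approximates $M$ in the measure topology. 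The two delicate points in this last step are (i) arranging the band-sum so that the result is a single embedded simple closed curve rather than several components, which requires routing the connecting arcs according to an Eulerian circuit in an auxiliary graph on the components, and (ii) ensuring that the connecting arcs have bounded intersection with the reference curves $\alpha_j$, which is automatic once $D$ is chosen first and kept fixed. An alternative route that bypasses the multicurve intermediate step is to close up a long segment of a generic recurrent leaf of $\mu$ by a short transversal arc and then invoke an ergodic-type equidistribution of intersections along long leaf segments, but the train-track approach above is more combinatorial and seems more transparent.
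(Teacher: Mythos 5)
The paper itself gives no proof of this proposition --- it simply cites \cite{FLP} --- so your argument has to be judged on its own terms. Its first half is correct and standard: carrying $\mu$ on a train track $\tau$, approximating the weight vector inside the rational polyhedral cone $V(\tau)$ by rational points, and clearing denominators to get integrally weighted multicurves dense in $\mathcal{ML}$. The gap is in the passage from weighted multicurves to weighted \emph{single} simple closed curves, which you rightly flag as the principal obstacle but do not actually overcome. The band sum of two parallel copies of an essential curve $c$ along a short arc $\beta$ contained in the annulus $A$ they cobound is precisely the boundary of the disc $A\setminus N(\beta)$, hence null-homotopic; more generally, chaining $mb_i$ parallel copies of $c_i$ by short bands inside a fixed disc $D$ yields a curve freely homotopic to at most one copy of $c_i$, because the copies cancel in pairs. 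Consequently the claimed estimate $i(\gamma_m,\alpha_j)=m\sum_i b_i\,i(c_i,\alpha_j)+O(1)$ fails: putting the bands in a disc disjoint from the $\alpha_j$ gives the upper bound $i(\gamma_m,\alpha_j)\le m\sum_i b_i\,i(c_i,\alpha_j)$ for free, but the matching lower bound would require the constructed representative to be essentially in minimal position, and it is very far from it. The Eulerian-circuit routing addresses only connectedness, not this homotopic cancellation; note also that an embedded curve is primitive or trivial in $H_1(S;\mathbb{Z})$, so it cannot coherently carry $mb_i$ oriented copies of $c_i$ in the way your intersection count presumes.

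A standard repair keeps your first half and replaces the band sums by Dehn twists. Choose a simple closed curve $\delta$ with $i(\delta,c_i)>0$ for every component $c_i$ of the multicurve $M=\sum_i b_ic_i$, and set $\gamma_n=T_{c_1}^{n_1}\cdots T_{c_k}^{n_k}(\delta)$ with $n_i=\lfloor n\,b_i/i(c_i,\delta)\rfloor$; the twists commute since the $c_i$ are disjoint. The classical inequality
$\bigl|\,i\bigl(T_{c_1}^{n_1}\cdots T_{c_k}^{n_k}(\delta),\alpha\bigr)-\sum_i n_i\,i(c_i,\delta)\,i(c_i,\alpha)\,\bigr|\le C(\delta,\alpha)$, with $C$ independent of the $n_i$, then gives $\frac{1}{n}\gamma_n\to M$ in $\mathcal{ML}$, completing the proof. (Your alternative suggestion of closing up a long segment of a leaf of a minimal component can also be made to work, but it needs its own equidistribution argument.) In short: the train-track half stands, but the connectivity step needs a mechanism genuinely different from short-band summation.
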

The proof is given in \cite[Corollary 4.5]{FLP} in the setting of measured foliations instead of measured laminations which amounts to the same; see the exposition of measured foliations in the next section.

We note finally that we also have a quotient map
\[\mathcal{S}\to \mathcal{PML}\]
which has a dense image.

\subsection{Measured foliations}\label{ss:foliations}

A \emph{measured foliation} on $S$ is a foliation with isolated singularities of the type represented in Figure \ref{fig:prongs} (that is, $n$-prong singularities, where $n$ can be any integer $\geq 3$) such that each arc transverse to the foliation is equipped with a measure equivalent to a Lebesgue measure of a segment of the real line and such that these transverse measures on the various arcs are invariant by the local holonomy maps, that is, isotopies of the transverse arcs that keep each point on the same leaf.

\begin{figure}[htbp]

\centering

\includegraphics[width=9cm]{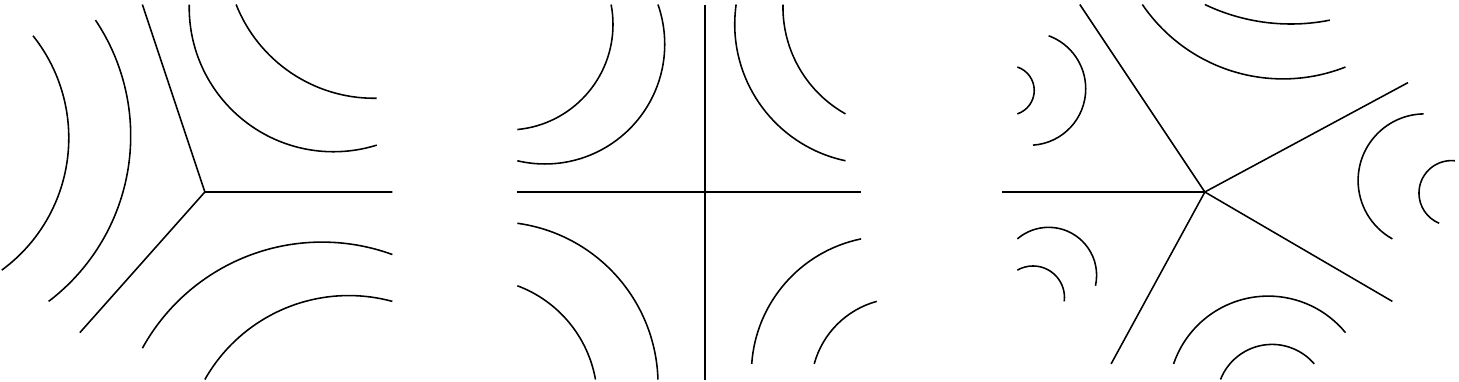}

\caption{\small{Singularities of measured foliations with 3, 4 and 5 prongs}}
\label{fig:prongs}
\end{figure}

There is an equivalence relation on the space of measured foliations  generated by isotopy and Whitehead moves. Here a \emph{Whitehead move} is an operation that consists in collapsing to a point a leaf connecting two singular points, or the inverse operation. Such an operation between measured foliations is well-defined up to isotopy. An example of a Whitehead move is given in Figure \ref{fig:Whitehead}.

\begin{figure}[htbp]
\centering
\includegraphics[width=0.9\linewidth]{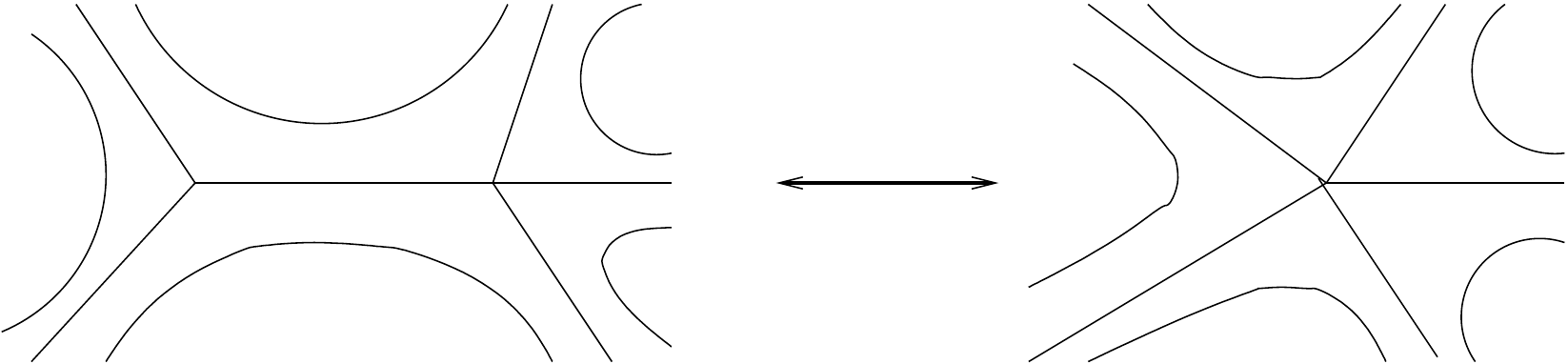}    \caption{\small {A Whitehead move}: the foliation on the right hand side is obtained from the one on the left hand side by collapsing a leaf connecting two singular points}   \label{fig:Whitehead}  
\end{figure}

A measured foliation $F$ on $S$ is said to be \emph{compactly supported} or \emph{trivial around the punctures} if each puncture of $S$ has a neighborhood homeomorphic to a cylinder on which the restriction of $F$ is a foliation by homotopic  closed leaves. We also require that the measure of any arc converging to the cusp is infinite.

The set of equivalence classes of measured foliations on $S$ is called \emph{measured foliation space} and it is denoted by $\mathcal{MF}$. All measured foliations on $S$ will be assumed to be compactly supported (trivial around the punctures). The space $\mathcal{MF}$ is equipped with a topology which we shall recall below. When the surface $S$ is equipped with a hyperbolic structure, there is a natural homeomorphism between the space $\mathcal{MF}$ of measured foliations and the space $\mathcal{ML}$ of measured geodesic laminations on $S$. The passage from a foliation to a lamination is done by replacing each leaf of a foliation by the geodesic that is homotopic to it. In the case of bi-infinite leaves, one asks that this correspondence between leaves preserves the endpoints. Making this operation precise is best seen in the universal cover of the two surfaces, both identified with the hyperbolic plane. To do this, one first shows that in the lift of the measured foliation to the universal cover,  each leaf converges in its two directions to distinct points on the boundary of the hyperbolic plane. Then,  one replaces the bi-infinite leaf with the geodesic joining the two corresponding points at infinity. This operation is done in an equivariant manner. In this way, each leaf of the measured foliation on the surface is replaced by a geodesic. The operation is called ``straightening" the foliation. The straightened foliation is a lamination. The transport of the transverse measure of the foliation to a  transverse measure of the lamination needs more work, and it involves the introduction, using the transverse measure of the foliation, of a measure on the space $\mathcal{B}= (\partial \mathbb{H}^2 \times \partial \mathbb{H}^2)\setminus \Delta$ where $\Delta$ is the diagonal of that space (that is, the subset of pairs of the form $(x,x)$ with $x$ in  $\partial \mathbb{H}^2$) and then inducing from that measure on the space $\mathcal{B}$ a transverse measure for the geodesic lamination. The measure on $\mathcal{B}$ induced by a measured foliation or a measured lamination on a surface of finite type is essentially obtained as follows: One defines the measure of each rectangular box $I\times J$, where $I$ and $J$ are arbitrary disjoint subsets of $\partial \mathbb{H}^2$, to be equal to the total transverse measure of a segment in $\mathbb{H}^2$ that is transverse to the lift of the foliation (or lamination), which intersects in a single point every leaf of this foliation which has one endpoint in $I$ and another endpoint in $J$ and which intersects no other leaves. This definition of the measures of rectangles of $\mathcal{B}$ is used to define a measure on this space. The passage between the transverse measure of a measured foliation ans the transverse measure of the associated geodesic lamination is done by using this measure induced on the space $\mathcal{B}$ of endpoints of leaves.
 We refer the reader to the paper \cite{Levitt} for the details on the passage between foliations and laminations (without, however, the discussion on the transport of the transverse measures).

 The name ``compactly supported" for a measured foliation that we introduced above is chosen because when the surface $S$ is equipped with a hyperbolic structure, the correspondence between measured foliations and measured geodesic laminations establishes a correspondence between compactly supported measured foliations and compactly supported measured geodesic laminations on $S$.

A measured foliation,  in the same way as a measured geodesic lamination, defines a function (also called intersection function) on the set $\mathcal{S}$ of homotopy classes of essential simple closed curves on $\mathcal{S}$. In fact, the intersection function associated with a measured foliation is the one defined by the measured geodesic lamination  which represents it, relative to any choice of a hyperbolic metric on $\mathcal{S}$. But this intersection function associated with a measured foliation $F$ can also be defined independently of the passage to geodesic laminations, and we recall the definition.

Let $\alpha$ be an element in $\mathcal{S}$. We represent it by a closed curve which is a concatenation of arcs that are either transverse to the leaves of $F$ or contained in leaves of this foliation. We call $\alpha'$ this representative. We define the total transverse measure of $\alpha'$ with respect to $F$, which we denote by $I(F,\alpha')$, as the sum of the transverse measures of all the subarcs of $\alpha'$ that are transverse to $F$. We then define the intersection number $i(F,\alpha)$ as the infimum of the quantities $I(F,\alpha')$ over all closed curves $\alpha'$ which are in the homotopy class $\alpha$. The collection of intersection functions $i(F,\cdot)$ associated with the various measured foliations $F$ defines an embedding of $\mathcal{MF}$ in the space $\mathbb{R}_{\geq 0}^{\mathcal{S}}$ of nonnegative functions on $\mathcal{S}$. This embedding induces a topology on the space $\mathcal{MF}$ from the weak topology on the function space  $\mathbb{R}_{\geq 0}^{\mathcal{S}}$, in the same way the embedding of the space $\mathcal{ML}$ of geodesic laminations which we described in \S \ref{ss:laminations} induces a topology on $\mathcal{ML}$.
When the spaces $\mathcal{ML}$ of (compactly supported) measured foliations on $S$ and $\mathcal{MF}$ of (compactly supported) measured geodesic laminations on $S$ (equipped with a hyperbolic structure) are endowed with their respective topologies, the passage between measured foliations and measured geodesic laminations induces a homeomorphism between these two spaces.

We pointed out at the end of \S \ref{ss:laminations} the density of the space of positively weighted simple closed curves (or, rather, the  natural image of this space) in the space  of measured geodesic laminations of $S$. Likewise, there is a natural injection of the set of positively weighted homotopy classes of simple closed curves on $S$ into the space of measured foliations of the surface:
\[\mathbb{R}_+\times \mathcal{S}\to \mathcal{MF}.\] The map is defined as follows:

 We associate to a simple closed curve $\gamma$ equipped with a positive weight $k$ a foliated annulus $A$ on $S$ whose leaves are all closed and are in the homotopy class of $\gamma$, equipped with a transverse measure which assigns to an arc transverse to the foliation and joining the two endpoints of the annulus the measure $k$. Then, we collapse each connected component of the complement in $S$ of the foliated annulus $A$ onto a graph, called a spine of the surface with boundary, obtaining a measured foliation whose equivalence class (that is, as an element $\mathcal{MF}$) does not depend on the choices made to define it. The operation is called \emph{enlarging} the simple closed curve. Figure \ref{enlarge} represents such an operation.

\begin{figure}[htbp]
\centering
\includegraphics[width=.98\linewidth]{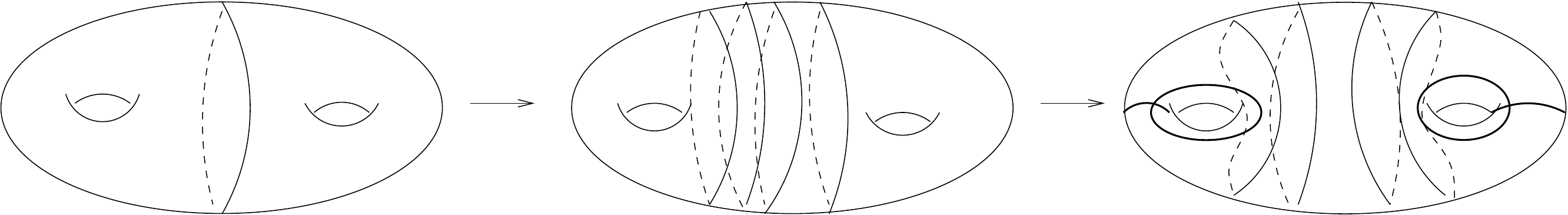}    \caption{\small {A foliation obtained by enlarging a simple closed curve. On the surface to the right, the spines of the two complementary components of the simple closed curve are represented in bold lines.}}   \label{enlarge}  
\end{figure}

The image of the map $\mathbb{R}_+\times \mathcal{S}\to \mathcal{MF}$ is dense in $\mathcal{MF}$ \cite[Corollary 4.5]{FLP}. 
The quotient map
\[\mathcal{S}\to \mathcal{PMF}\]
has also dense image.
The passage between measured foliations and measured geodesic laminations induces the identity on the natural images of the set $\mathbb{R}_+\times \mathcal{S}$ into these two spaces.

References for measured foliations include, besides Thurston's original notes \cite{Thurston-Princeton} and \cite{Thurston-FLP}, the books \cite{FLP} and \cite{PH}.

\subsection{Quasi-transverse curves}\label{ss:quasi-transverse} Given a hyperbolic structure on $S$ and a measured geodesic lamination $\mu$ on this surface, every isotopy class of essential simple closed curves $\alpha$ has a canonical representative that realizes the minimum of the intersection function with $\mu$, namely, the geodesic in the homotopy class $\alpha$. If instead of a measured geodesic lamination $\mu$ we take a measured foliation on $S$, then there is a collection of closed curves representing the isotopy class $\alpha$ that realize the minimum of the transverse measure of a curve in the class $\alpha$ with respect to $\mu$.  These representatives are the so-called quasi-transverse curves, whose theory is developed in \cite[expos\'e 5, \S 3]{FLP} and which we review now.

Let $F$ be a measured foliation on  $S$ and let $\alpha$ be again and element of $\mathcal{S}$. A closed curve $\alpha'$ in the homotopy  class $\alpha$ is said to be \emph{quasi-transverse} to $F$ if $\alpha'$ is made of a concatenation of segments that are either contained in leaves of $F$ and joining singular points, or transverse to $F$, with the additional condition that if there are two  consecutive segments of $\alpha'$ in this decomposition that meet at a singular point of $F$, among which at least one segment is transverse to $F$,  then, in the neighborhood of this singular point, the two segments are not contained in the same sector. In Figure \ref{fig:sector}, we have represented two non-allowed configurations. In Figure \ref{fig:quasi} we give an example of a piece of a quasi-transverse curve.

\begin{figure}[htbp]
\centering
\includegraphics[width=0.7\linewidth]{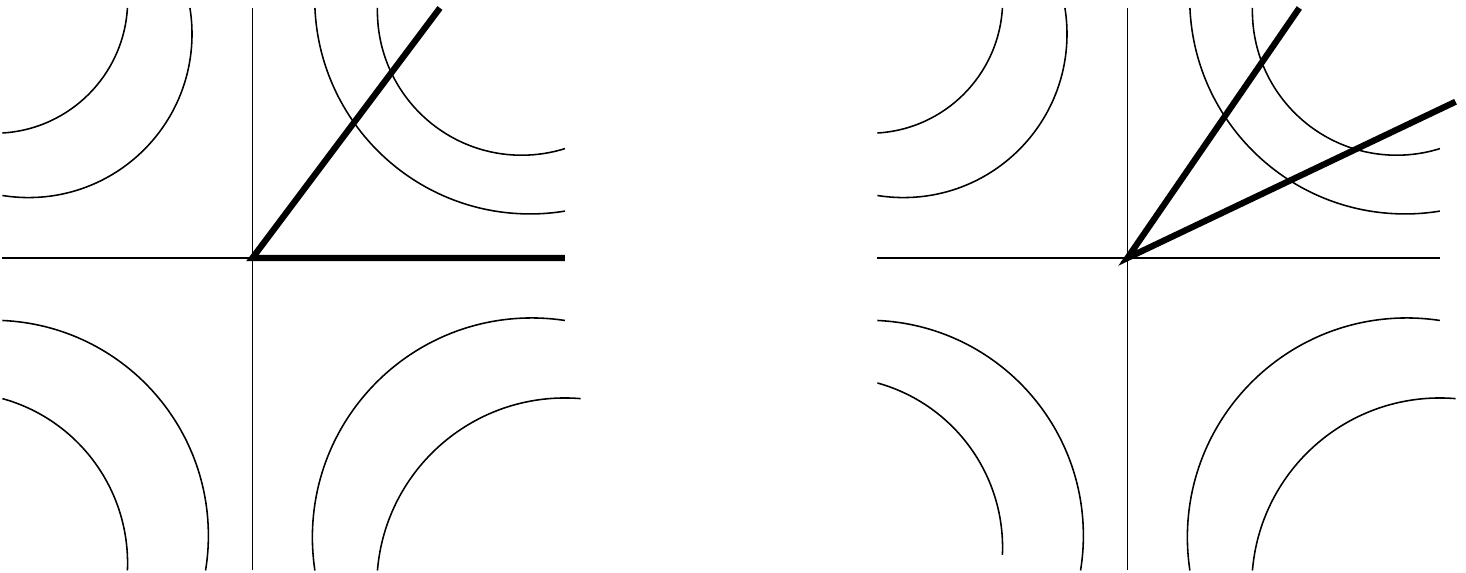}    \caption{\small {The two non-allowed configurations for a quasi-transverse curve}}   \label{fig:sector}  
\end{figure}

A quasi-transverse curve $\alpha'$ is not necessarily simple (it may have self-intersection at singular points), but it is the limit of simple closed curves.

\begin{figure}[htbp]
\centering
\includegraphics[width=0.7\linewidth]{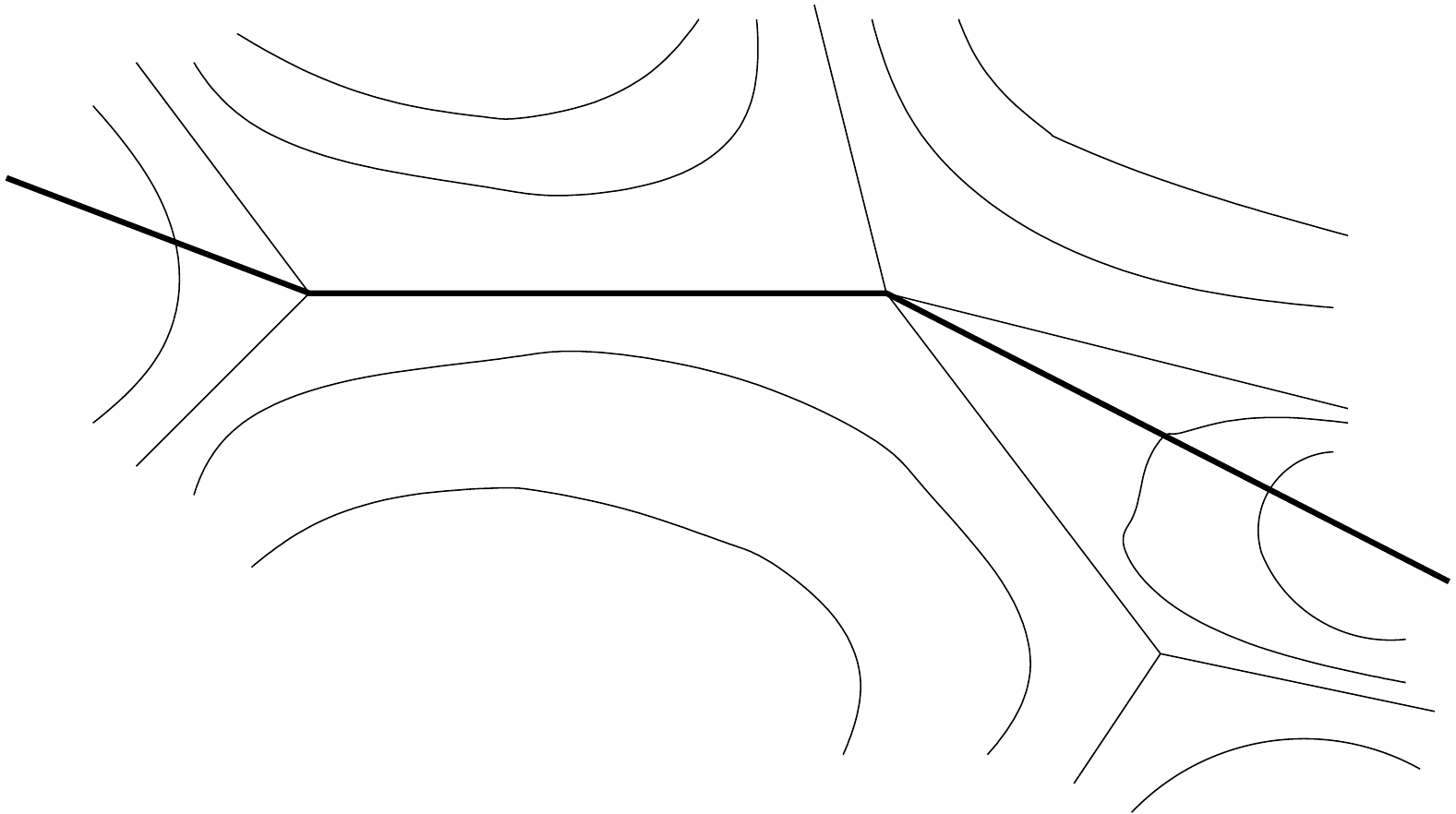}    \caption{\small {An example of a quasi-transverse curve}}   \label{fig:quasi}  
\end{figure}

The following is an important property of quasi-transverse curves (see \cite[Expos\'e 5]{FLP} for the first two parts of the statement):
\begin{proposition}
For any measured foliation $F$ and for any homotopy class of simple closed curves $\alpha$, there exists a closed curve $\alpha'$ which is quasi-transverse to $F$ and which is in the class $\alpha$. Such a representative realizes the infimum of the total intersection function with $F$, that is, $i(\alpha,F)=I(\alpha',F)$. Furthermore, any two such quasi-transverse curves that represent $\alpha$ bound a cylinder immersed in $S$ such that they can be obtained from one another by flowing along the foliation, that is, by a homotopy which leaves every point of the curve on the same leaf of the foliation induced by $F$ on that cylinder. 
\end{proposition}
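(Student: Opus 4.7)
The plan is to handle the three assertions in turn, using local modifications of closed curves combined with a lift-to-the-universal-cover argument for the global parts.

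\textbf{Existence.} I would start with an arbitrary representative $\alpha_0$ of $\alpha$ in general position with respect to $F$, meaning $\alpha_0$ is a concatenation of arcs that are either transverse to $F$ or contained in leaves, with only finitely many tangencies occurring at singular points. Define a complexity $c(\alpha_0)=(I(\alpha_0,F),N(\alpha_0))$ ordered lexicographically, where $N(\alpha_0)$ counts the number of ``forbidden corners'' at singularities (consecutive segments of $\alpha_0$ lying in the same sector at a singular point, one of which is transverse). The claim is that any $\alpha_0$ that minimizes $c$ in its homotopy class is quasi-transverse. Indeed, a forbidden configuration at a singular point $p$ admits a local homotopy, supported near $p$, that pushes the offending corner across an adjacent leaf-segment into the neighboring sector; this either strictly decreases $I(\cdot,F)$ (if a transverse subarc cancels after the push) or strictly decreases $N(\cdot)$ without increasing $I(\cdot,F)$. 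Iterating terminates, producing a quasi-transverse representative $\alpha'$.

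\textbf{The infimum property.} To show $I(\alpha',F)=i(\alpha,F)$, I would lift $\alpha'$ to an arc $\tilde{\alpha}'$ in the universal cover $\tilde S$, joining a point $\tilde x$ to its image $T\tilde x$ under the deck transformation $T$ corresponding to the free homotopy class $\alpha$. The lift of the measured foliation $\tilde F$ to $\tilde S$ has each of its leaves (or singular rays) defining a partition of $\tilde S$ into half-planes, which gives rise to a natural ``transverse measure between two points.'' The quasi-transverse condition translates in the cover into the statement that $\tilde{\alpha}'$ never backtracks across a leaf: each leaf of $\tilde F$ is crossed by $\tilde{\alpha}'$ at most in one ``direction.'' Consequently $I(\alpha',F)$ equals the total transverse measure between $\tilde x$ and $T\tilde x$, which is a lower bound for $I(\alpha'',F)$ over every $\alpha''$ in the class $\alpha$, hence equal to $i(\alpha,F)$.

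\textbf{Uniqueness up to flow along leaves.} Given two quasi-transverse representatives $\alpha'_1,\alpha'_2$ of $\alpha$, lift them to arcs $\tilde{\alpha}'_1,\tilde{\alpha}'_2$ in $\tilde S$ with the same pair of endpoints $\tilde x$ and $T\tilde x$ (after adjusting the basepoints; if the two quasi-transverse curves are disjoint this is automatic, otherwise use that both lifts end at the two fixed points of $T$ on the circle at infinity). By the infimum computation above, every leaf of $\tilde F$ that meets one of the lifts meets the other, at a point where both lifts cross the leaf in the same direction, and the two lifts together bound a strip region $R\subset\tilde S$. I would then show that $R$ is entirely saturated by leaf-arcs of $\tilde F$ joining $\tilde{\alpha}'_1$ to $\tilde{\alpha}'_2$ (any leaf entering $R$ cannot exit through either of its ``sides'' without violating quasi-transversality or producing a bigon that contradicts minimality). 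The strip $R$ is $T$-invariant, so its quotient $R/\langle T\rangle$ is an annulus foliated by leaf-arcs whose two boundary components are $\alpha'_1$ and $\alpha'_2$; its immersion into $S$ is the required cylinder.

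The main obstacle is the last step: showing that the region bounded by $\tilde{\alpha}'_1$ and $\tilde{\alpha}'_2$ is faithfully foliated by leaf-arcs running from one lift to the other. This requires a careful case analysis of singular points of $\tilde F$ that may lie inside $R$ (they can only attach to one of $\alpha'_i$ along a leaf-segment used by that quasi-transverse curve, because the other sectors would violate the quasi-transverse rule) and an argument that no leaf of $\tilde F$ can spiral or recur inside the compact strip quotient, so that the leaf space of $R$ is an interval parametrizing the sought homotopy.
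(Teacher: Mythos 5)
The paper does not actually prove this proposition; it refers the reader to \cite[Expos\'e 5, \S 3]{FLP} for the first two assertions, and your outline follows essentially that classical route (local moves to reach quasi-transversality, then a covering-space argument for minimality and for the ``flow along leaves'' statement). So the comparison is really with FLP, and at that level your plan is the right one. There are, however, two concrete gaps.

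First, in the existence step your termination argument is not well-founded as stated: the first coordinate of your lexicographic complexity, $I(\alpha_0,F)$, is a real number, and a sequence of moves each strictly decreasing it need not terminate. The standard fix is to use as complexity a nonnegative \emph{integer} (the number of tangency points of $\alpha_0$ with $F$ plus the number of forbidden corners at singularities), to check that each local move strictly decreases this integer while never increasing $I(\cdot,F)$, and to conclude termination from that. You should also separate the two roles: the moves produce \emph{some} quasi-transverse representative; that it realizes the infimum is a separate statement (otherwise ``take a minimizer of $c$'' presupposes that the infimum of $I$ is attained, which is part of what is being proved).

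Second, and more seriously, the sentence ``the quasi-transverse condition translates in the cover into the statement that $\tilde\alpha'$ never backtracks across a leaf'' is precisely the heart of the minimality assertion, and it is not automatic. One must rule out a bigon in $\tilde S$ bounded by a subarc of $\tilde\alpha'$ and a subarc of a leaf of $\tilde F$. The standard argument (FLP's stability lemma) runs an Euler--Poincar\'e index count on the induced singular foliation of such a bigon: the corners allowed by quasi-transversality force the total index to be too negative to equal the Euler characteristic of a disc, a contradiction. Without this (or an equivalent) argument, the claim that $I(\alpha',F)$ equals the transverse distance between $\tilde x$ and $T\tilde x$ is unsupported. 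The same bigon-exclusion lemma is what you need to make the strip $R$ in the third part leaf-saturated, so supplying it would close both remaining gaps; note also that singular leaves of $\tilde F$ branch and do not separate $\tilde S$ into two half-planes, so the ``transverse measure between two points'' should be set up via the (non-Hausdorff) leaf space or via nonsingular leaves only. You correctly flag the difficulties in the uniqueness step yourself; with the index lemma in hand, your description of the $T$-invariant strip and its quotient annulus is the standard conclusion.
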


The last part of the statement is in some sense a uniqueness property for a quasi-transverse representative of the element $\alpha$ in $\mathcal{S}$, in much the same way as a geodesic representative of $\alpha$ is a canonical representative of it relative to a hyperbolic metric on $S$.
  
\subsection{Thurston's compactification of Teichm\"uller space} 
There are natural actions of the multiplicative group of positive reals on the spaces $\mathcal{ML}$ and  $\mathcal{MF}$ respectively, namely, by multiplication of the transverse measures of a  lamination or foliation by a constant. The quotient spaces by these actions are denoted respectively by $\mathcal{PML}$ and  $\mathcal{PMF}$ and are called projective measured lamination space and projective measured foliation space respectively. A theorem of Thurston (see \cite[Chapter 8]{FLP} where this theorem is proved in the case of measured foliations) says that the image of each of these spaces in the projective function space $P\mathbb{R}_{\geq 0}^{\mathcal{S}}$ (see \S \ref{ss:laminations} and \S \ref{ss:foliations} above) is disjoint from the image of the embedding of Teichm\"uller space in that space defined using the geodesic length functions (see \S \ref{ss:T}). The union of these images, which we naturally call $\mathcal{T}\cup\mathcal{PML}$ and  $\mathcal{T}\cup\mathcal{PMF}$ respectively, are two equivalent versions of Thurston's compactifications of Teichm\"uller space. 

We shall use the following convergence criterion, for a sequence of points in Teichm\"uller space converging to an element of $\mathcal{PML}$ or $\mathcal{PMF}$ respectively (see \cite[Expos\'e 8]{FLP}):

\begin{proposition} Let $g_n$ be a sequence of points in Teichm\"uller space which tends to infinity in the sense that for any compact subset $K$ of $\mathcal{T}(S)$, $g_n$ is in the complement of $K$ for all $n$ large enough. Then, $g_n$ converges to an element $[\lambda]$ of Thurston's boundary $\mathcal{PML}$ or $\mathcal{PMF}$ if and only if there exists a representative  $\lambda$ of $[\lambda]$ in $\mathcal{ML}$ or $\mathcal{MF}$ respectively, and a sequence of real numbers $x_n$ ($n\geq 0$) satisfying $\lim_{n\to\infty} x_n=0$ such that for any $\alpha\in \mathcal{S}$, we have
 $x_nl_{g_n}(\alpha)\to i(\lambda,\alpha)$
as $n\to\infty$.
\end{proposition}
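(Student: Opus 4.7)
The plan is to deduce both directions directly from the construction of Thurston's compactification recalled just above, with the only substantive step being the verification of the normalisation $x_n\to 0$ in the forward implication.

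By construction, $\mathcal{T}\cup\mathcal{PML}$ is obtained by embedding $\mathcal{T}$ via $g\mapsto l_g(\cdot)$ and $\mathcal{ML}$ via $\lambda\mapsto i(\lambda,\cdot)$ into $\mathbb{R}_{\geq 0}^{\mathcal{S}}$, projectivising, and taking the closure. Hence $g_n\to[\lambda]$ in this compactification is, by definition, the same as $[l_{g_n}(\cdot)]\to[i(\lambda,\cdot)]$ in $P\mathbb{R}_{\geq 0}^{\mathcal{S}}$; and this projective convergence is equivalent to the existence of positive scalars $x_n$ and of a representative $\lambda$ of the class $[\lambda]$ such that $x_n l_{g_n}(\alpha)\to i(\lambda,\alpha)$ pointwise on $\mathcal{S}$. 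Granting this, the backward direction is immediate: given $x_n\to 0$ and $\lambda$ with $x_n l_{g_n}(\alpha)\to i(\lambda,\alpha)$ for every $\alpha$, the projective classes $[l_{g_n}]$ converge to $[i(\lambda,\cdot)]$ and hence $g_n\to[\lambda]$ in $\mathcal{T}\cup\mathcal{PML}$.

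For the forward direction I would show that, under the hypothesis $g_n\to\infty$ in $\mathcal{T}$, the scalars $x_n$ furnished by projective convergence must tend to zero. The argument is by contradiction: if a subsequence satisfies $x_n\geq\varepsilon>0$, then the convergence $x_n l_{g_n}(\alpha)\to i(\lambda,\alpha)$ forces each $l_{g_n}(\alpha)$ to remain uniformly bounded above for every $\alpha\in\mathcal{S}$. Working in Fenchel--Nielsen coordinates adapted to a pants decomposition $\gamma_1,\ldots,\gamma_k$, this uniform boundedness then controls every coordinate of $g_n$: the lengths $l_{g_n}(\gamma_i)$ are bounded above by hypothesis and are bounded below by the collar lemma (otherwise any simple closed curve crossing $\gamma_i$ would have length tending to infinity, violating the upper bound); and the twist coordinates along each $\gamma_i$ are controlled by the upper bound on $l_{g_n}(\delta)$ for a curve $\delta$ with $i(\gamma_i,\delta)>0$, since a twist of magnitude $T$ changes $l_{g_n}(\delta)$ by an amount growing linearly in $T\cdot l_{g_n}(\gamma_i)\cdot i(\gamma_i,\delta)$. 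Thus $g_n$ would stay in a compact subset of $\mathcal{T}$, contradicting $g_n\to\infty$.

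The main obstacle is precisely this precompactness step. One must exploit the uniform upper bound on the length of \emph{every} simple closed curve, not only those in the chosen pants decomposition, to bound the twists, while the collar lemma is essential to prevent any $\gamma_i$ from pinching. The companion statement for $\mathcal{T}\cup\mathcal{PMF}$ then follows from the natural homeomorphism $\mathcal{ML}\cong\mathcal{MF}$ and the coincidence of the two intersection functions on $\mathcal{S}$ recorded in \S\ref{ss:foliations}.
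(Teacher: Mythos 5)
Your argument is correct, and it is essentially the standard one: the paper itself does not prove this proposition but quotes it from \cite[Expos\'e 8]{FLP}, where the same two ingredients appear, namely that convergence in the compactification is by construction projective convergence of the functions $l_{g_n}(\cdot)$ to $i(\lambda,\cdot)$ in $P\mathbb{R}_{\geq 0}^{\mathcal{S}}$, and that the normalising scalars must tend to $0$ because the length-function embedding $\mathcal{T}\to\mathbb{R}_+^{\mathcal{S}}$ is proper. Two small points deserve to be made explicit. First, to pass from convergence of projective classes to the existence of the scalars $x_n$, pick a curve $\alpha_0$ with $i(\lambda,\alpha_0)>0$ and set $x_n=i(\lambda,\alpha_0)/l_{g_n}(\alpha_0)$; this gives a local section of the projection near $[\lambda]$ and yields $x_n l_{g_n}(\alpha)\to i(\lambda,\alpha)$ for all $\alpha$. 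Second, your precompactness step (upper bounds on all $l_{g_n}(\alpha)$ force the Fenchel--Nielsen lengths into a compact interval via the collar lemma and bound the twists via the linear growth of transversal lengths under twisting) is exactly the properness statement proved in \cite[Expos\'e 7]{FLP}; the twist estimate $l(\delta)\geq |t|\,i(\gamma_i,\delta)-C$ needs the constant $C$ to be uniform over the compact set of length parameters already secured, which is where the order of your two bounds matters. With those caveats the proof is complete and matches the cited source.
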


\section{Surfaces obtained by gluing ideal triangles} \label{s:ideal}
 The goal of this section is to show how any hyperbolic surface can be 
decomposed into a union of ideal triangles, and how the Teichm\"uller space of this surface can be parametrized by using shift parameters on the edges of an ideal triangulation.

\subsection{Horocyclic measured foliation}

An ideal triangle is equipped with a canonical partial measured foliation (that is, a foliation whose support is a subsurface) called the \emph{horocyclic measured foliation}. The leaves of this foliation are pieces of horocycles that are centered at the three vertices of the triangle. The non-foliated region of the triangle is a central region bounded by three pieces of horocycles, meeting each other  tangentially. To see that this foliation is  determined by the last property, we may consider first the case of an ideal triangle which is symmetric with respect to the Euclidean metric in the disc model of hyperbolic space (see Figure  \ref{fig:horo}). The (Euclidean) symmetry of that disc shows that this foliation is unique. We then appeal to the fact that any two ideal triangles are isometric.

The horocyclic foliation carries a canonical transverse measure which is uniquely determined by the property that on the edges of the ideal triangle, this measure coincides with hyperbolic distance.  
\begin{figure}[htbp]

\centering

\includegraphics[width=12cm]{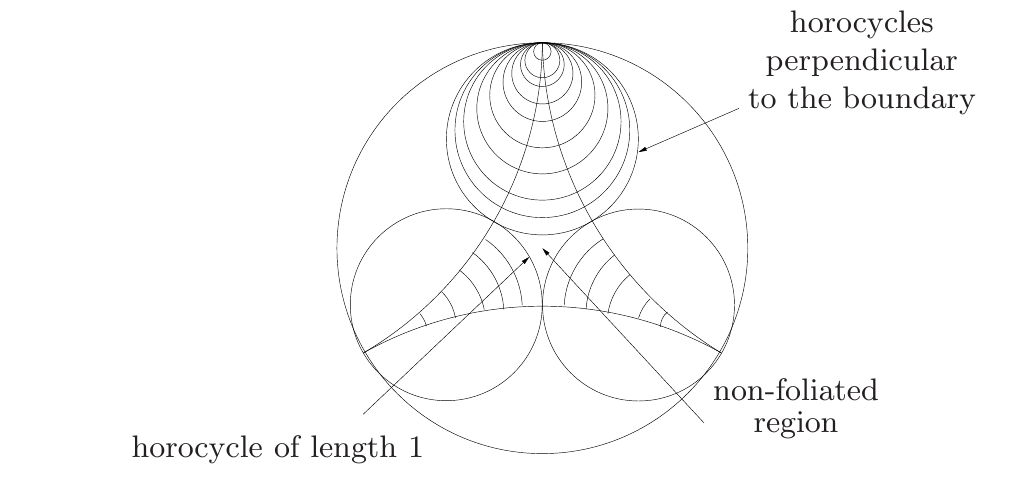}

\caption{\small The horocyclic foliation of an ideal triangle}
\label{fig:horo}
\end{figure}

We may replace the measured horocyclic partial foliation of the idea triangle by a measured foliation of full support with a 3-prong singularity, as indicated in Figure \ref{fig:collapse}, and obtain a measured foliation on the ideal triangle which is well-defined up to an isotopy of the ideal triangle which preserves the boundary pointwise.

\begin{figure}[htbp]

\centering

\includegraphics[width=12cm]{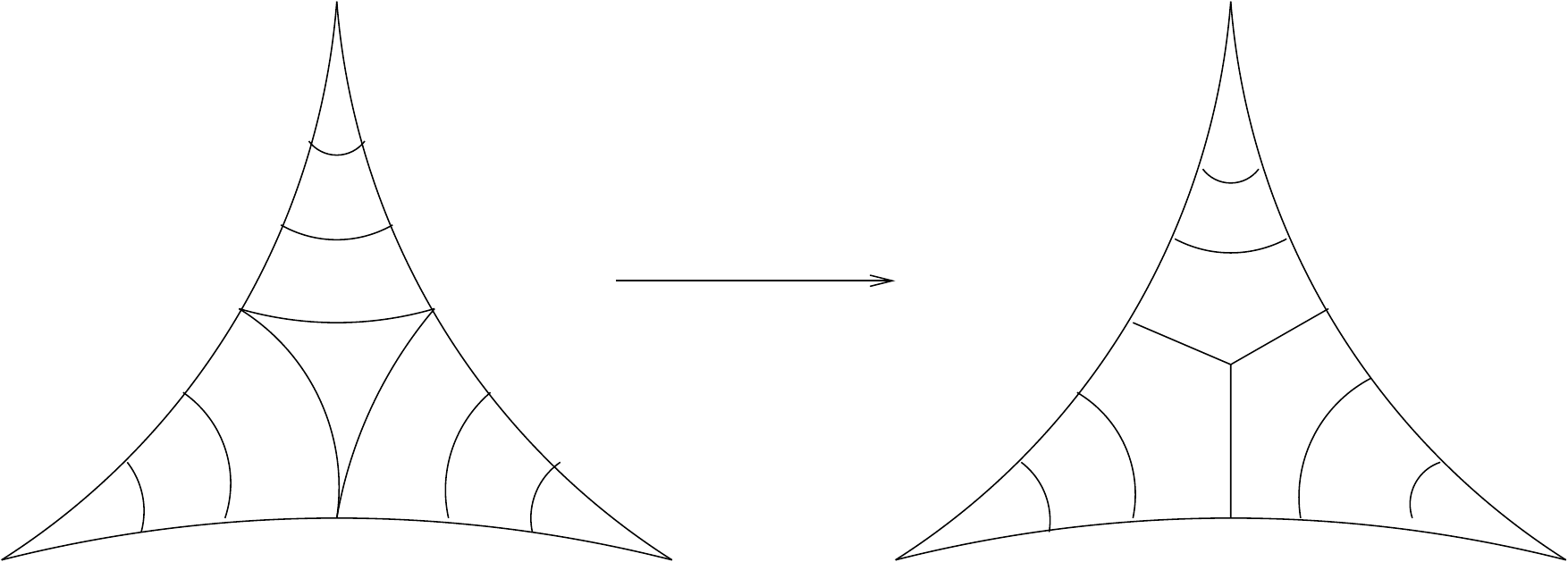}

\caption{\small Collapsing the non-foliated region of the horocyclic measured foliation of an ideal triangle onto a tripod}
\label{fig:collapse}
\end{figure}

From horocyclic foliations of ideal triangles, we pass now to horocyclic foliations associated with maximal geodesic laminations.

The complement  $S\setminus \mu$ of a geodesic lamination $\mu$ on a hyperbolic surface $S$ consists of finitely many subsurfaces whose completions are subsurfaces with boundary. 

A geodesic lamination $\mu$ is said to be \emph{maximal} if each such completion is isometric to an ideal triangle.

 Let $g$ be a hyperbolic structure on $S$ and let $\mu$ be a maximal geodesic lamination on the hyperbolic surface $(S,g)$.  We equip each ideal triangle in the complement of $\mu$ with its horocyclic measured foliation. These measured foliations match together nicely: their leaves are perpendicular to the boundary of the ideal triangle, and when two triangles share a common edge,  the transverse measures on that edge from each side coincide. The union of these foliations of ideal triangles is a measured foliation on $S$ called the \emph{horocyclic measured foliation} of $\mu$ with respect to $g$. The equivalence class of this  measured foliation is well defined as an element of $\mathcal{MF}$. We denote it by $F_{\mu}(g)$. With the definition we gave, the fact that the hyperbolic metric $g$ is complete is equivalent to the fact that the associated horocyclic foliation $F_{\mu}(g)$ is compactly supported.

\subsection{Gluing two ideal triangles}\label{gluing} We shall study the geometry of a hyperbolic surface homeomorphic to a pair of pants (3-punctured sphere) which is obtained by gluing two ideal triangles along their boundary. Before this, we observe that there are two combinatorially distinct ways of gluing two ideal triangles along their boundaries to obtain, from the topological point of view, a pair of pants. These two combinatorial gluings are described schematically in Figure \ref{fig:two}.  The topological results of these gluings, seen as decompositions of the 3-punctured sphere  into two triangles, are represented in Figure \ref{fig:combinatorial}, where the three punctures correspond to the three vertices $A,B,C$ of the triangulations drawn on the sphere, arising from the vertices of the triangles that are glued.

\begin{figure}[htbp]
\centering
\includegraphics[width=7cm]{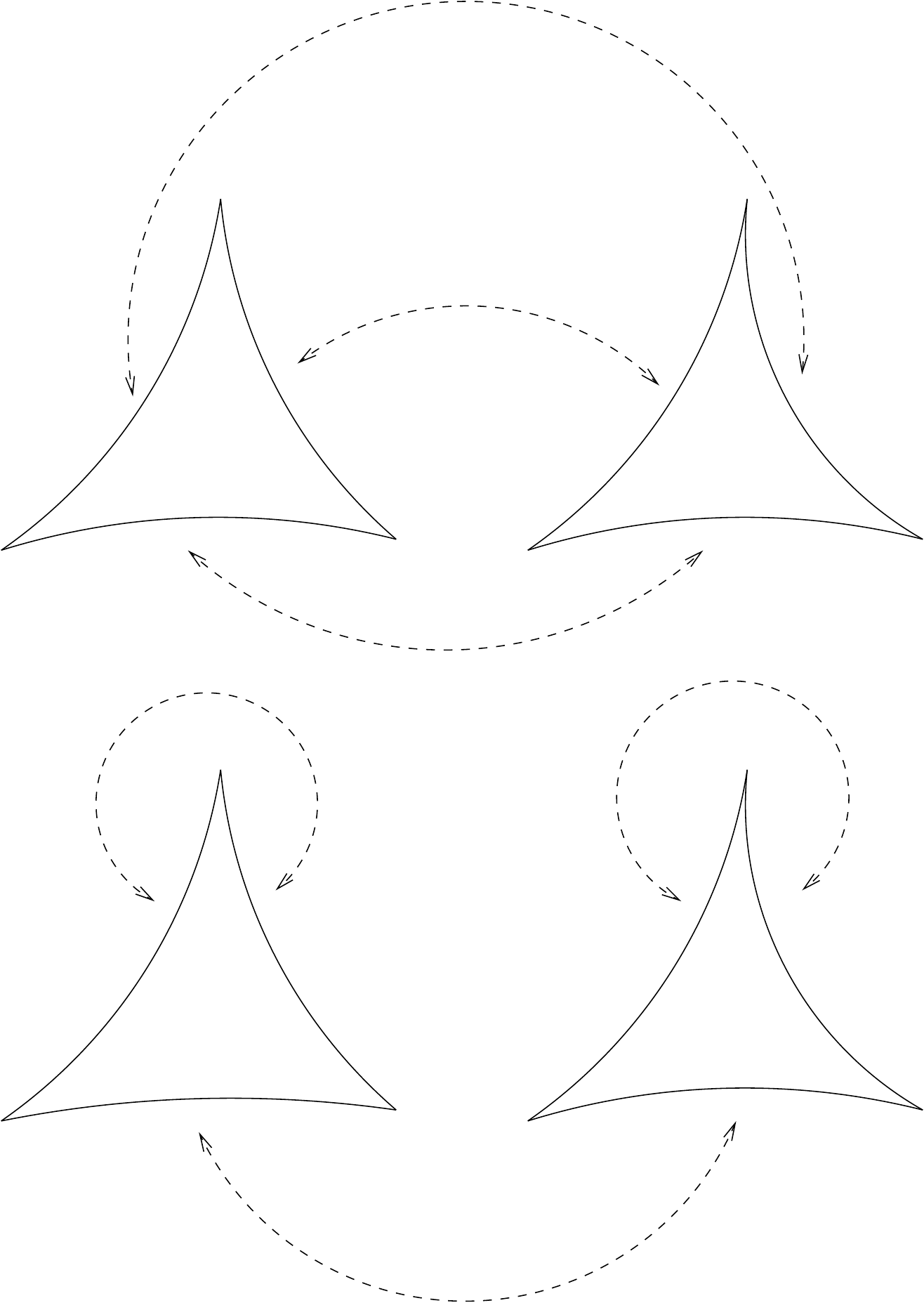}
\caption{\small Two combinatorially different gluings of two ideal triangles that give a hyperbolic pair of pants}
\label{fig:two}
\end{figure}

\begin{figure}[htbp]
\centering
\includegraphics[width=10cm]{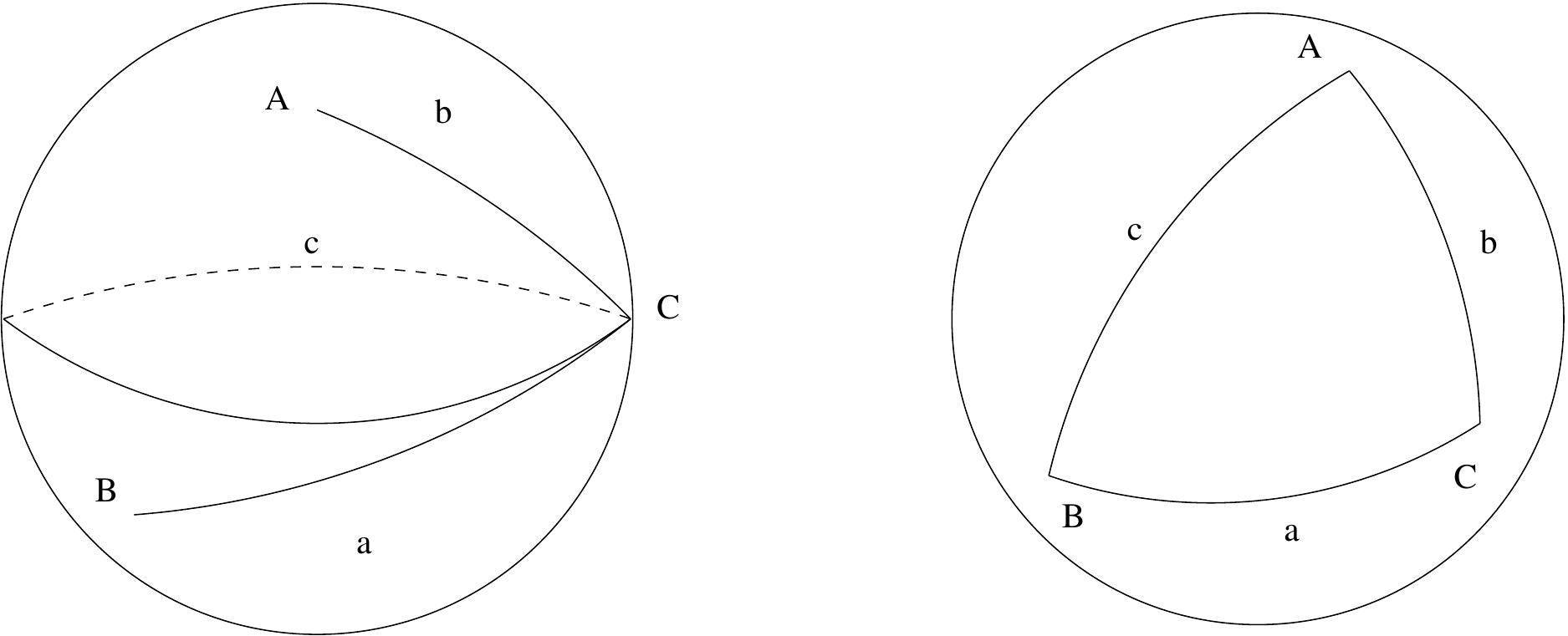}
\caption{\small The two combinatorial triangulations of the sphere obtained by the two gluings of Figure \ref{fig:two}. In each case, the graph drawn is the 1-skeleton of the triangulation. In the figure on the left hand side, when we cut the sphere along the 1-skeleton of the triangulation, we find two triangles, one with side $c$ and two sides called $b$ which are glued together, and the other one with side $c$ and two sides called $a$ which are glued together. On the right hand side, the two triangles that are glued have vertices $A,B,C$ and sides $a,b,c$.}
\label{fig:combinatorial}
\end{figure}
 
From the geometric point of view, the result of each of these gluings may give different types of hyperbolic pairs of pants (see Figure \ref{fig:4cases}): they may be complete or not, they may have 0, 1, 2 or 3 cusps.  The geometric type of the pair of pants obtained is best described using Thurston's shift (or shear) parameters which we now review.
\begin{figure}[htbp]

\centering
 \psfrag{(i)}{\small $(i)$}
  \psfrag{(i)}{\small $(ii)$}
   \psfrag{(i)}{\small $(iii)$}
    \psfrag{(i)}{\small $(iv)$}
\includegraphics[width=12.3cm]{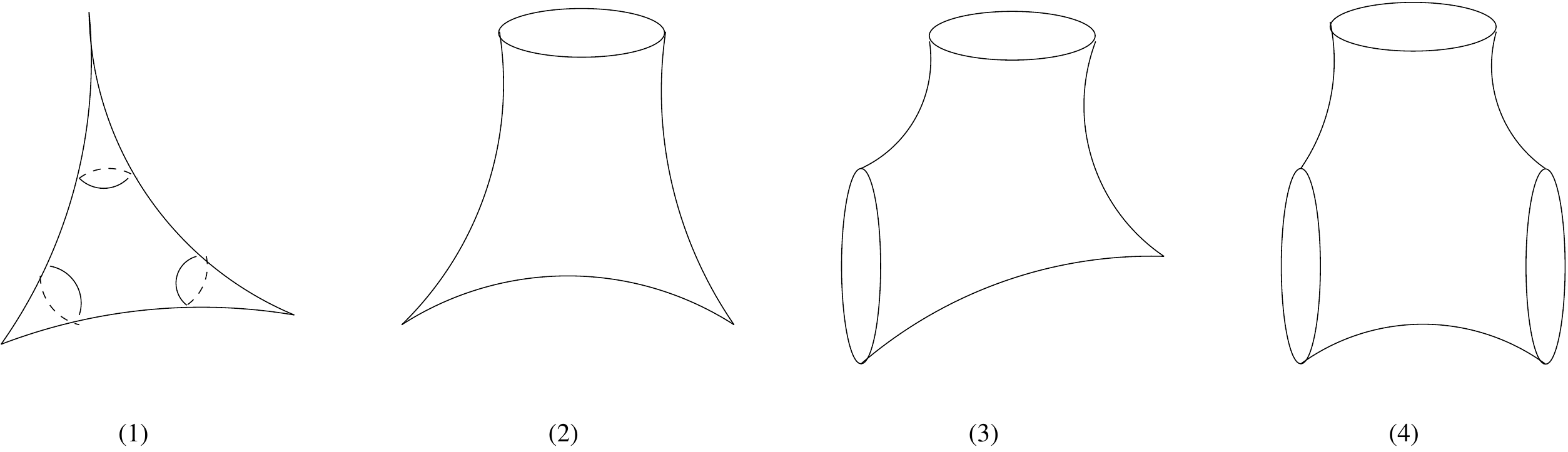}

\caption{\small The four kinds of geometric pairs of pants obtained by gluing two ideal triangles}
\label{fig:4cases}
\end{figure}

To define these coordinates, we first note that on each edge of an ideal triangle, there is a \emph{distinguished point}, namely, the intersection point of this edge with the boundary of the nonfoliated region of the horocyclic foliation of this triangle. Equivalently, this distinguished point is the intersection point of the given edge with the singular leaf of the foliation, when the nonfoliated region is collapsed onto a tripod (as in Figure \ref{fig:collapse}). Gluing two ideal triangles along an edge (the two triangles might be the same, but the edges different) induces on this edge a \emph{shift parameter}. This is the signed distance (a distance equipped with a sign) between the two distinguished points that are associated with that edge, when this edge is regarded as an edge of two ideal triangles, one from each side. (Note that the two triangles that are glued together might be the same, but the edges must be distinct.) The distance between the distinguished points is measured using the notion of hyperbolic length on that edge, and the sign is positive or negative depending on whether an observer standing on that edge and looking towards a triangle adjacent to that edge, sees that these two distinguished points differ by a left shift or a right shift respectively. (The sign does not depend on the choice of the side to which the observer looks, and it does not use any orientation on that edge; it depends only on the choice of an orientation of the surface.) The two cases lead to two different signs of the shifts and they are represented in Figure \ref{fig:positive}.

\begin{figure}[htbp]

\centering

\includegraphics[width=10cm]{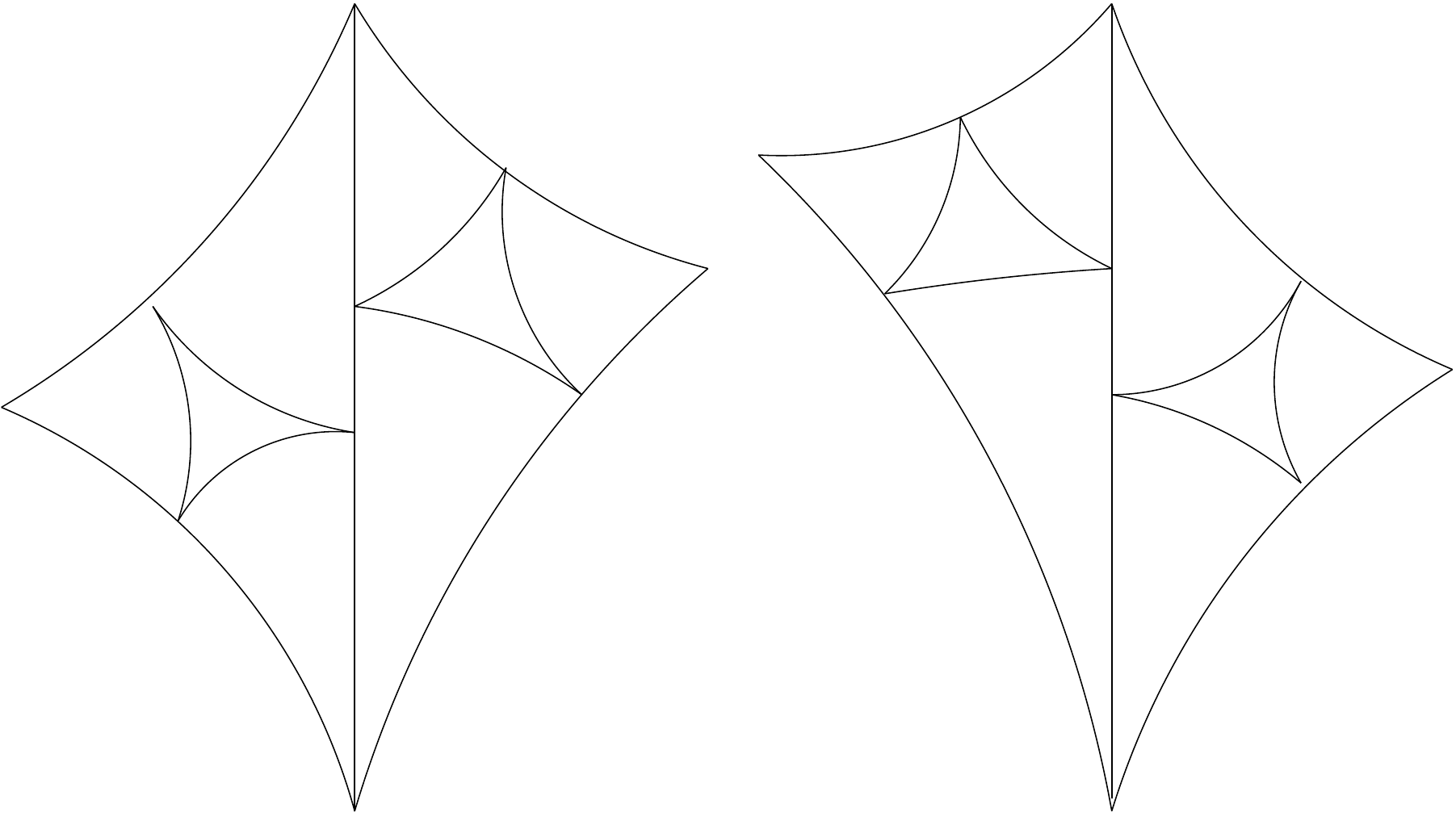}

\caption{\small Gluing two ideal triangles along an edge. The distinguished points are indicated on the edges that are glued. On the figure on the left hand side, the shift is negative and on the one on the right-hand side it is positive.}
\label{fig:positive}
\end{figure}

\begin{figure}[htbp]

\centering

\includegraphics[width=10cm]{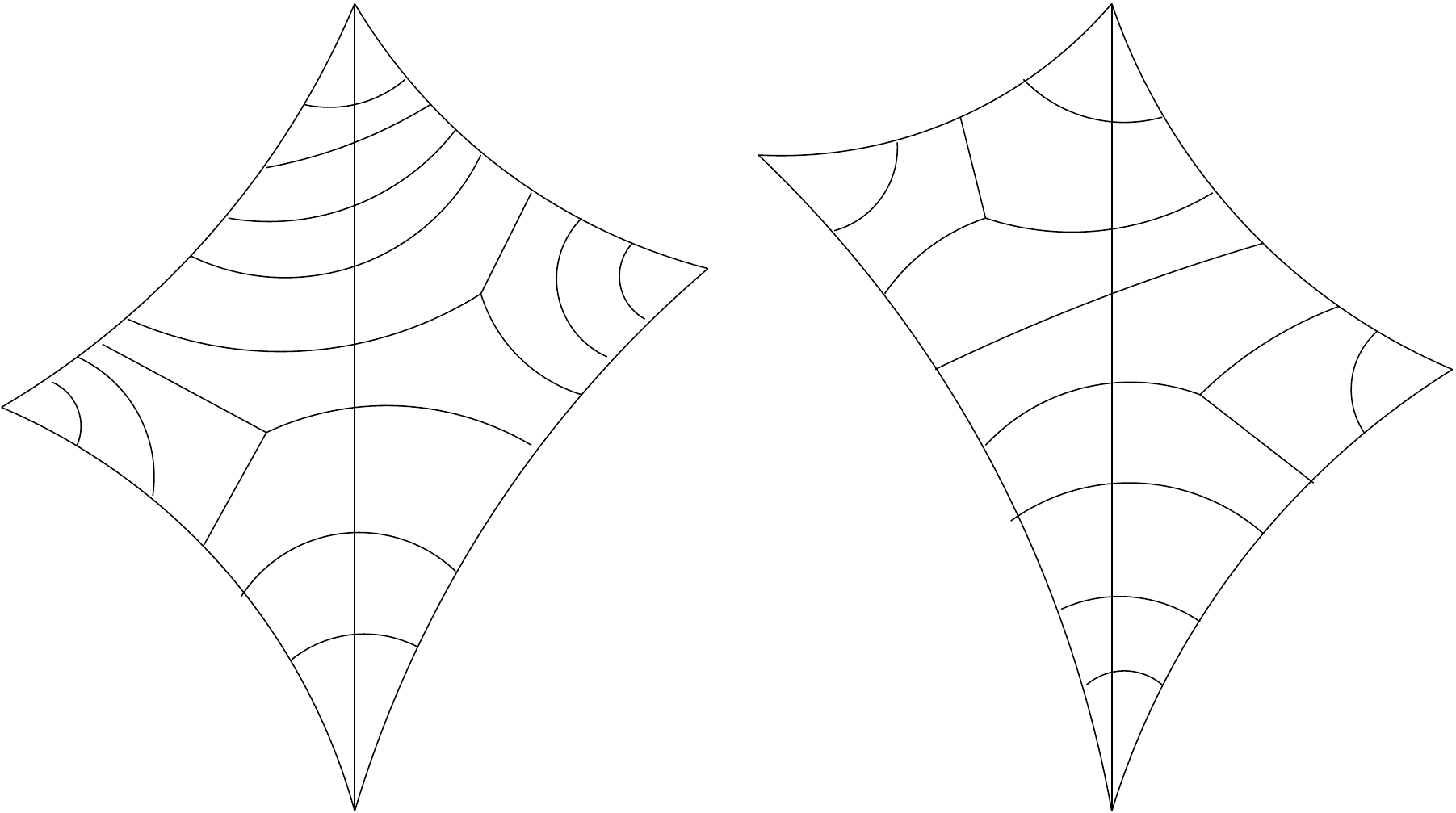}

\caption{\small Shift coordinates for measured foliations. On the left-hand side, the shift is negative and on the right-hand side it is positive.}
\label{fig:shift-foliations}
\end{figure}

Now for an arbitrary vertex of the triangulation of the sphere represented in Figure \ref{fig:combinatorial}, there are 1, 2 or 4 half-edges which locally terminate at this vertex:  On the sphere represented on the left hand side of that figure, there is one half-edge terminating at $A$ and there are three half-edges terminating at $C$. On the one represented on the right hand side, there are two half-edges terminating at each of the points $A,B,C$. The resulting geometric pair of pants obtained by gluing in either way the two ideal triangles may be complete or not (Figure \ref{fig:4cases}). We know that for this surface to be complete, the neighborhood of each puncture must be, geometrically, a cusp. This depends on the sum of the shift parameters of the half-edges that terminate at that vertex.  The precise result is the following:

\begin{proposition}\label{prop:gluing2}
Let $v$ be a puncture of a pair of pants obtained by gluing two ideal triangles.  Then, the hyperbolic structure at this puncture is complete (or, equivalently, the puncture is a cusp) if and only if the sum of  the shift parameters of the half-edges that terminate at that puncture is zero.

 In the case where the hyperbolic structure at the puncture is not complete, then the completion of the surface at that puncture is obtained by adjoining to it a simple closed curve which makes this surface at that puncture a surface with boundary. The added boundary component is geodesic. Furthermore, the length of this boundary component is equal to the absolute value of the sum of  the shift parameters of the half-edges that terminate at the given puncture.

\end{proposition}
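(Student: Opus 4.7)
The plan is to lift a neighborhood of the puncture $v$ to the universal cover and analyze the holonomy around $v$ as an affine Möbius transformation fixing a common ideal vertex. First, I normalize so that one preimage of $v$ is the point $\infty$ in the upper half-plane model. The ideal triangles of the pair of pants meeting at $v$ then lift to a finite cyclic sequence $T_1,\dots,T_k$ of ideal triangles, each having $\infty$ as a vertex, with $T_i$ and $T_{i+1}$ sharing a vertical edge. After one full turn around $v$, the lift returns to $\phi(T_1)$, where $\phi \in \mathrm{PSL}(2,\mathbb{R})$ is the holonomy of a small loop around $v$. Since $\phi$ fixes $\infty$, it is affine: $\phi(z)=\lambda z+\beta$ with $\lambda>0$. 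Completeness at $v$ (the end being a cusp) is equivalent to $\phi$ being parabolic with fixed point $\infty$, i.e.\ to $\lambda=1$.

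Next I compute the positions of the distinguished points. In the normalized triangle with vertices $0,1,\infty$, the condition that the three horocycles be mutually tangent forces the horocycle at $\infty$ to lie on the line $y=1$, giving distinguished points $(0,1)$ and $(1,1)$ on the two vertical edges. By affine scale-invariance of the picture, for a triangle with vertices $\infty,a,b$ the distinguished point on the vertical edge at $x=a$ (respectively $x=b$) sits at Euclidean height $|b-a|$. Writing $T_i$ with vertices $\infty,a_{i-1},a_i$ and $a_{i-1}<a_i$, the two distinguished points on the common edge of $T_i$ and $T_{i+1}$ at $x=a_i$ sit at heights $a_i-a_{i-1}$ and $a_{i+1}-a_i$. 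Using the vertical-line length formula recalled in \S\ref{ss:horocycles}, the shift parameter along this half-edge equals
\[ s_i=\varepsilon\,\log\!\left(\frac{a_{i+1}-a_i}{a_i-a_{i-1}}\right), \]
with a fixed sign $\varepsilon\in\{+1,-1\}$ determined once and for all by the orientation of $S$ and the convention of \S\ref{gluing}.

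Summing over the cyclic sequence of half-edges at $v$, the log-ratios telescope to
\[ \sum_{i=1}^{k}s_i=\varepsilon\,\log\!\left(\frac{a_{k+1}-a_k}{a_1-a_0}\right). \]
The relation $\phi(T_1)=T_{k+1}$ together with $\phi(z)=\lambda z+\beta$ yields $a_{k+1}-a_k=\lambda(a_1-a_0)$, hence $\sum_{i=1}^k s_i=\varepsilon\log\lambda$. This proves the first assertion: the sum vanishes if and only if $\lambda=1$, if and only if $\phi$ is parabolic, if and only if $v$ is a cusp.

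For the second assertion, when $\lambda\neq 1$ the isometry $\phi$ is hyperbolic with fixed points $\infty$ and $\beta/(1-\lambda)$ on $\partial\mathbb{H}^2$; its axis is the vertical geodesic joining these two points and its translation length is $|\log\lambda|$. The metric completion of the end of $S$ at $v$ is obtained by adjoining the quotient of this axis by $\langle\phi\rangle$, which becomes a simple closed geodesic boundary component of length $|\log\lambda|=|\sum s_i|$. The main delicate point in making the argument rigorous is to keep the sign $\varepsilon$ consistent across all $k$ half-edges, so that it is the signed sum (and not a sum of absolute values) that telescopes. Once one observes that traversing the loop around $v$ prescribes a single rotational direction determined by the orientation of $S$, and that the left/right shift convention of \S\ref{gluing} refers to the same orientation, the coherence of $\varepsilon$ is automatic and the telescoping is immediate.
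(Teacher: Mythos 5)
Your argument is correct and is essentially the proof the paper points to: the paper defers to Thurston's notes and to Proposition 4.1 of \cite{Shift}, describing the method as ``the study of the developing map,'' which is exactly what you carry out (developing the triangles around a lift of $v$ at $\infty$, reading off the distinguished points at Euclidean heights $a_i-a_{i-1}$, telescoping the shifts to $\log\lambda$, and identifying completeness with parabolicity of the holonomy and the boundary length with the translation length $|\log\lambda|$). The details check out, including the monotonicity of the $a_i$ and the sign coherence you flag at the end.
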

Thus, depending on the vanishing or not of the sum of the shift parameters at each puncture, we obtain one of the pairs of pants represented in Figure \ref{fig:4cases}. The pair of pants is complete only in the case represented on the left hand side. This is the case where at each puncture the sum of the shift parameters is zero. 
 
For the proof of this proposition, we refer the reader to \cite[\S 3.4]{Thurston-Princeton} or Proposition 4.1 of \cite{Shift}. This proof involves the study of the developing map of the hyperbolic structure of the pair of pants.

\begin{figure}[htbp]

\centering

\includegraphics[width=8cm]{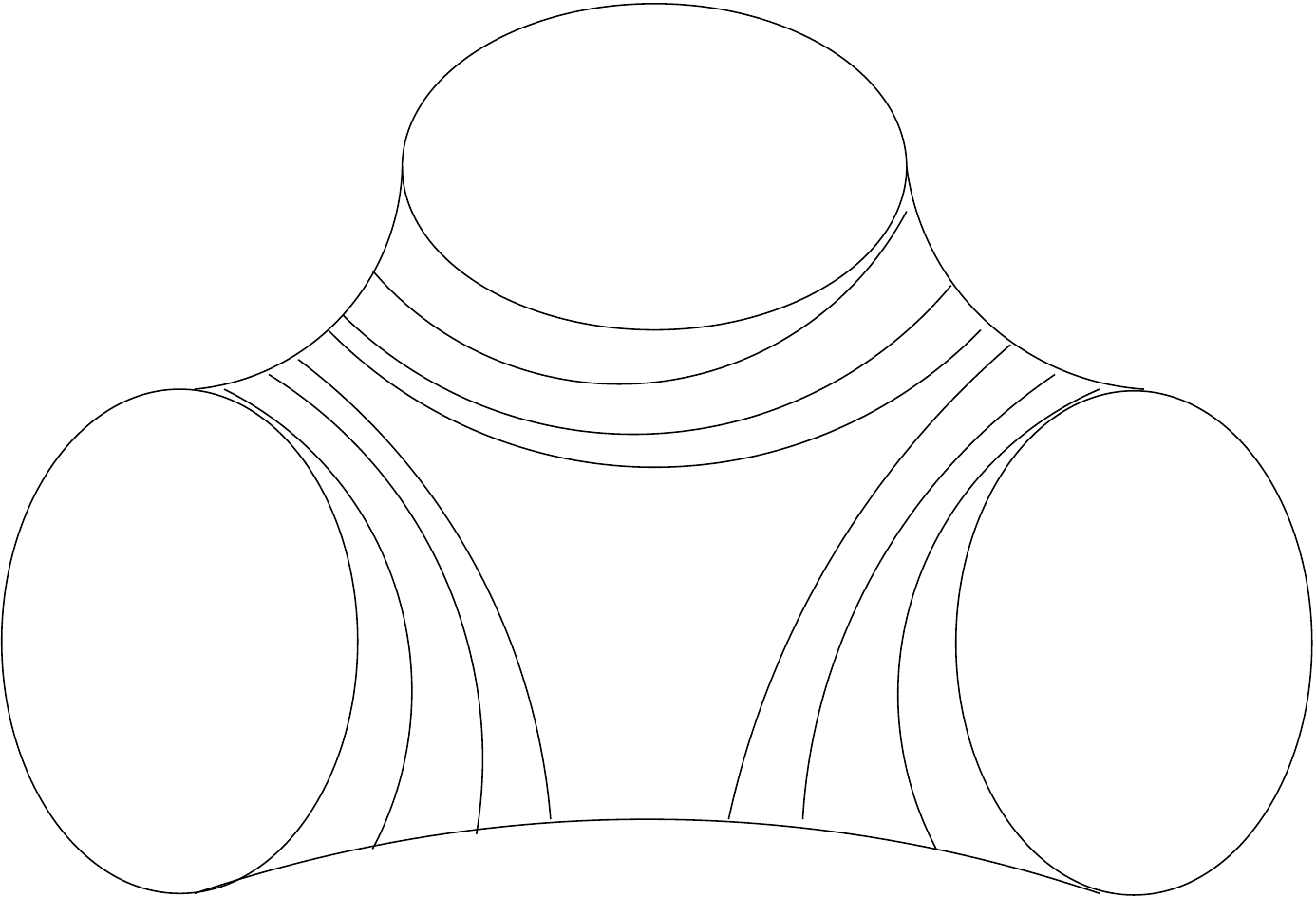}

\caption{\small A pair of pants with three boundary components obtained as a union of two ideal triangles. In the case represented, each of the two ideal triangles has one cusp spiraling along one of the boundary components of the pair of pants. In this pair of pants, the combinatorial type of the triangulation of the sphere corresponds to the one on the right hand side of Figure \ref{fig:combinatorial}.
The sum of the shifts at each puncture is nonzero, so that such a puncture becomes, when the surface is completed, a boundary component.}
\label{fig:spirals}
\end{figure}

\begin{figure}[htbp]

\centering

\includegraphics[width=8cm]{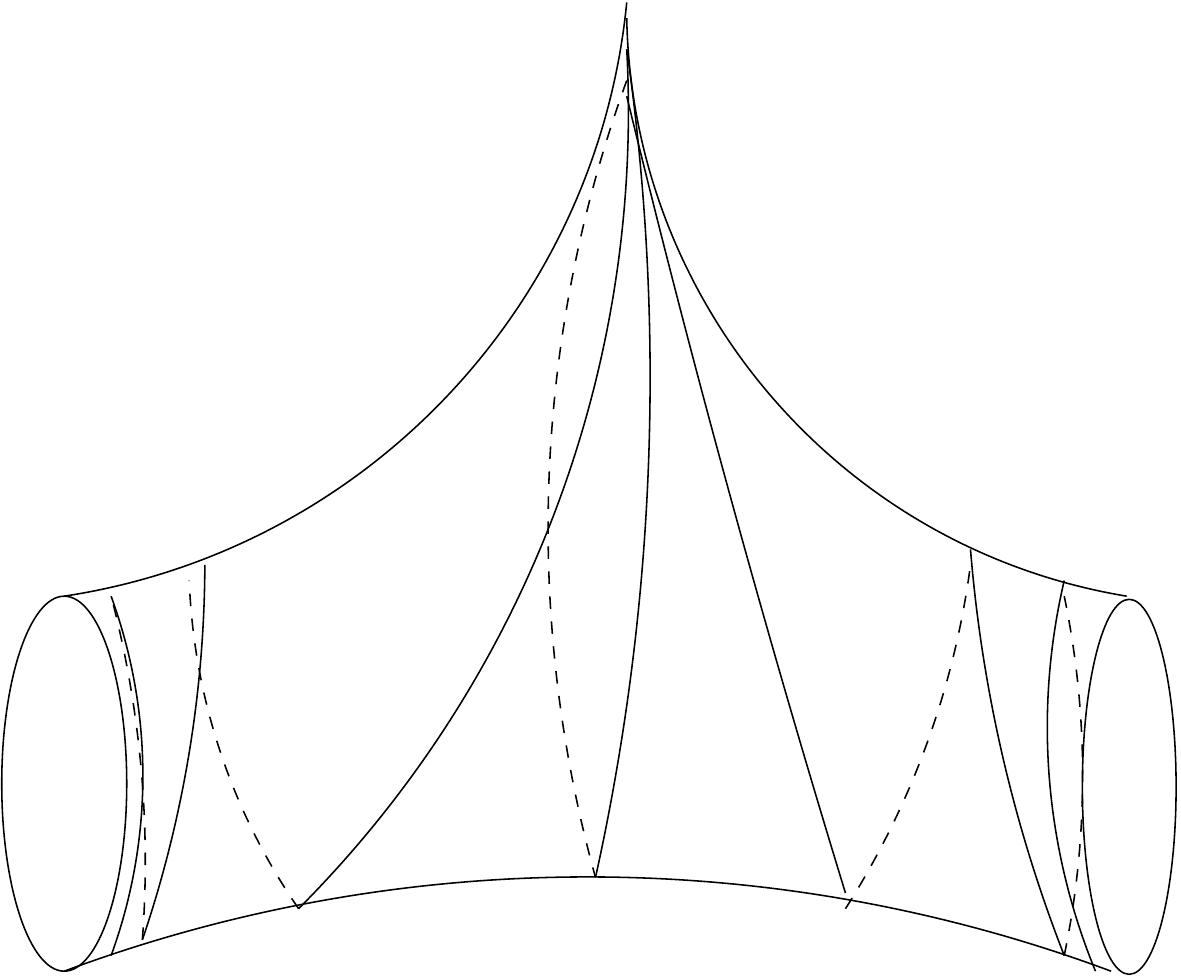}

\caption{\small  A pair of pants with two boundary components and one cusp obtained as a union of two ideal triangles. In the case represented, each of the two ideal triangles has one cusp converging to a cusp of the surface and another one spiraling along a boundary component .
 In this example, the combinatorics of the triangulation of the sphere is the one represented on the left hand side of Figure \ref{fig:combinatorial}.}
\label{fig:spirals2}
\end{figure}

\subsection{Other surfaces obtained by gluings ideal triangles} There is another way of gluing two ideal triangles, which gives a surface homeomorphic to a
  torus with one puncture, see Figure \ref{fig:torus}. A result analogous to that of Proposition \ref{prop:gluing2} holds: the gluing leads to a complete surface if and only if the sum of the shift parameters at the puncture (that is, the vertex of the combinatorial triangulation induced by the two triangles) is zero. Again, if this sum is nonzero, then the completion of the hyperbolic surface is obtained by adjoining to that surface a simple closed curve, and the surface becomes, at the given puncture, a surface with boundary, with the boundary being a simple closed geodesic. Furthermore, the length of this boundary component is equal to the absolute value of the sum of  the shift parameters of the half-edges that terminate at that puncture.

\begin{figure}[htbp]

\centering

\includegraphics[width=7cm]{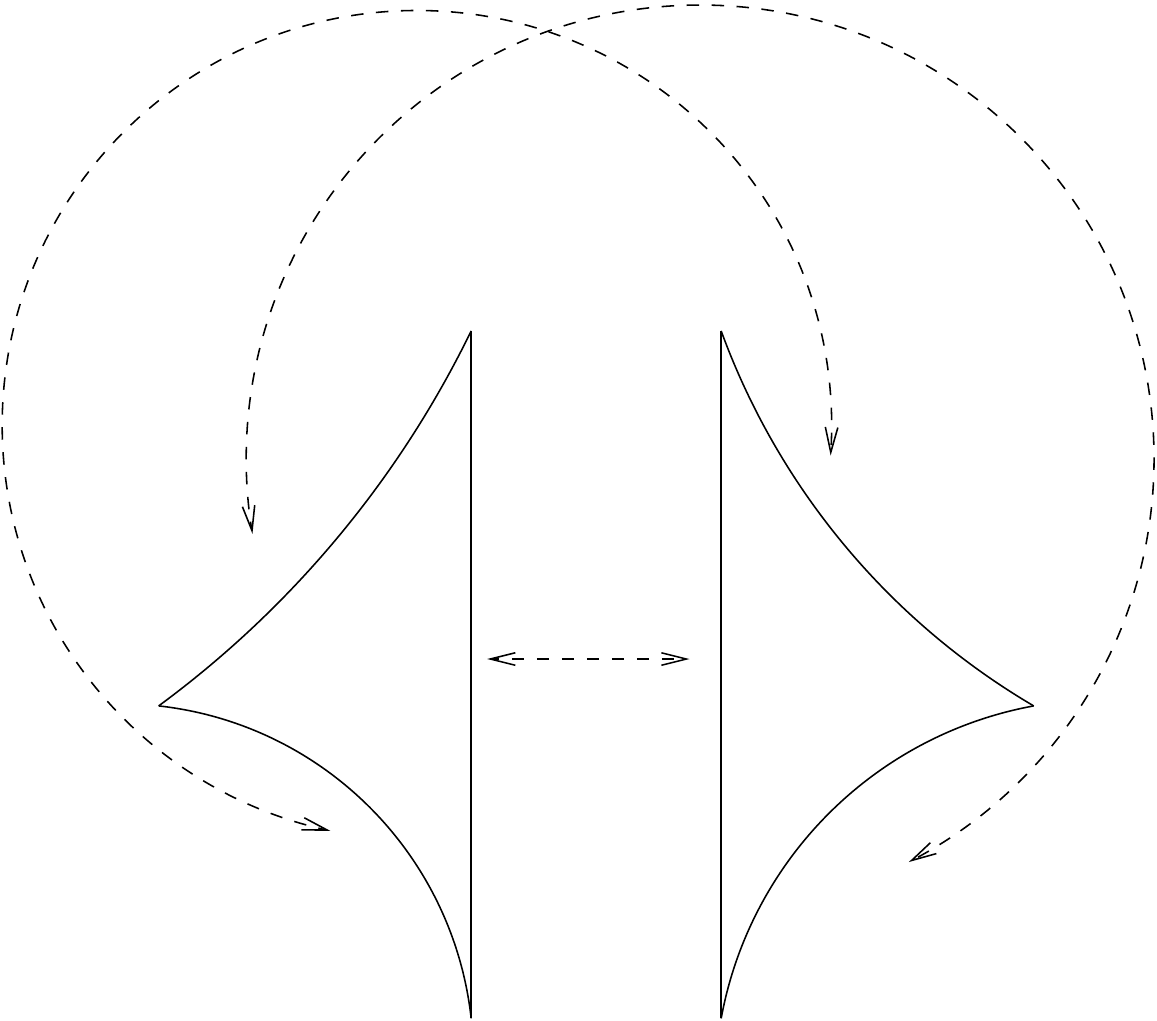}
 \psfrag{A}{\small $A$}
  \psfrag{B}{\small $B$}
 \psfrag{C}{\small $C$}
\psfrag{a}{\small $a$}
  \psfrag{b}{\small $b$}
 \psfrag{c}{\small $c$}
\caption{\small A gluing of two ideal triangles that gives a torus with one hole (one must be careful about making the orientations match)}
\label{fig:torus}
\end{figure}

More generally, a result analogous to that of Proposition \ref{prop:gluing2} holds for the gluing of any finite set of ideal triangles which gives a hyperbolic surface:  the surface at an arbitrary puncture is complete if and only if the sum of the half-edges terminating at that puncture is zero. In the case where the surface is not complete, its completion is obtained by adjoining to the surface, at each puncture, a closed geodesic whose length is equal to the absolute value of sum of the shifts of the half-edges that terminate at that puncture.

For the purpose of constructing closed surfaces of any arbitrary genus, it suffices to consider the pairs of pants with geodesic boundaries that we described above, which are obtained by gluing two ideal triangles, and to glue together such pairs of pants that have the same boundary lengths.  
 
\section{Thurston's metric}\label{s:metric}
In this section, we introduce Thurston's metric on Teichm\"uller space and we review some of its properties. As in the previous sections, $S$ is an oriented surface of finite topological type with negative Euler characteristic.

\subsection{The two definitions of Thurston's metric}\label{ss:Thurston}

Let $g$ and $h$ be two hyperbolic structures on  $S$ and let $\varphi:(S,g)\to (S,h)$ be a 
diffeomorphism which is homotopic to the identity.  
The \emph{Lipschitz constant} $\hbox{Lip}(\varphi)$ of $\varphi$ 
is defined by
\begin{displaymath}
\hbox{Lip}(\varphi)=\sup_{x\neq y\in S}\frac{d_{h}\big{(}\varphi(x),\varphi(y)\big{)}}{d_{g}\big{(}x,y\big{)}}.
\end{displaymath}
The infimum of such Lipschitz constants over all diffeomorphisms
  $\varphi$ in the isotopy class of the identity map of $S$ is denoted by
\begin{displaymath}
L(g,h)=\log\inf_{\varphi\sim\mathrm{Id}_{S}}\,\hbox{Lip}(\varphi).
\end{displaymath}

It is obvious that $L$ satisfies the triangle inequality. Furthermore, it satisfies $L(g,h)\geq 0$ for all $g$ and $h$ and $L(g,h)= 0$ if and only if $g=h$; see Thurston's proof of this result in \cite[Proposition 2.1]{Thurston1986}, based on the fact that any two hyperbolic structures on $S$ have the same area.

Varying $g$ and $h$ in their respective homotopy classes does not change the value of $L(g,h)$. Thus, $L$ may be considered as a function on $\mathcal{T}(S)\times  \mathcal{T}(S)$. We shall denote this new function by the same letter:
\begin{displaymath}
L: \mathcal{T}(S)\times \mathcal{T}(S)\to [0,\infty).
\end{displaymath}

This function is an asymmetric metric. In other words, it satisfies all the axioms of a metric except the symmetry axiom. Indeed, using a classical estimate on the width of a collar around a short geodesic, it is easy to see that there are elements $g$ and $h$ in $\mathcal{T}(S)$ satisfying $L(g,h)\not=L(h,g)$; see Thurston \cite[p. p. 12]{Thurston1986}. 
  One may construct metrics $g$ and $h$ with explicit formulae for the distances $d(g,h)\not= d(h,g)$; see e.g. Theorem \ref{th:cylindrical-anti-stretch} below, due to Th\'eret.  

 The distance function $L$ is called \emph{Thurston's metric} on Teichm\"uller space;
 
Thurston's development of this metric theory is based on the construction of some Lipschitz maps between ideal triangles, which lead to a construction of geodesics for this metric, called  stretch lines. We shall review stretch lines in \S \ref{s:stretch} below. They are one-parameter families of stretch maps between Riemann surfaces. Stretch maps are obtained by combining Lipschitz  maps between ideal triangles so as to get Lipschitz maps between hyperbolic surfaces decomposed into unions of ideal triangles.   
  
 Thurston showed that the distance $L(g,h)$ between two hyperbolic surfaces can also be computed by comparing lengths of closed geodesics in corresponding homotopy classes,
measured with the metrics $g$ and $h$. We recall the definition: 

If $h$ is a hyperbolic metric on $S$ and $\gamma$ an element of $\mathcal{S}$, we recall that $l_h(\gamma)$ denotes the length of the unique geodesic (for the metric $h$) in the homotopy class   $\gamma$. Then, for any $\gamma$ and for any two hyperbolic metrics $g$ and $h$ on $S$, we  
     set
\[
K(g,h)=\log\sup_{\gamma\in\mathcal{S}}\,\frac{l_{h}(\gamma)}{l_{g}(\gamma)}.
\]

It is immediate to see that the function $K$, like the function $L$, does not change if the hyperbolic metrics $g$ and $h$ vary in their homotopy classes. Thus, $K$ defines a function on the Teichm\"uller space $\mathcal{T}(S)$,  which we denote by the same letter. It is clear from the definition that $K$ satisfies the triangle inequality.  We have $K(g,h)>0$ for all
 $g\not=h$. This is proved by Thurston in \cite[Theorem 3.1]{Thurston1986}.

Since the length of a curve under a $k$-Lipschitz map between surfaces is multiplied by a factor which is $\leq k$, it is easy to see that
 $K\leq L$. Thurston proved the following:
 
\begin{theorem}[Thurston  \cite{Thurston1986}, Theorem 8.5]\label{K=L}
L=K.
\end{theorem}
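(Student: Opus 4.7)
The inequality $K\le L$ follows directly from the defining property of a Lipschitz map: if $\varphi \simeq \mathrm{Id}_S$ has Lipschitz constant $k$, then for each $\gamma \in \mathcal{S}$ the $\varphi$-image of the $g$-geodesic representative of $\gamma$ is a closed curve freely homotopic to the $h$-geodesic representative of $\gamma$ and of length at most $k\,l_g(\gamma)$, so $l_h(\gamma) \le k\,l_g(\gamma)$. Taking the supremum over $\gamma$ and the infimum over $\varphi$ yields $K(g,h) \le L(g,h)$. For the reverse inequality, set $\kappa = \sup_{\gamma \in \mathcal{S}} l_h(\gamma)/l_g(\gamma) = e^{K(g,h)}$. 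The plan is to exhibit, for every $\varepsilon>0$, a diffeomorphism $\varphi \simeq \mathrm{Id}_S$ whose Lipschitz constant is at most $\kappa\,e^{\varepsilon}$, by first locating a geodesic lamination that is \emph{maximally stretched} from $g$ to $h$ and then assembling $\varphi$ out of stretch maps on ideal triangles.

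\emph{Step 1: the maximally stretched lamination.} Extend $l_g$ and $l_h$ continuously and positively homogeneously from $\mathcal{S}$ to $\mathcal{ML}$, using the density of weighted simple closed curves stated in Proposition \ref{prop:extension}. The ratio $l_h/l_g$ descends to a well-defined continuous function on the compact space $\mathcal{PML}$, and therefore attains its supremum at some projective class $[\mu_0]$. By density of the image of $\mathbb{R}_+\times\mathcal{S}$ in $\mathcal{ML}$, this supremum equals $\kappa$, so in particular $l_h(\mu_0)/l_g(\mu_0)=\kappa$.

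\emph{Step 2: the Lipschitz map.} Complete the support $|\mu_0|$ to a maximal geodesic lamination $\mu$ of $S$ by adjoining finitely many isolated leaves spiralling into $|\mu_0|$. Realize $\mu$ geodesically in both $(S,g)$ and $(S,h)$; in each, the complementary regions are ideal triangles. On each such triangle $T\subset(S,g)$ and its counterpart $T'\subset(S,h)$, define a diffeomorphism $T\to T'$ that maps the horocyclic foliation of $T$ to that of $T'$ while multiplying arc length measured along the leaves of $\mu$ by $\kappa$. A direct computation based on the horocyclic length formula of \S\ref{ss:horocycles} (the length of a horocyclic arc decreases by a factor $e^{-d}$ when one moves a hyperbolic distance $d$ into the cusp) shows that the differential of such a stretch map has operator norm exactly $\kappa$ at every interior point of $T$. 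Glue the triangle-by-triangle maps along the leaves of $\mu$, using the shift coordinates of \S\ref{s:ideal} to match distinguished points on the edges.

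\emph{The main obstacle.} The critical point is that the triangle pieces must glue into a continuous map of $S$ with global Lipschitz constant $\kappa$. The difficulty is twofold: the shift parameters of $(S,g)$ and $(S,h)$ along the edges of $\mu$ generically differ, and the horocyclic foliations $F_\mu(g)$ and $F_\mu(h)$ are not literally related by a $\kappa$-dilation of transverse measure, so one cannot simply define $\varphi$ as a uniform $\kappa$-stretch everywhere. This is precisely where the maximality of the ratio at $\mu_0$ is used: along $|\mu_0|$ the transverse measures of $F_\mu(g)$ and $F_\mu(h)$ really do scale by $\kappa$ (to within $\varepsilon$), and one can arrange the triangle maps so that the prescribed stretching is concentrated on leaves of $\mu$ accumulating onto $|\mu_0|$, while in the remaining horocyclic directions the differential is compressed. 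Carrying out this compatibility argument on each triangle, and gluing consistently across the edges of $\mu$, is the technical heart of the theorem; the shift-parameter discrepancies are absorbed into a bounded isotopy of $\varphi$ that costs only an extra factor $e^{\varepsilon}$, yielding $L(g,h)\le \log\kappa = K(g,h)$.
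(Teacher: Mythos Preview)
The paper does not actually prove this theorem: it records the statement, attributes it to Thurston \cite{Thurston1986}, and only remarks that ``its proof involves in a strong way the existence of stretch lines.'' So there is no paper proof to compare against; I will assess your sketch on its own terms and against Thurston's original argument.

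Your easy direction $K\le L$ and your Step~1 (existence of a maximizing projective measured lamination $[\mu_0]$ by compactness of $\mathcal{PML}$) are correct and are exactly how Thurston begins.

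Step~2, however, contains a genuine gap that your ``main obstacle'' paragraph does not close. You propose to complete $|\mu_0|$ to a maximal lamination $\mu$, and on each ideal triangle of $(S,g)\setminus\mu$ to apply the stretch map that multiplies arc length along $\mu$ by $\kappa$. The problem is that this procedure does \emph{not} produce the hyperbolic surface $(S,h)$. Stretching $g$ along $\mu$ by the factor $\kappa$ lands you at a very specific point of Teichm\"uller space, namely $\phi_\mu^{-1}(\kappa\, F_\mu(g))$ in the cataclysm coordinates of \S\ref{ss:cataclysm}, and there is no reason whatsoever for this point to be $h$. Your suggestion that the discrepancy is ``absorbed into a bounded isotopy \dots\ that costs only an extra factor $e^\varepsilon$'' is exactly the content of the theorem and cannot be asserted without proof: if one could always isotope a $\kappa$-Lipschitz map onto $h$ at arbitrarily small extra cost, the theorem would be trivial.

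Thurston's actual argument is an iterative one. He shows that after stretching $g$ a small amount $t$ along a suitable completion $\mu$ of the maximally stretched lamination, the new structure $g'$ satisfies $K(g',h)=K(g,h)-t$; this is the nontrivial step, and it uses the structure of the ratio-maximizing lamination (chain recurrence, and the analysis of how lengths of \emph{all} laminations, not just $\mu_0$, evolve under the stretch). Repeating, one obtains a geodesic for $K$ from $g$ to $h$ that is a finite concatenation of stretch segments; composing the associated stretch maps yields a $\kappa$-Lipschitz homeomorphism from $(S,g)$ to $(S,h)$, giving $L\le K$. The essential missing idea in your proposal is this \emph{decrease of $K$-distance under a short stretch}, which replaces the impossible one-shot construction you attempt.
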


The two definitions of the Thurston metric are reminiscent of two definitions of the Teichm\"uller metric, which we recall briefly. Here, the Teichm\"uller space of the surface is considered as the space of isotopy classes of conformal structures on the base surface $S$. The equivalence of this definition with the definition that we used before in this paper follows from the uniformization theorem, which assigns to each conformal structure a (canonical) complete hyperbolic structure whose underlying conformal structure is the given one.

We give a first definition of the Teichm\"uller distance on Teichm\"uller space which is analogous to the $L$ version of the Thurston metric.

Given two conformal structures $G$ and $H$ on the surface $S$, we define their distance by the formula
\[\frac{1}{2}\inf_f D(f)
\] 
where the infimum is taken over all quasiconformal homeomorphisms $f : G \to H$
that are in the isotopy class of the identity and where $D(f)$ is the quasiconformal dilatation of $f$.
Making the two conformal structures $G$ and $H$ vary in their hyperbolic classes (that is, considering them as elements of the Teichm\"uller space) does not change the value of this distance between them and leads to a metric on Teichm\"uller space, which is the Teichm\"uller metric. The definition of this metric is due to Teichm\"uller \cite{Teichmuller}.

 The second formula for the Teichm\"uller distance is due to Kerckhoff \cite{Kerckhoff}, and it uses the notion of extremal length of a homotopy class of simple closed curves on a Riemann surface. It is the analogue of the $K$ version of the Teichm\"uller metric. Kerckhoff's formula says that the distance between two conformal structures $G$ and $H$ on $S$ is given by 
 \[\frac{1}{2}\log\sup_{\gamma}\frac{\mathrm{Ext}_H(\gamma)}{\mathrm{Ext}_G(\gamma)}
 \]
 where the sup is taken over the set of homotopy classes $\gamma$  of simple closed curves on $S$ and where for each $\gamma$, $\mathrm{Ext}_H(\gamma)$ denotes its extremal length with respect to the conformal structure $H$.

 Despite the formal analogy between the definitions, the  techniques that are used in the development of the theories that they lead to are different. For the Thurston metric, the techniques are purely geometric (distance geometry in the hyperbolic plane, properties of horocycles, of hypercycles, hyperbolic trigonometry, etc.) whereas for the Teichmüller metric, they are mainly analytical. Furthermore, Teichm\"uller's distance is symmetric whereas Thurston's distance is not. In the next sections, we shall point out at a few places some comparison between results that hold for these metrics.

  Theorem \ref{K=L} is one of the major results of the paper \cite{Thurston1986}. Its proof involves in a strong way the existence of stretch lines, which are geodesics for the metric $K$ (or $L$) which we shall review below and which are used at many places in the development of this theory.

There is a useful alternative version of the definition of the metric $K(g,h)$ in which the supremum is taken over the space $\mathcal{PML}$ of projective geodesic laminations. Before presenting it, we make a digression on the notion of length of measured geodesic laminations. 

The notion of length of a  weighted simple closed geodesic can be extended in a natural way to a notion of length of a measured geodesic lamination. One way of defining the length of a geodesic laminatio is the following. 

Consider a measured geodesic lamination $\mu$ on a hyperbolic surface. We start by  covering its support by a family of rectangles (that is, topological discs embedded in the surface with four distinguished points on their boundaries) $R_1,\ldots, R_k$ with disjoint interiors, such that each leaf of $\mu$ intersects a rectangle in geodesic segments which all join the same pair of opposite sides of this rectangle. We call these sides the \emph{vertical} sides of the given rectangle. In this way, each time a leaf of $\mu$ enters some rectangle $R_i$ ($0\leq i\leq k$) from a vertical side, it exits the same rectangle from the opposite vertical side. 

The length of the measured geodesic lamination $\mu$ is then defined as the sum over the set of all the rectangles that cover $\mu$, of the \emph{mass of $\mu$} in such a rectangle. Here, the mass of $\mu$ in a rectangle is the integral over a vertical side of that rectangle (with respect of the transverse measure induced by the geodesic lamination on that segment), of the function which assigns to each point the length of the segment in which the lamination $\mu$ traverses the rectangle, passing through this point.

From the invariance of the transverse measure of a lamination, it follows that the length of a measured geodesic lamination, defined in this way, does not depend on the choice of the set of rectangles that cover the support of $\mu$. 

The length of a measured geodesic lamination with respect to a hyperbolic structure $g$ is denoted by $l_g(\mu)$.

We reviewed in \S \ref{ss:laminations} the canonical injection of the set $\mathbb{R}_+\times \mathcal{S}$ of weighted simple closed geodesics in the space $\mathcal{ML}$ of measured geodesic laminations.  With the definition of length of a measured lamination that we just recalled, when $\mu$ is a simple closed geodesic, $l_g(\mu)$ is the usual length of the curve $\mu$ with respect to the hyperbolic metric $g$. We shall une the following homogeneity property that follows easily from the definitions:
\begin{proposition}\label{homogeneity}
If $k\mu$ denotes a simple closed geodesic weighted by a positive real number $k$, then $l_g(k\mu)= kl_g(\mu)$. 
\end{proposition}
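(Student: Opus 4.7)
The plan is to unpack the definition of $l_g$ given just above the statement and to check that, term by term, it scales linearly under multiplication of the transverse measure by a positive constant. Concretely, choose a finite cover of the support of $\mu$ (which, for a simple closed geodesic, is just the curve itself) by rectangles $R_1,\dots,R_k$ with disjoint interiors, such that each leaf of $\mu$ enters and leaves $R_i$ through opposite vertical sides $v_i$. The very same family of rectangles covers the support of the weighted curve $k\mu$, because $k\mu$ has the same underlying geodesic as $\mu$; only the transverse measure is altered.

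Next, I would compare masses rectangle by rectangle. For the lamination $\mu$ the mass in $R_i$ is
\[
m_i(\mu)=\int_{v_i}\ell_i(x)\,d\mu(x),
\]
where $\ell_i(x)$ is the length of the geodesic segment of $\mu$ that traverses $R_i$ through the point $x\in v_i$, and $d\mu$ denotes the transverse measure of $\mu$ restricted to $v_i$. The function $\ell_i$ depends only on the underlying geodesic and on the rectangle $R_i$, so it is unchanged when we replace $\mu$ by $k\mu$. On the other hand, by the very definition of the weighted simple closed geodesic $k\mu$, its transverse measure is $k$ times the counting (i.e.\ transverse) measure of $\mu$, so $d(k\mu)=k\,d\mu$ on each transversal. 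Hence
\[
m_i(k\mu)=\int_{v_i}\ell_i(x)\,d(k\mu)(x)=k\int_{v_i}\ell_i(x)\,d\mu(x)=k\,m_i(\mu).
\]

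Summing over $i$ and using the definition $l_g(\nu)=\sum_i m_i(\nu)$, we obtain $l_g(k\mu)=k\,l_g(\mu)$, as claimed. No serious obstacle arises here: the only point worth a moment's care is to observe that the independence of the definition on the choice of covering (established in the preceding paragraph of the paper) guarantees that the identity $m_i(k\mu)=k\,m_i(\mu)$ passes to the sum without ambiguity, and that the function $\ell_i$ in the integrand genuinely depends on the support rather than on the transverse measure, so that rescaling the latter factors out of the integral.
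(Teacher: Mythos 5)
Your proof is correct and is exactly the argument the paper has in mind when it says the proposition ``follows easily from the definitions'': the length function in the integrand depends only on the support, while the transverse measure scales by $k$, so each rectangle's mass and hence the total length scales by $k$. (Only cosmetic remark: you reuse the letter $k$ both for the weight and for the number of rectangles, which is worth avoiding.)
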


Proposition \ref{prop:extension} says that the set of weighted simple closed geodesic is dense in the space $\mathcal{ML}(S)$ of measured geodesic laminations equipped with the measure topology. The notion of length function of a measured lamination is the unique continuous extension of the notion of length function of a weighted simple closed geodesic. 

Let us note finally the following characterization of the length of a maximal measured geodesic lamination, in terms of its intersection number with its horocyclic measured foliation:
\begin{proposition} For any maximal measured geodesic lamination $\mu$ on a hyperbolic surface $g$, we have 
$l_g(\mu)=i(\mu, F_{\mu}(g))$.
\end{proposition}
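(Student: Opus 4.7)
The plan is to compute both sides from the definitions using a common family of rectangles adapted to $\mu$, and to reduce the identity to the single fact that the transverse measure of $F_\mu(g)$ is characterized by coinciding with hyperbolic arc length on every edge of an ideal triangle of $S\setminus\mu$ --- equivalently, on every subarc of a leaf of $\mu$.

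First, I would choose a finite cover of the support of $\mu$ by rectangles $R_1,\dots,R_k$ with pairwise disjoint interiors, each foliated by the $\mu$-leaves that cross it, joining one pair of opposite (``vertical'') sides $v_i, v_i'$, exactly as in the definition of $l_g(\mu)$ recalled just before the proposition. By shrinking these rectangles if necessary, I may further assume that each $R_i$ is contained in the foliated part of $F_\mu(g)$; this is possible because the non-foliated central regions of the ideal triangles of $S\setminus\mu$ are open subsets disjoint from $\mu$, hence at positive distance from $\mu$. Inside such an $R_i$ the leaves of $F_\mu(g)$ (arcs of horocycles) run from the top to the bottom of $R_i$ and are transverse to the leaves of $\mu$, so $R_i$ is a genuine product ``box'' for the pair $(\mu, F_\mu(g))$.

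Second, in each rectangle $R_i$ I would compare two integrals. The contribution of $R_i$ to $l_g(\mu)$ is, by definition,
\[
\int_{v_i} \ell_i(x)\, d\mu(x),
\]
where $\ell_i(x)$ is the hyperbolic length of the $\mu$-leaf-segment through $x$ inside $R_i$. The contribution of $R_i$ to $i(\mu,F_\mu(g))$, obtained from the product-measure description of an intersection number inside a box in which one measured object is transverse to the other, is
\[
\int_{v_i} \tau_i(x)\, d\mu(x),
\]
where $\tau_i(x)$ is the $F_\mu(g)$-transverse measure of that same $\mu$-leaf-segment. The characterizing property of $F_\mu(g)$ recalled above gives $\tau_i(x)=\ell_i(x)$ pointwise, so the two integrals are equal. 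Summing over $i$ yields $l_g(\mu)=i(\mu,F_\mu(g))$.

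The main obstacle I anticipate is the rigorous identification of $\sum_i \int_{v_i} \tau_i(x)\, d\mu(x)$ with $i(\mu,F_\mu(g))$ in the sense used elsewhere in the survey --- namely, via the function-space definition involving quasi-transverse representatives of simple closed curves (\S \ref{ss:quasi-transverse}). One way to bypass this is to adopt the box-Fubini formula above as a working definition of $i(\mu, F_\mu(g))$, and then to argue, as is standard in the theory of measured foliations and laminations, that this agrees with the intersection pairing obtained by approximating $\mu$ by weighted simple closed geodesics $k\gamma$; for such $\mu$, homogeneity (Proposition \ref{homogeneity}) reduces the identity to the statement that $kl_g(\gamma)$ equals $k$ times the sum of the $F_\mu(g)$-transverse measures of the arcs of $\gamma$, which is transparent. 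Continuity of both sides in $\mu$ on $\mathcal{ML}$ then does the rest.
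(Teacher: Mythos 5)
The paper does not actually prove this proposition --- it only points to \cite[Lemma 3.13]{1991a} --- so there is no in-text argument to measure yours against; your box-by-box computation is the natural approach and is very likely close to the cited one. The skeleton is right, but the two steps that carry all the content both need repair. First, the claim that the $F_\mu(g)$-transverse measure equals hyperbolic arc length ``on every subarc of a leaf of $\mu$'' is not the same as the defining property, which concerns only the \emph{edges} of the complementary ideal triangles, i.e.\ the boundary leaves. For a maximal lamination carrying a transverse measure of full support, almost no leaf is a boundary leaf. The equivalence you assert does hold, but it requires an argument: since $\mu$ has measure zero, the boundary leaves are dense in $\mu$ in every flow box; within a single ideal triangle the two edges asymptotic to a common ideal vertex cut off segments of equal length between two horocycles centred at that vertex, so all boundary-leaf segments crossing a flow box bounded by two leaves of $F_\mu(g)$ have the same length, equal to the transverse measure of the box; continuity of arc length then propagates $\tau_i(x)=\ell_i(x)$ to every leaf.

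Second, and more seriously, the resolution you propose for your acknowledged ``main obstacle'' contains a false step. For a weighted simple closed geodesic $k\gamma$ it is \emph{not} transparent --- indeed it is false in general --- that $l_g(\gamma)$ equals the total $F_\mu(g)$-transverse measure of the geodesic representative of $\gamma$: a closed geodesic other than a leaf of $\mu$ meets the horocyclic leaves non-orthogonally and crosses the non-foliated regions, so its transverse measure is strictly smaller than its length. That deficit is precisely what Lemma~\ref{lem:proj} and Proposition~\ref{lemma:fund2} quantify: one only has $i(F_\mu(g),\gamma)\leq l_g(\gamma)\leq i(F_\mu(g),\gamma)+4\,i(\gamma,\mu)$ (and the analogous estimate with a constant $C(\gamma)$ controlled by $i(\gamma,\mu)$ in the general Fundamental Lemma~\ref{lemma:fund}). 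The repair is to use this two-sided estimate rather than an exact equality: if $k_n\gamma_n\to\mu$ in $\mathcal{ML}$, then $k_n\,i(\gamma_n,\mu)\to i(\mu,\mu)=0$, so the error term vanishes after rescaling, and continuity of $l_g(\cdot)$ and of $i(\cdot,F_\mu(g))$ on $\mathcal{ML}$ yields $l_g(\mu)=i(\mu,F_\mu(g))$ in the limit. Note that once you argue this way the box-Fubini computation becomes superfluous. If instead you want to keep the direct computation, what must be invoked in place of your approximation argument is the lamination--foliation analogue of the FLP result that (quasi-)transverse representatives realize the intersection number, applied to the pair $(\mu,F_\mu(g))$, every leaf of $\mu$ being orthogonal to $F_\mu(g)$ and the pair bounding no bigon; that is a genuine lemma, not a bookkeeping step.
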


 The proposition is proved in \cite[Lemma 3.13]{1991a}. It is generalized in \cite[p. 383]{T4} to the case of a maximal geodesic lamination with non-emtpy stump (see \S \ref{s:pairs} below for the definition of the stump of a geodesic lamination).
 
Let us return now to the definition of Thurston's asymmetric metric $K$.

It follows from the homogeneity property (Proposition \ref{homogeneity}) that  in the above definition of $K(g,h)$, the sup can be taken over the projectivized space $\mathcal{PML}$, instead of over the set of$\gamma$ in $\mathcal{S}$. In other words we also have
\[
K(g,h)=\log\sup_{[\mu]\in\mathcal{PML}}\frac{l_{h}(\mu)}{l_{g}(\mu)}
\]
where it is understood that in computing lengths, one takes the same representative $\mu$ of the projective class $[\mu]$ in the numerator and the denominator. 

In this new version of the definition of the function $K(g,h)$, taking the supremum over the space $\mathcal{PML}$ (instead of taking it over the set $\mathcal{S}$)
 has the advantage that, since space $\mathcal{PML}$ is compact, the supremum is realized. The existence of a projective measured lamination realizing this supremum is an important part in Thurston's development of his theory, and in fact, it is used in his proof of Theorem \ref{K=L}.

In the rest of this section, we review some properties of Thurston's asymmetric metric. We shall talk about its geodesics, and we start by recalling the definition of a geodesic in the case of an asymmetric metric, which is similar to that of a geodesic in a usual metric space.

A \emph{geodesic} in a space $(X,d)$ equipped with a distance function which might be asymmetric is defined as a map $h:I\to X$ where $I$ is an interval of $\mathbb{R}$, such that for every triple $x_1, x_2, x_3$ in $I$ satisfying $x_1\leq x_2\leq x_3$ we have $d(h(x_1),h(x_3))=d(h(x_1),h(x_2))+d(h(x_2),h(x_3))$. Notice that in the case where the distance function $d$ is asymmetric, one has to be careful about the order of the arguments when computing distances between two points. A map obtained from $h$ by reversing the time direction might not be a geodesic for $d$. In fact, most of the geodesics of Thurston's metric do not remain geodesic if the time direction is reversed.

\subsection{The topology and the metric properties of Thurston's metric}

There are several ways in which a non-symmetric distance function $d$ on a space $X$ can be symmetrized. Two common ways are:
\begin{itemize}
\item the \emph{arithmetic symmetrization} defined by  \[d_{\mathrm{arith}}(x,y)= \frac{1}{2}\left(d(x,y)+d(y,x)\right);\]

\item the \emph{max symmetrization} defined by 
\[d_{\mathrm{max}}(x,y)= \max\left(d(x,y),d(y,x)\right).\]
\end{itemize}

Both symmetrizations are genuine metrics, that is, they satisfy all the axioms of a metric in a usual sense. Furthermore, it is easy to see these two metrics are equivalent in the sense that there exist constants $C$ and $C'$ such that for all $x$ and $y$ in $X$, we have 
\[\frac{1}{C}d_{\mathrm{arith}}(x,y)\leq d_{\mathrm{max}}(x,y)\leq C' d_{\mathrm{arith}}(x,y)\] 
(One can take $C=1$ and $C'=2$). In particular, the two metrics induce the same topology on $X$.

By definition, the topology induced by an asymmetric metric is the one induced by (either of) these two symmetrizations. Herbert Busemann, who  studied extensively non-symmetric metrics, used this definition for the topology induced by such a metric provided it satisfies a condition of the form
\[d(x,x_n)\to 0\ \iff \ d(x_n,x)\to 0,\]
 see \cite[p. 2]{Busemann}. This condition is satisfied by Thurston's asymmetric metric, see Theorem \ref{th:4-2007a} below.

With this definition, it is easy to see that the topology induced by Thurston's asymmetric metric on Teichm\"uller space  coincides with the one induced by the  embedding of this space into the function space $\mathbb{R}_{+}^{\mathcal{S}}$ which we recalled in \S \ref{ss:T}, and therefore with the Teichm\"uller
metric. We shall refer to this topology as the ``usual topology" of Teichm\"uller space. 

The paper \cite{2007a} is dedicated to the metric properties of Thurston's metric. We review some results of that paper:

\begin{theorem}[Theorem 2 of \cite{2007a}]
For any sequence $g_n$ of elements in $\mathcal{T}(S)$, the following three properties are equivalent:

\begin{enumerate}
\item $g_n\to\infty$ in $\mathcal{T}(S)$, that is, this sequence leaves any compact set for $n$ large enough;
\item for all $h$ in $\mathcal{T}(S)$, we have $L(h,g_n)\to\infty$;
\item for all $h$ in $\mathcal{T}(S)$, we have $L(g_n,h)\to\infty$.
\end{enumerate}
\end{theorem}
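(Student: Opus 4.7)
My plan is first to dispose of the easy implications $(2)\Rightarrow(1)$ and $(3)\Rightarrow(1)$ by contradiction, then to prove $(1)\Rightarrow(2)$ directly via the convergence criterion at Thurston's boundary, and finally to treat $(1)\Rightarrow(3)$ by a Mumford-compactness dichotomy that reduces this case to the already-proved forward direction, applied to a mapping-class-translated sequence. For $(2)\Rightarrow(1)$ and $(3)\Rightarrow(1)$ I argue by contradiction: if $g_n$ does not leave every compact set, then after extraction $g_n$ converges to some $g_*\in\mathcal{T}(S)$ in the usual topology; since this topology coincides with the one induced by the symmetrized Thurston metric, both $L(g_*,g_n)\to 0$ and $L(g_n,g_*)\to 0$ along the subsequence, contradicting $(2)$ or $(3)$ with $h=g_*$.

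For $(1)\Rightarrow(2)$, I suppose $g_n\to\infty$ and, by contradiction, $L(h,g_n)\leq M$ along a subsequence (which I relabel). Thurston's equality $L=K$ then yields
\[l_{g_n}(\alpha)\leq e^{M}l_h(\alpha)\qquad\text{for every }\alpha\in\mathcal{S}.\]
By compactness of $\mathcal{T}(S)\cup\mathcal{PML}$ I extract a further subsequence converging to some $[\lambda]\in\mathcal{PML}$, and the convergence criterion then supplies a representative $\lambda$ and scalars $x_n\to 0$ with $x_n l_{g_n}(\alpha)\to i(\lambda,\alpha)$. Multiplying the displayed inequality by $x_n$ and letting $n\to\infty$ gives $i(\lambda,\alpha)=0$ for every $\alpha\in\mathcal{S}$, which forces $\lambda=0$ and contradicts $[\lambda]\in\mathcal{PML}$.

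For $(1)\Rightarrow(3)$, suppose $g_n\to\infty$ and, by contradiction, $L(g_n,h)\leq M$ along a subsequence. Now $L=K$ gives only the reversed inequality $l_{g_n}(\alpha)\geq e^{-M}l_h(\alpha)$ for all $\alpha\in\mathcal{S}$; multiplying by $x_n\to 0$ produces merely the trivial bound $i(\lambda,\alpha)\geq 0$, so the previous argument no longer works. The plan is instead to produce varying curves $\alpha_n\in\mathcal{S}$ for which $l_h(\alpha_n)/l_{g_n}(\alpha_n)\to\infty$, which will contradict the hypothesis. The lower bound forces $\mathrm{sys}(g_n)\geq e^{-M}\mathrm{sys}(h)>0$, so Mumford's compactness theorem lets me write $g_n=\phi_n\cdot h_n$ with $h_n$ in a compact subset $K\subset\mathcal{T}(S)$ and $\phi_n\in\mathrm{MCG}(S)$. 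Since $\mathrm{MCG}(S)$ acts properly discontinuously on $\mathcal{T}(S)$ and $g_n\to\infty$, the elements $\phi_n$ must leave every finite subset of $\mathrm{MCG}(S)$, so $\phi_n^{-1}\cdot h\to\infty$ in $\mathcal{T}(S)$. Applying the convergence criterion to this new sequence I obtain a boundary point $[\mu]\in\mathcal{PML}$, and picking $\beta\in\mathcal{S}$ with $i(\mu,\beta)>0$ yields $l_{\phi_n^{-1}\cdot h}(\beta)\to\infty$. Setting $\alpha_n=\phi_n\cdot\beta$, mapping-class invariance of length gives $l_{g_n}(\alpha_n)=l_{h_n}(\beta)$, bounded since $h_n\in K$, while $l_h(\alpha_n)=l_{\phi_n^{-1}\cdot h}(\beta)\to\infty$, providing the desired curves and the contradiction. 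The hard part here, and the main obstacle of the whole argument, is the asymmetry of $L$: the hypothesis $L(g_n,h)\leq M$ controls the length spectrum from below rather than from above, so the Thurston boundary, which detects laminations whose lengths grow, is not directly visible, and the gap must be bridged by Mumford compactness together with the proper discontinuous action of the mapping class group.
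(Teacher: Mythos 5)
Your argument is correct. Note first that the survey itself does not prove this theorem but only cites \cite{2007a}, so I am judging the proposal on its own terms. The implications $(2)\Rightarrow(1)$ and $(3)\Rightarrow(1)$ via extraction of a convergent subsequence and the equivalence of $L$-convergence with the usual topology (Theorem \ref{th:4-2007a}) are fine. The direction $(1)\Rightarrow(2)$ is a clean and standard use of the compactness of $\mathcal{T}\cup\mathcal{PML}$ together with the convergence criterion: the bound $l_{g_n}(\alpha)\leq e^{M}l_h(\alpha)$, rescaled by $x_n\to 0$, forces the limiting lamination to have identically vanishing intersection function, which is impossible. You correctly identify $(1)\Rightarrow(3)$ as the genuinely asymmetric direction, and your treatment of it is valid: the lower bound on the length spectrum gives a uniform lower bound on the systole, Mumford compactness produces the decomposition $g_n=\phi_n\cdot h_n$ with $h_n$ in a compact set, the divergence of $g_n$ forces $\phi_n$ to leave every finite subset of $\mathrm{MCG}(S)$, proper discontinuity then makes $\phi_n^{-1}\cdot h$ diverge, and equivariance of length functions ($l_{\phi\cdot g}(\phi\cdot\gamma)=l_g(\gamma)$) converts a curve $\beta$ with $i(\mu,\beta)>0$ into test curves $\alpha_n=\phi_n\cdot\beta$ whose $h$-length blows up while their $g_n$-length stays bounded, contradicting $K(g_n,h)\leq L(g_n,h)\leq M$. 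This mapping-class-translation trick is a somewhat different mechanism from the more direct length-spectrum and thin-part estimates used in \cite{2007a}; what it buys is that the two hard directions become formally symmetric (the second is reduced to the first applied to the translated sequence $\phi_n^{-1}\cdot h$), at the cost of invoking Mumford compactness and the proper discontinuity of the mapping class group action, which the more hands-on route avoids. Every step you invoke (injectivity of the intersection-function embedding of $\mathcal{ML}$, continuity of length functions on compacta, $K\leq L$) is standard and available in the surrounding text, so I see no gap.
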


\begin{theorem}[Theorem 4 of \cite{2007a}, see also \cite{Liu}]\label{th:4-2007a}
For any sequence $g_n$ in $\mathcal{T}(S)$ and for any element $g$ in this space, the following three properties are equivalent:

\begin{enumerate}
\item $g_n\to g$ in the usual topology;
\item  $L(g,g_n)\to 0$;
\item $L(g_n,g)\to 0$.
\end{enumerate}
\end{theorem}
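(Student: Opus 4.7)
The plan is to prove \emph{(1)}$\Rightarrow$\emph{(2)} and \emph{(1)}$\Rightarrow$\emph{(3)} directly by using the lamination-based formula for $L$, together with the compactness of $\mathcal{PML}$, and then to derive \emph{(2)}$\Rightarrow$\emph{(1)} and \emph{(3)}$\Rightarrow$\emph{(1)} by a subsequence argument that invokes the preceding theorem (Theorem 2 of \cite{2007a}) and closes the loop via the triangle inequality.

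For the forward implications, I would combine Theorem \ref{K=L} with the extension of the length functional from $\mathbb{R}_+\times\mathcal{S}$ to $\mathcal{ML}$ (Proposition \ref{prop:extension}) to rewrite
\[L(g,h)=\log\sup_{[\mu]\in\mathcal{PML}}\frac{l_h(\mu)}{l_g(\mu)}.\]
Fix, for each class $[\mu]\in\mathcal{PML}$, the representative $\mu$ normalized by $l_g(\mu)=1$; the resulting section $\Sigma_g\subset\mathcal{ML}$ is compact and homeomorphic to $\mathcal{PML}$. The evaluation $(h,\mu)\mapsto l_h(\mu)$ is jointly continuous on $\mathcal{T}(S)\times\mathcal{ML}$, where $\mathcal{T}(S)$ carries the usual topology from \S \ref{ss:T} and $\mathcal{ML}$ carries the measure topology from \S \ref{ss:laminations}. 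Restricting to the compact subset $\Sigma_g$ and using uniform continuity in a compact neighborhood of $g$, one obtains that $g_n\to g$ in the usual topology forces $l_{g_n}(\mu)\to 1$ uniformly in $\mu\in\Sigma_g$, and that $l_{g_n}$ is uniformly bounded away from $0$ on $\Sigma_g$ for large $n$. Consequently, both
\[\sup_{\mu\in\Sigma_g}l_{g_n}(\mu)\longrightarrow 1 \quad\text{and}\quad \sup_{\mu\in\Sigma_g}\frac{1}{l_{g_n}(\mu)}\longrightarrow 1,\]
which upon taking logarithms yield \emph{(2)} and \emph{(3)} simultaneously.

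For the reverse implications, I argue by contradiction; the argument for \emph{(3)}$\Rightarrow$\emph{(1)} is symmetric to that for \emph{(2)}$\Rightarrow$\emph{(1)}, so I only describe the latter. Suppose $L(g,g_n)\to 0$ but $g_n\not\to g$ in the usual topology; extract a subsequence $g_{n_k}$ staying outside some fixed neighborhood of $g$. Since $L(g,g_{n_k})$ is bounded, Theorem 2 of \cite{2007a} prevents $g_{n_k}$ from tending to infinity in $\mathcal{T}(S)$, so a further subsequence $g_{n_{k_j}}$ converges in the usual topology to some $g^*\in\mathcal{T}(S)$. Applying the already-established implication \emph{(1)}$\Rightarrow$\emph{(3)} to the convergence $g_{n_{k_j}}\to g^*$ gives $L(g_{n_{k_j}},g^*)\to 0$, and the triangle inequality produces
\[L(g,g^*)\leq L(g,g_{n_{k_j}})+L(g_{n_{k_j}},g^*)\longrightarrow 0,\]
so $L(g,g^*)=0$ and therefore $g^*=g$, contradicting the choice of the extracted subsequence.

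The main obstacle is the continuity-plus-compactness step at the start: one must genuinely know that the supremum defining $L$ is attained on the compact space $\mathcal{PML}$ and that $(h,\mu)\mapsto l_h(\mu)$ is jointly continuous, so that pointwise convergence on $\mathcal{S}$ upgrades to the required uniform control on the normalized section $\Sigma_g$. Once this is granted, the asymmetry of $L$ poses no real difficulty because the two one-sided conclusions \emph{(2)} and \emph{(3)} are produced in parallel by the same uniform estimate, and the reverse implications then reduce to a clean subsequence argument whose only nontrivial ingredient is the preceding theorem guaranteeing non-escape to infinity.
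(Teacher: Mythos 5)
The survey does not reproduce a proof of this theorem; it is quoted from \cite{2007a}, so there is no in-paper argument to compare against line by line. Your proposal is, however, a correct and complete argument, and it is in the spirit of the cited proof: the forward implications come from the identity $L=K$ together with the realization of $K$ as a supremum over the compact space $\mathcal{PML}$, and the backward implications from a properness/subsequence argument. Two points deserve emphasis. First, the entire forward direction rests on the joint continuity of the length pairing $\mathcal{T}(S)\times\mathcal{ML}\to\mathbb{R}$, $(h,\mu)\mapsto l_h(\mu)$; this is true and standard (it is due to Thurston, and proofs can be found via train-track coordinates or geodesic currents), but it is nowhere stated in this survey, so you should cite it explicitly rather than treat it as folklore --- it is genuinely the only nontrivial analytic input, since separate continuity in each variable would not upgrade pointwise convergence on $\mathcal{S}$ to the uniform control you need on the normalized section $\Sigma_g$. (Note also that you do not actually need the supremum to be \emph{attained}; uniform convergence of $l_{g_n}$ to the constant $1$ on the compact set $\Sigma_g$ already forces both suprema to tend to $1$.) Second, your subsequence argument is correctly arranged to avoid circularity: you invoke Theorem 2 of \cite{2007a} (which precedes the present statement) rather than the properness of closed balls (Proposition \ref{prop:5-2007a}), whose placement after this theorem might otherwise raise a suspicion of circular reasoning; and the final step $L(g,g^*)=0\Rightarrow g^*=g$ is licensed by Thurston's separation property recalled in \S\ref{ss:Thurston}. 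With the continuity reference supplied, the proof stands.
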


In a space equipped with an asymmetric metric, there are two sorts of (open or closed) balls, namely, the left and the right balls. Let us recall the definitions. We restrict the notation to the case of $\mathcal{T}(S)$ equipped with the Thurston metric, since this is the case of interest in this survey.

Let $g$ be a point in $\mathcal{T}(S)$ and $R$ a positive number.
The \emph{left closed ball of center $g$ and radius $R$} is defined as
\[\{h\in\mathcal{T} \ \vert \ L(g,h)\leq R\}.
\]
The 
 \emph{right closed ball of center $g$ and radius $R$} is defined as
\[\{h\in\mathcal{T} \ \vert \ L(h,g)\leq R\}.
\]

   \emph{Left} and \emph{right open balls} are defined in the same way, by replacing the large inequalities by strict inequalities, and  \emph{left} and \emph{right spheres} are obtained by taking equalities instead of inequalities. 
   
   We note that for a general asymmetric metric, it is not always true that the closure of a left (respectively right) open ball is the union of such a ball with the left (respectively right) sphere centered at the same point and with the same radius. Busemann, in \cite[p. 3]{Busemann} gives conditions for this to hold.

\begin{proposition}[Proposition 5 of \cite{2007a}]\label{prop:5-2007a}
For any $g\in\mathcal{T}$ and $R>0$, the left and right closed ball of center $g$ and radius $R$ in $\mathcal{T}$ are compact for the usual topology.
\end{proposition}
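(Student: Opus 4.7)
The plan is to reduce the proposition to Theorem 2 (the characterization of sequences tending to infinity in $\mathcal{T}$), together with a direct closedness argument using the length spectrum embedding. The strategy is: show each of the two closed balls is closed, then show that any sequence contained in it has a convergent subsequence (via Theorem 2), then use sequential compactness in the metrizable space $(\mathcal{T},d_{\mathrm{max}})$.

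\smallskip

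\textbf{Step 1 (Closedness).} I would use the length spectrum embedding $\mathcal{T}\hookrightarrow \mathbb{R}_+^{\mathcal{S}}$ from \S\ref{ss:T}, which defines the usual topology. If $h_n\to h$ in $\mathcal{T}$, then $l_{h_n}(\gamma)\to l_h(\gamma)$ for every $\gamma\in\mathcal{S}$. Recast the two balls as
\[
B_L(g,R)=\{h\tq l_h(\gamma)\le e^R l_g(\gamma)\ \forall \gamma\in\mathcal{S}\},\qquad B_R(g,R)=\{h\tq l_g(\gamma)\le e^R l_h(\gamma)\ \forall \gamma\in\mathcal{S}\},
\]
using Theorem \ref{K=L} to identify $L$ with $K$. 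Since each defining inequality is preserved under pointwise limits in $\gamma$, and since the supremum of the ratios is $\le R/\log$ iff each individual ratio is, taking limits along $h_n\to h$ yields the corresponding bound for $h$. Hence both balls are closed in the usual topology.

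\smallskip

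\textbf{Step 2 (Sequential relative compactness).} Let $h_n\in B_L(g,R)$ be any sequence, so $L(g,h_n)\le R$ for all $n$. Suppose $h_n\to\infty$ in $\mathcal{T}$ in the sense of Theorem 2 (i.e., $h_n$ eventually leaves every compact set). Then Theorem 2 gives $L(g,h_n)\to\infty$, contradicting the bound $L(g,h_n)\le R$. Therefore the negation of ``$h_n\to\infty$'' holds: there exists a compact subset $K\subset\mathcal{T}$ and an infinite subsequence with $h_{n_k}\in K$. Compactness of $K$ in the usual (metrizable) topology of $\mathcal{T}$ yields a further subsequence converging to some $h_\infty\in\mathcal{T}$. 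By Step 1, $h_\infty\in B_L(g,R)$. The identical argument for the right ball uses the equivalence $g_n\to\infty \iff L(g_n,h)\to\infty$ from Theorem 2.

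\smallskip

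\textbf{Step 3 (Sequential compactness implies compactness).} Since $L$ and the Teichm\"uller metric induce the same topology on $\mathcal{T}$ (as noted in \S\ref{ss:Thurston}), the space $\mathcal{T}$ is metrizable. In a metrizable space, sequential compactness is equivalent to compactness, so both $B_L(g,R)$ and $B_R(g,R)$ are compact.

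\smallskip

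\textbf{Main obstacle.} The substance of the argument is entirely packaged inside Theorem 2 (which is invoked as a black box). The only genuine care needed here is the closedness step: one should not be tempted to argue continuity of $h\mapsto L(g,h)$ via the raw definition $L=\log\sup_{\gamma\in\mathcal{S}}l_h(\gamma)/l_g(\gamma)$, since a supremum of continuous functions is only lower semicontinuous in general. Working instead with the individual inequalities $l_h(\gamma)\le e^R l_g(\gamma)$ (which are each closed conditions) circumvents this issue cleanly; alternatively, one could pass through $\mathcal{PML}$ and use compactness of the projective lamination space together with joint continuity of $(h,\mu)\mapsto l_h(\mu)$.
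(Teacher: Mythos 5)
Your proof is correct and follows essentially the route the paper intends: the survey states Proposition 5 as a consequence of the results of \cite{2007a}, and deducing it from Theorem 2 (no divergent sequence can stay in a ball) together with closedness of the sublevel sets of the length-ratio inequalities is exactly the standard derivation. One small remark: the ``obstacle'' you flag is not really one, since lower semicontinuity of $h\mapsto L(g,h)$ (as a supremum of continuous functions) is already precisely what makes the sublevel set $\{h \tq L(g,h)\leq R\}$ closed; your argument via the individual inequalities $l_h(\gamma)\leq e^R l_g(\gamma)$ is just the proof of that fact.
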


The following is a reformulation of Proposition \ref{prop:5-2007a}:
\begin{proposition}[Proposition 8 of \cite{2007a}]\label{prop:compact}
Teichm\"uller space equipped with Thurston's asymmetric metric is proper, that is, left and right  open balls are compact with respect to the usual topology.
\end{proposition}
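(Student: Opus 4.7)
The plan is to derive Proposition \ref{prop:compact} directly from Proposition \ref{prop:5-2007a}, which already supplies the compactness of left and right closed balls; indeed the statement is labeled a reformulation, so the argument should be essentially a matter of unpacking definitions and combining them with the topological equivalences recorded in Theorem \ref{th:4-2007a}.

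First I would fix terminology. In the asymmetric setting, a natural definition of properness is that every left closed ball and every right closed ball is compact for the usual topology on $\mathcal{T}(S)$. With this convention, Proposition \ref{prop:5-2007a} is literally the assertion of properness, and there is nothing further to prove for the closed-ball interpretation.

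To handle open balls, I would argue that each such ball is relatively compact. Fix $g \in \mathcal{T}(S)$ and $R > 0$, and consider the left open ball $B^-_R(g) = \{h \in \mathcal{T}(S) \mid L(g,h) < R\}$. From Theorem \ref{th:4-2007a}, convergence $h_n \to h$ in the usual topology is equivalent to $L(h,h_n) \to 0$ and to $L(h_n,h) \to 0$; combined with the triangle inequality $L(g,h_n) \leq L(g,h) + L(h,h_n)$ and $L(g,h) \leq L(g,h_n) + L(h_n,h)$, this yields continuity of the function $h \mapsto L(g,h)$ with respect to the usual topology. Consequently the closure of $B^-_R(g)$ is contained in the left closed ball $\{h \in \mathcal{T}(S) \mid L(g,h) \leq R\}$, which is compact by Proposition \ref{prop:5-2007a}. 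Hence $\overline{B^-_R(g)}$ is a closed subset of a compact set, and therefore compact. The same argument applied to $h \mapsto L(h,g)$ handles the right open balls.

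There is really no hard step here: the whole content lies in Proposition \ref{prop:5-2007a}. The only point requiring a bit of care is the continuity of $L$ in each variable separately for the usual topology, which one might be tempted to take for granted but which genuinely requires Theorem \ref{th:4-2007a} since $L$ is not a symmetric metric and the usual topology is defined only after symmetrization.
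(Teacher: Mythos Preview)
Your proposal is correct and matches the paper's approach: the paper gives no separate proof at all, simply introducing the proposition as ``a reformulation of Proposition \ref{prop:5-2007a}.'' Your extra care in treating the literal ``open balls'' wording via relative compactness (using the continuity of $L$ in each variable, which indeed requires Theorem \ref{th:4-2007a}) is more than the paper itself supplies; the intended content is just that closed balls are compact, i.e., Proposition \ref{prop:5-2007a}.
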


  The following corollary is a consequence of Theorem \ref{th:4-2007a}:

\begin{corollary}[Corollary 7 of \cite{2007a}]
The following three topologies on $\mathcal{T}(S)$ coincide:
\begin{enumerate}
\item The usual topology, that is, the topology induced by the metric $L_{\mathrm{max}}$;
\item the topology generated by the set of left open balls of the asymmetric metric $L$; 
\item the topology generated by the set of right open balls of the asymmetric metric $L$; 
\end{enumerate}
\end{corollary}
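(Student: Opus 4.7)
The plan is to establish mutual inclusions between the three topologies, with Theorem \ref{th:4-2007a} providing the only nontrivial direction. First, by the triangle inequality for $L$, the collection of left open balls in fact forms a basis for the topology it generates: if $h \in B_L(g_1, R_1) \cap B_L(g_2, R_2)$, then $B_L(h, R) \subset B_L(g_1, R_1) \cap B_L(g_2, R_2)$ for $R = \min_i(R_i - L(g_i, h)) > 0$. The analogous statement holds on the right, so it suffices to test openness on left (resp.\ right) open balls.

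Next, I would verify that every left (resp.\ right) open ball is open in the usual topology induced by $L_{\max}$. Indeed, if $h \in B_L(g, R)$, then for any $h'$ we have $L(g, h') \leq L(g, h) + L(h, h') \leq L(g, h) + L_{\max}(h, h')$, so the $L_{\max}$-ball of radius $R - L(g, h) > 0$ around $h$ is contained in $B_L(g, R)$. The same argument, exchanging the arguments of $L$, shows that right open balls are usual-open. This yields the inclusions of the left topology and of the right topology into the usual topology.

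The key step is the converse: every usual-open set is open in the left topology, and symmetrically in the right. Let $U$ be usual-open and let $g \in U$. Suppose no $B_L(g, 1/n)$ lies inside $U$; picking $h_n \in B_L(g, 1/n) \setminus U$ for each $n$, one obtains $L(g, h_n) < 1/n \to 0$. Theorem \ref{th:4-2007a} then forces $h_n \to g$ in the usual topology, contradicting that $U$ is a usual-open neighborhood of $g$. Hence some $B_L(g, 1/n)$ is contained in $U$, so $U$ is a union of basic open sets of the left topology, i.e., left-open. The right case is identical, using the equivalence $L(h_n, g) \to 0 \iff h_n \to g$ in the usual topology from the same theorem.

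The only substantive ingredient is Theorem \ref{th:4-2007a}; the rest is a formal consequence of the triangle inequality for $L$ together with the definitions of the various ball-topologies. Accordingly, the main obstacle in the overall argument lies in Theorem \ref{th:4-2007a} itself, but that result is assumed at this stage.
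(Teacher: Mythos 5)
Your argument is correct and follows the same route the paper indicates: the corollary is deduced from Theorem \ref{th:4-2007a} via the sequential characterization of convergence, the remaining inclusions being routine consequences of the triangle inequality for $L$ and the fact that $L\leq L_{\mathrm{max}}$ in each argument. The only point worth making explicit is that sequences suffice here because the usual topology is metrizable (being induced by the genuine metric $L_{\mathrm{max}}$), which you implicitly use and which is indeed available.
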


Spaces with asymmetric metrics were studied extensively by Busemann in his book \cite{Busemann}. His theory works for asymmetric metrics $d$ satisfying certain axioms which are all satisfied by Thurston's asymmetric metric (this uses in particular Theorem \ref{th:4-2007a} above). For the spaces he considered, Busemann introduced a notion of completeness which can be formulated as one form of completeness for genuine metrics (\cite[Chapter 1]{Busemann}):

\begin{definition}[Completeness] \label{def:complete}  A space $X$ equipped with an asymmetric metric $d$ is said to be \emph{complete} if for any sequence $(x_n)$ satisfying the following property:

 if for any $\epsilon >0$ there exists $n_0$ such that for any $n\geq n_0$ and any $m\geq n$ we have
  $d(x_n,x_{n+m})<\epsilon$, 

then the sequence $(x_n)$ converges to a point in $X$.
\end{definition}

Furthermore, Busemann showed that if in an asymmetric metric space any two points can be joined by a geodesic whose length realizes the distance between these points, then the following are equivalent:

\begin{enumerate}
\item The metric is complete (in the sense of Definition \ref{def:complete});
\item left closed balls are compact.
\end{enumerate}

This equivalence is a form of the Hopf-Rinow theorem for asymmetric metrics, see \cite[p. 4]{Busemann}.

From this equivalence and Proposition \ref{prop:compact}, we deduce the following:
\begin{corollary}[Corollary 10 of  \cite{2007a}]
Thurston's asymmetric metric is complete in the sense of Busemann.
\end{corollary}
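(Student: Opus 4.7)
The plan is simply to invoke the version of the Hopf--Rinow theorem for asymmetric metrics recalled just above the statement, whose hypotheses have essentially already been verified in the preceding propositions. Recall that this theorem asserts that in an asymmetric metric space in which any two points can be joined by a geodesic whose length realizes the distance between them, Busemann-completeness (in the sense of Definition \ref{def:complete}) is equivalent to the compactness of all left closed balls.

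I would organize the proof in three steps. First, I would record that Teichm\"uller space equipped with Thurston's asymmetric metric $L$ satisfies the Busemann axioms to which this form of Hopf--Rinow applies; the essential analytic ingredient is Theorem \ref{th:4-2007a}, which ensures that $L(g,g_n)\to 0$ if and only if $L(g_n,g)\to 0$, so that the topology defined by the two symmetrizations of $L$ is unambiguous and agrees with the usual topology of $\mathcal{T}(S)$. Second, I would verify the geodesic-connectedness hypothesis: any two points $g,h\in\mathcal{T}(S)$ can be joined by a geodesic for $L$ whose length is $L(g,h)$. This is precisely the content of Thurston's construction in \cite{Thurston1986}, where such a distance-realizing path is obtained as a concatenation of stretch segments associated with suitable maximal geodesic laminations; the fact that these paths are geodesics for $L$ is built into their construction via the identification $L=K$ of Theorem \ref{K=L}. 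Third, I would quote Proposition \ref{prop:compact}, according to which left (and right) closed balls in $(\mathcal{T}(S),L)$ are compact in the usual topology.

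With these three ingredients in place, Busemann's Hopf--Rinow theorem applies directly and yields that $(\mathcal{T}(S),L)$ is complete in the sense of Definition \ref{def:complete}, which is the desired conclusion.

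The only point that requires genuine input beyond the material already assembled in the excerpt is the geodesic-connectedness statement in the second step; this is where one has to rely on Thurston's explicit construction of distance-realizing stretch paths, so this is the step I would expect to flag as the main substantive ingredient rather than a formal deduction. Everything else reduces to combining Proposition \ref{prop:compact} with the Busemann Hopf--Rinow equivalence stated in the excerpt.
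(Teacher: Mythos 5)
Your proposal is correct and follows essentially the same route as the paper: the corollary is deduced by combining Busemann's Hopf--Rinow equivalence (which applies because any two points of $\mathcal{T}(S)$ are joined by a distance-realizing concatenation of stretch segments, by Thurston's construction) with the compactness of closed balls from Proposition \ref{prop:compact}. Your additional remark isolating the geodesic-connectedness hypothesis as the substantive input, and citing Theorem \ref{th:4-2007a} for the Busemann axioms, only makes explicit what the paper leaves implicit.
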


\subsection{The Finsler structure} 
Since we are studying non-symmetric metrics, we need to deal with non-symmetric norms. Such a norm is used to describe the infinitesimal structure of a non-symmetric metric.
In this context, we call a \emph{weak norm} a structure on a finite-dimensional vector space which satisfies all the axioms of a usual norm except the symmetry axiom.
More precisely, we make the following
\begin{definition}
A \emph{weak norm} on a vector space $V$ is a function $V\to [0,\infty)$, $v\mapsto \Vert v\Vert$ such that for every $v$ and $w$ in $V$, we have:
\begin{enumerate}
\item  $\Vert v\Vert= 0\iff v=0$;
\item for every $t>0$, $\Vert tv\Vert =t\Vert v\Vert$;
\item for every $t\in [0,1]$, $\Vert tv+(1-t)w\Vert\leq t\Vert v\Vert+ (1-t)\Vert w\Vert.$
\end{enumerate}
\end{definition}
Given a weak norm $\Vert \ \Vert$ on a vector space $V$, its \emph{unit ball}  is the subset 
$B\subset V$ consisting of the vectors $v\in V$ satisfying $\Vert v\Vert \leq 1$.

The \emph{unit sphere} of a weak norm is the subset consisting of the vectors $v\in V$ satisfying $\Vert v\Vert = 1$. The definition of the unit sphere differs from the unit sphere of a usual norm in the fact that it is not necessarily symmetric with respect to the origin. This is a consequence of the fact that for $v\in V$, we do not necessarily have $\Vert v\Vert=\Vert -v\Vert$.

Like in the case of usual norms, the unit ball or the unit sphere of a weak norm completely determines this weak norm.

   A \emph{Finsler structure} on a differentiable manifold $M$ is a continuous family of weak norms parametrized by the tangent bundle of this manifold, such that each point $x$ of $M$, the weak norm is defined on the tangent space of $M$ at $x$. The length of a $C^1$ path in $M$ in such a manifold is defined as the integral over this path of the norms of the tangent vectors to this path, measured at each point using the weak norm provided by the Finsler structures. A distance function is then obtained on $M$, by setting the distance between two points to be the infimum of the lengths of piecewise $C^1$ paths joining them. Such a weak definition of a Finsler structure is studied in the paper \cite{PT}.

Thurston showed that the asymmetric metric $K$ (or $L$) that we recalled in \S \ref{ss:Thurston} on the Teichm\"uller space $\mathcal{T}(S)$ is Finsler, that is, it is a length metric associated with a Finsler structure.
He also gave an explicit formula for the weak norm on each tangent space, namely, for any a tangent vector $v$ at a point $g$ in $\mathcal{T}(S)$, we have

\[
\Vert v \Vert =\sup_{\lambda \in \mathcal{ML}} \frac{d \ell_{\lambda}(v)}{\ell_{\lambda}(g)}.
\]
Here, $\mathcal{ML}$ is as before the space of measured laminations on the surface, $\ell_{\lambda}:\mathcal{T}(S)\to \mathbb{R}$ is the
 geodesic length function on Teichm\"uller space associated with the measured lamination $\lambda$ and $d \ell_{\lambda}$ is the differential of  $\ell_{\lambda}$ at the point $g\in \mathcal{T}(S)$.

An analogous formula for the Finsler norm of the Teichm\"uller metric  is proved  in the paper
\cite{2012e}. It is shown there that for any vector $v$ in the tangent space to Teichm\"uller space at a point $g$, the norm of $v$ for the Teichm\"uller metric (which is known to be Finsler) is given by
\[\Vert v\Vert = \sup_{\lambda \in \mathcal{ML}}\frac{d\mathrm{Ext}_{\lambda}^{1/2} (v)}{\mathrm{Ext}_{\lambda}^{1/2} (g)}
.\]
In this formula, for a given element $\lambda$ in $\mathcal{ML}$, $\mathrm{Ext}_\lambda: \mathcal{T}(S)\to \mathbb{R}$ is the associated extremal length function on Teichm\"uller space.  The formula is proved in the paper \cite{2012e}  (Theorem 3.6). In the same paper,  several other analogies between the Teichm\"uller metric and Thurston's metric are pointed out (some of them classically known and others are new and proved in that paper).

Thurston also gave a description of the unit sphere associated with the Finsler structure of the metric $K$ in the cotangent space at each point of Teichm\"uller space. This unit sphere is the boundary of a convex body which is the dual of the unit ball of the weak norm associated with the Finsler structure  of the Thurston metric. It turns out that this unit sphere in the cotangent space at each point $g$ of $\mathcal{T}(S)$ is the image of  projective measured lamination space $\mathcal{PML}$ by the map
\[ [\lambda]\mapsto d \log l_g(\lambda) \]
 which assigns to each element $[\lambda]$ of $\mathcal{PML}$ the logarithmic derivative $d \log l_g(\lambda)$ of the length of a representative $\lambda$ of $[\lambda]$. This logarithmic derivative, considered as an element of the cotangent space to Teichm\"uller space at $g$, is invariant by the action of the positive reals on $\mathcal{ML}$  and therefore depends only on the projective class $[\lambda]$ of $\lambda$.

An analogous description for the unit sphere associated with the Finsler structure of the Teichm\"uller metric, in the cotangent space at each point of Teichm\"uller space, is given in the paper \cite{2012e} (Theorem 4.1).  

We note that the fact that the the embedding of projective measured lamination space $\mathcal{PML}$ in the cotangent space to Teichm\"uller space by the map
$ [\lambda]\mapsto d \log l_g(\lambda)$ is convex and contains the origin of the vector space is part of the properties of Finsler weak norms and co-norms, but that in the case of the Thurston metric, 
Thurston proved directly the convexity of this image before introducing the Finsler structure, see \cite[Theorem 5.1]{Thurston1986}. Furthermore, Thurston showed that this unit ball is not strictly convex, and he described in some detail its facets, in particular those of minimal dimension, in terms of the geometric properties of the laminations that they represent. The  lack of strict convexity of the unit ball of the weak co-norm has an interpretation as a lack of strict convexity of the unit ball of the weak norm associated with the Finsler norm at each tangent space. From the point of view of the metric properties of the Thurston metric on Teichm\"uller space, it expresses the possibility of having multiple geodesics between some pairs of two points in that space (see \S \ref{ss:Thurston}). One may think here of the metric on $\mathbb{R}^n$ associated with the $L^1$ norm on $\mathbb{R}^2$, called the taxicab metric (the terminology is possibly due to Thurston), where there is a abundance of geodesics between any two distinct points.

The Finsler structures of Teichm\"uller space equipped with its  Teichm\"uller metrics and Thurston are rare instances of Finsler structures where the unit balls have such an appealing geometric significance.
 
\subsection{Thurston's cataclysm coordinates and stretch lines}\label{ss:cataclysm}

Let $\mu$ be a maximal geodesic lamination and let $\mathcal{MF}(\mu)$ be the subset of $\mathcal{MF}$ consisting of the equivalence classes of compactly supported measured foliations that admit representatives that are transverse to $\mu$.   (We recall that all the measured foliations that we consider are compactly supported, see \S \ref{ss:foliations}.) The set $\mathcal{MF}(\mu)$ does not depend on the choice of the hyperbolic metric for which the lamination $\mu$ is geodesic. The reason is that for any element of $\mathcal{MF}$, the fact of admitting a representative transverse to $\mu$ is a topological property, and the realizations of $\mu$ as a geodesic lamination for various metrics are all isotopic.  

The subset $\mathcal{MF}(\mu)$ of $\mathcal{MF}$ is invariant by the natural action of $\mathbb{R}_+$ on this space. We denote by  $\mathcal{PMF}(\mu)$ its projectivization. 

By fixing the geodesic lamination $\mu$ and varying the hyperbolic structure, we get a map 
\[
\phi_{\mu}:\mathcal{T}\to \mathcal{MF}(\mu).
\]
which associates to each equivalence class of hyperbolic structures $g$ on $S$ the corresponding  equivalence class of the horocyclic measured foliations.
The following is a result of Thurston, see \cite[Propositions 9.2 and 10.9]{Thurston1986}:
\begin{theorem}\label{th:cataclysm}
The map $\phi_{\mu}$ is a homeomorphism. 
\end{theorem}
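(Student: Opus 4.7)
The plan is to produce an explicit inverse $\psi_{\mu}:\mathcal{MF}(\mu)\to \mathcal{T}$ built from the gluing construction of Section \ref{s:ideal}, and then to check that both $\phi_{\mu}$ and $\psi_{\mu}$ are continuous with respect to the weak topologies on $\mathcal{T}$ and $\mathcal{MF}(\mu)$.

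First I would verify injectivity of $\phi_{\mu}$. Fix $g\in\mathcal{T}$. The complement $S\setminus\mu$ is a finite union of ideal triangles, each of which is isometric to any other, so the intrinsic hyperbolic geometry of the complementary pieces is already determined. What is not yet determined is how these pieces are glued along the leaves of $\mu$, and this gluing is encoded by the shift parameters on the edges of $\mu$ (in the sense of \S \ref{gluing}). But the horocyclic foliation $F_{\mu}(g)$ puts a distinguished point on each side of each edge of $\mu$ (where the non-foliated region touches the edge), and the signed transverse measure of $F_{\mu}(g)$ between two such distinguished points on opposite sides of an edge is exactly the shift parameter along that edge. Thus $F_{\mu}(g)$ determines all shift parameters and hence determines $g$.

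Next I would construct $\psi_{\mu}$. Given $F\in\mathcal{MF}(\mu)$, take a disjoint union of abstract ideal triangles, one for each component of $S\setminus\mu$, and glue them along the leaves of $\mu$ using shifts read off from $F$: the transverse measure of $F$ between the distinguished points on either side of a leaf gives, together with a sign determined by the orientation of $S$, the required shift. The result is a hyperbolic structure on $S$ (possibly non-complete a priori). The crucial point is completeness: by the generalization of Proposition \ref{prop:gluing2}, the resulting surface is complete at a puncture if and only if the sum of the shifts on the half-edges terminating there vanishes, and this vanishing is equivalent to the measured foliation $F$ being trivial in a neighborhood of that puncture, i.e.\ compactly supported. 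Thus $F\in\mathcal{MF}(\mu)$ (compactly supported and transverse to $\mu$) produces a well-defined class in $\mathcal{T}$. Tracing through the definitions, $\phi_{\mu}\circ\psi_{\mu}=\mathrm{Id}$ and $\psi_{\mu}\circ\phi_{\mu}=\mathrm{Id}$.

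Finally I would verify bicontinuity by testing against $\mathcal{S}$. For continuity of $\phi_{\mu}$, a sequence $g_n\to g$ in $\mathcal{T}$ produces hyperbolic ideal triangulations whose shift parameters vary continuously, so the intersection numbers $i(F_{\mu}(g_n),\alpha)$ converge to $i(F_{\mu}(g),\alpha)$ for every $\alpha\in\mathcal{S}$, using the quasi-transverse representatives of \S \ref{ss:quasi-transverse}. Continuity of $\psi_{\mu}$ is dual: a weakly convergent sequence $F_n\to F$ in $\mathcal{MF}(\mu)$ yields continuously varying shifts, hence continuously varying lengths $\ell_g(\alpha)$ for every $\alpha\in\mathcal{S}$, which forces convergence in the weak topology on $\mathcal{T}$.

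The main obstacle is technical rather than conceptual: a maximal geodesic lamination $\mu$ typically has uncountably many leaves, so ``shift parameter on an edge'' must be understood as a transverse cocycle on $\mu$ rather than as a finite list of real numbers. Making the gluing construction rigorous in this setting — and, in particular, verifying that the infinitesimal version of the completeness condition of Proposition \ref{prop:gluing2} is indeed equivalent to the compact support condition that defines $\mathcal{MF}(\mu)$ — is the delicate step, and it is precisely the place where the hypothesis that $\mu$ be maximal is used to guarantee that the complementary regions are all ideal triangles rather than more general ideal polygons.
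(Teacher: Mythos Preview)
The paper does not prove this theorem; it attributes the result to Thurston and cites \cite[Propositions~9.2 and~10.9]{Thurston1986}. So there is no in-paper proof to compare against.

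Your outline is the right one and is essentially Thurston's: the horocyclic foliation encodes the shearing data along $\mu$, and conversely shearing data reconstructs the metric. For a \emph{finite} maximal lamination $\mu$ (an ideal triangulation) your argument is essentially complete, and indeed this special case is what \S\ref{s:ideal} of the paper develops.

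The gap is the one you name yourself in your last paragraph, but it is not ``technical rather than conceptual'': it is the entire substance of the theorem. When $\mu$ has uncountably many leaves there are no ``shift parameters on edges'' in any finite sense; one must work with transverse H\"older cocycles on $\mu$, and the inverse map $\psi_\mu$ is Thurston's cataclysm (or, in Bonahon's later reformulation, the shearing map). Several of your steps become genuinely nontrivial in this setting: (i) showing that an arbitrary $F\in\mathcal{MF}(\mu)$ yields a well-defined hyperbolic structure, since ``gluing ideal triangles along $\mu$'' is no longer a finite combinatorial procedure but a limiting construction along a Cantor transversal; (ii) showing that compact support of $F$ is equivalent to completeness of the resulting metric when there is no vertex at which a finite ``sum of shifts'' makes sense; and (iii) your continuity arguments, since weak convergence $F_n\to F$ in $\mathcal{MF}$ is tested against simple closed curves and does not obviously control the cocycle on $\mu$, nor conversely. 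Your proposal is a correct roadmap, but a proof in the general case requires the machinery of \cite{Thurston1986} (or the shearing-coordinate framework), which you have identified but not supplied.
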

 
 The map $\phi_{\mu}$ is a system of coordinates on Teichm\"uller space which Thurston calls  \emph{cataclysm coordinates} \cite[\S 9]{Thurston1986}. This system of coordinates depends on the choice of a maximal geodesic lamination $\mu$. Theorem \ref{th:conv} in \S \ref{s:stretch} below tells us that in a certain precise sense this system of coordinates extends to Thurston's boundary of Teichm\"uller space; in fact it leads to coordinate systems near points in the boundary.

  Now we can define stretch lines.

 As a subset of Teichm\"uller space, a \emph{stretch line} is a family of hyperbolic structures which is
 the inverse image by $\phi_{\mu}$ of an orbit of the natural action of the positive real numbers on this space $\mathcal{MF}(\mu)$.    We say that the lamination $\mu$ is the \emph{support} of this stretch line.
 Furthermore, a stretch line is equipped with a special parametrization. The precise definition is as follows: Given a hyperbolic structure $g$ on $S$, we call a \emph{stretch line with support $\mu$ and origin $g$}  a mapping $h:\mathbb{R}\to \mathcal{T}$ defined, for any $t$ in $\mathbb{R}$, by
\[h(t)=\varphi_{\mu}^{-1}(e^tF_\mu(g)).\]

Equipped with this parametrization, the stretch line $h$ is a geodesic for Thurston's metric $K$ (or $L$), see \cite[\S 8]{Thurston1986}.

By restricting a stretch line to the set of nonnegative numbers, we obtain the notion of \emph{stretch ray starting at a hyperbolic structure $g$}. 

The hyperbolic structure $g$ and the maximal geodesic lamination $\mu$ determine the stretch line $h:\mathbb{R}\to\mathcal{T}$ together with its origin $h(0)=g$. The pair $(\mu, g)$ determines the horocyclic foliation $F_{\mu}(g)$.  
  Conversely, any pair $(\mu, F)$ where $\mu$ is a geodesic lamination for some hyperbolic metric on $S$ and  $F$  a compactly supported measured foliation transverse to $\mu$ determines a complete hyperbolic structure $g$ for which $F$ is the horocyclic foliation and a stretch line with origin $g$.
 
The fact that a stretch line is a geodesic for Thurston's metric and that it is determined by a pair $(\mu,F)$ where $\mu$ is a lamination and $F$ a measured foliation transverse to it is 
reminiscent of the fact that a geodesic for the Teichm\"uller metric is determined by two transverse measured foliations, its horizontal and vertical foliations. Note however that the geodesic lamination $\mu$ associated with a stretch line is not necessarily measured (it may not carry a transverse measure of full support), whereas both foliations associated with a Teichm\"uller geodesic are measured.

In the special case where the surface $S$ has cusps and where the geodesic lamination $\mu$ is an \emph{ideal triangulation}, that is, when $\mu$ consists of a finite collection of bi-infinite geodesics which converge in both directions to cusps of the surface, then, we can parametrize both the Teichm\"uller space of $S$ and the space $\mathcal{MF}(\mu)$ of measured foliations transverse to $\mu$ by the shift coordinates on the edges of $\mu$ (see \S \ref{s:ideal}). The cataclysm coordinates are then those given by the identity map in terms of these shift coordinates. A stretch line, in these coordinates, amounts to multiplying by a constant factor the shift coordinates (at time $t$, the shift coordinates are multiplied by the factor $e^t$).

 \subsection{Stretch lines and geodesics for Thurston's metric}

 Thurston proved that any two points in Teichm\"uller space can be joined by a geodesic which is a finite concatenation of segments of stretch lines. He gave constructive ways to find such geodesics, based on the study pf Lipschitz maps between ideal triangles.
He also showed that in general, such a geodesic joining the two points is not unique. This contrasts
with the case of the Teichm\"uller metric on Teichm\"uller space for which the geodesic joining any two points is unique.

One should note that there is a great variety of geodesics
for Thurston's metric that have geometric explicit descriptions and that are not concatenations of segments of stretch lines. Some of them are made explicit in the papers \cite{2009j}  and \cite{2015-Yamada1} and are based on the construction of Lipschitz maps between right-angled hyperbolic hexagons.

\section{On the asymptotic behavior of  stretch lines}\label{s:stretch}

In this section, we study the convergence of stretch lines to points on Thurston's boundary of Teichm\"uller space. We have the following:

\begin{theorem} \label{th:conv-stretch} Let $\mu$ be a maximal geodesic lamination on the surface $S$ equipped with a hyperbolic metric and let $F_{\mu}(h)$ be the associated horocyclic foliation. We assume that  $F_{\mu}(h)$ does not consist entirely of leaves that are closed leaves homotopic to punctures (in other words, we assume that the geodesic lamination associated with $F_{\mu}(h)$ is not empty). Then the stretch line $h_t:\mathbb{R}\to\mathcal{T}$ with support $\mu$ and starting at $h_0=h$ converges, as $t\to\infty$, to the projective class $[F_{\mu}(h)]$ as a point on the boundary of Teichm\"uller space.
\end{theorem}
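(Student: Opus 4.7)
I will verify the convergence criterion recalled at the end of Section 2 with limit candidate the projective class $[F_\mu(h)]$, distinguished representative $\lambda = F_\mu(h) \in \mathcal{MF}$, and scalar sequence $x_t = e^{-t}$. Concretely I must establish two things: (a) that $h_t$ leaves every compact subset of $\mathcal{T}(S)$, and (b) that $e^{-t}\, l_{h_t}(\alpha) \to i(F_\mu(h),\alpha)$ for every $\alpha \in \mathcal{S}$. The hypothesis that the geodesic lamination underlying $F_\mu(h)$ is non-empty ensures $F_\mu(h) \neq 0$ in $\mathcal{MF}$, so $[F_\mu(h)]$ is a bona fide point of Thurston's boundary, and by density of weighted simple closed curves in $\mathcal{ML}$ (Proposition \ref{prop:extension}) it guarantees some $\alpha_0 \in \mathcal{S}$ with $i(F_\mu(h),\alpha_0) > 0$.

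\textbf{Key length--intersection estimate.} The technical core is the uniform bound
\[
i(F_\mu(g),\alpha) \;\le\; l_g(\alpha) \;\le\; i(F_\mu(g),\alpha) + C_{\alpha,\mu}
\]
valid for every $\alpha \in \mathcal{S}$ and every complete hyperbolic metric $g$ in which $\mu$ is a geodesic lamination, where $C_{\alpha,\mu}$ depends only on the topological pair $(\alpha,\mu)$. The lower bound is immediate: on any arc transverse to the horocyclic foliation the transverse measure is dominated by hyperbolic length, since in the upper half-plane model with horocyclic leaves horizontal, the transverse measure integrand $|y'|/y$ is at most the hyperbolic line element $\sqrt{x'^2+y'^2}/y$. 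The upper bound is obtained by partitioning the $g$-geodesic representative $\alpha^*$ according to its crossings with the ideal triangles of $S \setminus \mu$; on each such crossing the excess of hyperbolic length over horocyclic transverse measure is absorbed in the passage through the non-foliated central region of the triangle, a configuration of three mutually tangent horocyclic arcs whose diameter is a universal constant because all ideal triangles are isometric. The number of triangle-crossings of $\alpha^*$ is a combinatorial invariant of $(\alpha,\mu)$, independent of $g$, so summing yields the constant $C_{\alpha,\mu}$.

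\textbf{Conclusion and principal obstacle.} The definition of the stretch line via cataclysm coordinates (Theorem \ref{th:cataclysm} and \S \ref{ss:cataclysm}) forces $\phi_\mu(h_t) = e^t F_\mu(h)$, hence $F_\mu(h_t) = e^t F_\mu(h)$ in $\mathcal{MF}$ and $i(F_\mu(h_t),\alpha) = e^t\, i(F_\mu(h),\alpha)$ for every $\alpha \in \mathcal{S}$. Substituting $g = h_t$ into the key estimate and multiplying by $e^{-t}$ gives
\[
i(F_\mu(h),\alpha) \;\le\; e^{-t} l_{h_t}(\alpha) \;\le\; i(F_\mu(h),\alpha) + e^{-t} C_{\alpha,\mu},
\]
so $e^{-t} l_{h_t}(\alpha) \to i(F_\mu(h),\alpha)$ as $t \to \infty$, establishing (b). Applied to $\alpha_0$, this same chain forces $l_{h_t}(\alpha_0) \to \infty$, and hence via the length-spectrum embedding $\mathcal{T}(S) \hookrightarrow \mathbb{R}_+^{\mathcal{S}}$ recalled in \S \ref{ss:T}, $h_t$ exits every compact set, which is (a). The principal obstacle is the upper bound in the key estimate: it depends essentially on the fact that the non-foliated central region of each ideal triangle has fixed bounded hyperbolic diameter---a feature special to ideal (rather than merely finite) triangles---so one must carefully check that the geometry of the crossing of $\alpha^*$ through a triangle is controlled by universal data rather than by the shape of the triangle in $(S,g)$.
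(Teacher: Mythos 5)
Your overall strategy is the same as the paper's: the paper deduces Theorem \ref{th:conv-stretch} from Theorem \ref{th:conv}, whose proof is precisely the verification you carry out, namely combining the scaling $F_\mu(h_t)=e^tF_\mu(h)$ with the two-sided comparison $i(F_\mu(g),\alpha)\leq l_g(\alpha)\leq i(F_\mu(g),\alpha)+C$ (the ``Fundamental Lemma,'' Lemma \ref{lemma:fund}) and then applying the convergence criterion for Thurston's boundary. Your treatment of the lower bound, of the divergence $l_{h_t}(\alpha_0)\to\infty$, and of the passage to the limit is correct.

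The gap is in your justification of the upper bound for a \emph{general} maximal geodesic lamination $\mu$. You assert that ``the number of triangle-crossings of $\alpha^*$ is a combinatorial invariant of $(\alpha,\mu)$, independent of $g$,'' and you locate the difficulty in the shape of the ideal triangles. Neither is right. All ideal triangles are isometric, so the shape is not an issue; but when $\mu$ is not a finite lamination (e.g.\ when it contains a minimal component with uncountably many leaves), a closed geodesic $\alpha^*$ meets $\mu$ in a Cantor set and meets the complementary ideal triangles in \emph{infinitely many} arcs. So one cannot simply sum a universal per-crossing excess over the crossings: one must show that all but a topologically bounded number of these arcs are ``corner'' arcs contributing no excess (or that the total excess converges and is bounded by a quantity depending only on the pair $(\alpha,\mu)$, uniformly in $g$). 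This is exactly the point the paper is careful about: the explicit constant $4\,i(\alpha,\mu)$ of Proposition \ref{lemma:fund2} is proved only for \emph{finite} laminations, where the count of arcs of type $a_i$ is bounded by $2\,i(\alpha,\mu)$; for general maximal $\mu$ the paper invokes the Fundamental Lemma of \cite{1991a}, whose proof replaces the count of complementary arcs by a covering of $\mu$ by finitely many rectangles adapted to the lamination, and which is moreover stated only for metrics in $V(\mu,\epsilon)$ (a condition you should also verify holds along the positive ray, which it does since $\mu$ is being stretched). So either restrict your argument to finite $\mu$ and cite Proposition \ref{lemma:fund2}, or supply the rectangle argument; as written, the key inequality is not established in the generality you use it.
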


Motivated by this result, the horocyclic measured foliation associated with a stretch line (or the projective class of this  horocyclic measured foliation) is called the \emph{direction} of the stretch line.

  Theorem \ref{th:conv-stretch} is a consequence of the following theorem:

\begin{theorem}[\cite{1991a}, Theorem 4.1] \label{th:conv} Let $\mu$ be a maximal geodesic lamination on $S$. 

(1) Let $g_n$ be a sequence of elements in $\mathcal{T}(S)$ which converges to a point $[F]$ on Thurston's boundary of Teichm\"uller space and suppose that $[F]$ is in $\mathcal{PMF}(\mu)$. Then the associated sequence of projective horocyclic foliations $[F_\mu(g_n)]$ also converges to $[F]$.

(2) Let $g_n$ be a sequence of elements in $\mathcal{T}(S)$ that tends to infinity, and assume that the associated sequence $F_\mu(g_n)$ tends to infinity in $\mathcal{MF}$. If the associated sequence of projective measured foliations $[F_\mu(g_n)]$ converges in $\mathcal{PMF}$, then the sequence $g_n$ also converges and the two sequences have the same limit.
\end{theorem}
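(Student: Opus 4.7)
The plan is to reduce both parts to the convergence criterion for Thurston's boundary stated at the end of Section~\ref{s:notation}, via a uniform comparison between the geodesic length function $l_g(\alpha)$ and the horocyclic intersection function $i(F_\mu(g),\alpha)$. The central technical step is an estimate of the form: for each $\alpha\in\mathcal{S}$, there exists a constant $C_\alpha>0$ such that for every $g\in\mathcal{T}(S)$,
\[ i(F_\mu(g),\alpha)\;\leq\;l_g(\alpha)\;\leq\;i(F_\mu(g),\alpha)+C_\alpha. \]
The lower inequality is immediate, since $i(F,\alpha)$ is an infimum over representatives and the horocyclic measure is $1$-Lipschitz along transverse arcs, so the horocyclic variation along the geodesic representative $\alpha^{*}$ is at most its hyperbolic length. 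The upper inequality is proved by decomposing $\alpha^{*}$ into its arcs through the ideal triangles of $S\setminus\mu$ and combining two geometric facts about the horocyclic foliation: since all ideal triangles are isometric, the non-foliated central region of every triangle has a universally bounded hyperbolic diameter, so each excursion of $\alpha^{*}$ across such a region adds at most a constant to the excess $l_g-I(F_\mu(g),\cdot)$; and an arc confined to a single horocyclic corner sector has length exceeding its horocyclic measure by a bounded amount, regardless of how far it penetrates toward the ideal vertex. Summing and using that the combinatorial type of $\alpha^{*}$ with respect to $\mu$ is controlled by topological data attached to $\alpha$ alone yields the constant $C_\alpha$.

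Given this comparison, part~(1) is a direct limiting argument. By the convergence criterion, the hypothesis $g_n\to[F]$ produces positive scalars $x_n\to 0$ with $x_n\,l_{g_n}(\alpha)\to i(F,\alpha)$ for each $\alpha\in\mathcal{S}$, and
\[ x_n\,i(F_\mu(g_n),\alpha)\;=\;x_n\,l_{g_n}(\alpha)\;-\;x_n\bigl(l_{g_n}(\alpha)-i(F_\mu(g_n),\alpha)\bigr) \]
has first summand tending to $i(F,\alpha)$ and second summand bounded by $C_\alpha x_n\to 0$. Hence $x_n\,i(F_\mu(g_n),\alpha)\to i(F,\alpha)$ for every $\alpha$, which is precisely the statement that $[F_\mu(g_n)]\to[F]$ in $\mathcal{PMF}$. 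For part~(2), the hypotheses $F_\mu(g_n)\to\infty$ in $\mathcal{MF}$ together with $[F_\mu(g_n)]\to[F']$ in $\mathcal{PMF}$ give scalars $y_n\to 0$ with $y_n\,i(F_\mu(g_n),\alpha)\to i(F',\alpha)$, and the identical algebraic identity read in the opposite direction yields $y_n\,l_{g_n}(\alpha)\to i(F',\alpha)$. The convergence criterion then forces $g_n\to[F']$ in Thurston's compactification.

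The main obstacle is the upper bound in the comparison estimate. Its delicacy comes from the fact that $\mu$ may carry uncountably many leaves, so $\alpha^{*}\cap\mu$ can be a Cantor-like set and $\alpha^{*}$ may decompose into infinitely many, mostly very short, arcs through corners of ideal triangles. One must argue that the cumulative contribution of the short corner arcs to $l_g-I(F_\mu(g),\cdot)$ remains bounded uniformly in $g$, and one must isolate the correct topological invariant of $\alpha$ (depending only on its isotopy class, not on $g$) that controls the finite residual. Once this analytic step is secured, both convergences reduce to the elementary limiting calculation above; the hypothesis $[F]\in\mathcal{PMF}(\mu)$ in part~(1) enters by ensuring that the candidate limit $[F]$ is a priori compatible with being a projective limit of horocyclic foliations, which by construction are supported transversely to $\mu$.
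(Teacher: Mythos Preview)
Your overall strategy is the same as the paper's: both parts reduce to the convergence criterion via a two-sided comparison between $l_g(\alpha)$ and $i(F_\mu(g),\alpha)$, and the paper calls this comparison the Fundamental Lemma (Lemma~\ref{lemma:fund}). Your deductions of (1) and (2) from such an estimate are correct and match the intended argument.

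The gap is in the comparison estimate itself. You assert that the constant $C_\alpha$ can be chosen uniformly over \emph{all} $g\in\mathcal{T}(S)$; the paper does not prove this and states the Fundamental Lemma only for sequences $g_n$ lying in $V(\mu,\epsilon)=\{g:l_g(\mu)>\epsilon\}$ and converging to a point of $\mathcal{PMF}(\mu)$. This is precisely where the hypothesis $[F]\in\mathcal{PMF}(\mu)$ enters --- it is a hypothesis of the estimate, not merely a ``compatibility'' condition as you suggest. If your uniform version were available, part~(1) would hold with no restriction on the limit point $[F]$, which is more than the theorem claims. Your justification for the upper bound when $\mu$ has uncountably many leaves is also where the argument is thinnest: the phrase ``the combinatorial type of $\alpha^{*}$ with respect to $\mu$ is controlled by topological data attached to $\alpha$ alone'' is not substantiated, and in the paper the finite-lamination count $i(\alpha,\mu)$ that controls the number of horogeodesic pieces (Proposition~\ref{lemma:fund2}) has no direct analogue. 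The paper handles the general case by covering $\mu$ with a finite system of rectangles adapted to the lamination and counting crossings of $\alpha^{*}$ with those rectangles; this replaces the ideal-triangle decomposition you use and is what produces a constant valid along the given sequence. You should either supply that rectangle argument or weaken your claimed estimate to one along the sequence $g_n$, with the hypotheses of Lemma~\ref{lemma:fund}, and revise your explanation of the role of $[F]\in\mathcal{PMF}(\mu)$ accordingly.
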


This theorem is used in the paper \cite{1991a} to show that the earthquake flow in Teichm\"uller space, if its associated measured geodesic lamination is maximal, extends to a flow on Thurston's boundary of Teichm\"uller space and it induces there a flow which may be called an \emph{earthquake flow on projective measured geodesic lamination space}.

  The proof of Theorem \ref{th:conv} involves several steps, the most important one being Lemma \ref{lemma:fund} below, referred to in \cite{1991a} as the ``Fundamental Lemma." This lemma gives precise estimates that make relations between length functions associated with the metrics $g_n$ and intersection functions with the horocyclic foliations $F_\mu(g_n)$.
  Before stating this lemma, we introduce some notation.

  Given $\epsilon >0$ and a maximal geodesic lamination $\mu$, we define $V(\mu,\epsilon)$ to be the subset of Teichm\"uller space consisting of the hyperbolic structures stisfying $l_g(\mu)>\epsilon$.

\begin{lemma}[The Fundamental Lemma, \cite{1991a}, Lemma 4.9]\label{lemma:fund}
Consider a sequence $(g_n)_{n\geq 0}$ of elements in $V(\mu,\epsilon)$ converging to a point in $\mathcal{PMF}(\mu)$ and let $F_n=F_\mu(g_n)$ be the associated sequence of projective horocyclic foliations. 
Then, for every $\alpha$ in $\mathcal{S}$, there exists a constant $C(\alpha)$ (depending only on $\alpha$) such that for any $n\geq 0$, we have 
\[i(F_n,\alpha)\leq l_{g_n},(\alpha)\leq i(F_n,\alpha)+ C(\alpha).\]

\end{lemma}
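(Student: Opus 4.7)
The plan is to compare $l_{g_n}(\alpha)$ and $i(F_n,\alpha)$ by cutting a representative of $\alpha$ at its crossings with $\mu$ and controlling each arc inside its ideal triangle. Throughout, let $\alpha_n^{*}$ denote the geodesic representative of $\alpha$ in $g_n$. Assuming $\alpha$ is not homotopic to a closed leaf of $\mu$ (the other case being handled by direct inspection), $\alpha_n^{*}$ meets $\mu$ transversely in a finite number of points whose cardinality realizes the minimum geometric intersection of $\alpha$ with $\mu$, a topological constant $N(\alpha)$ depending only on $\alpha$ and the topological type of $\mu$, not on $n$.

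For the inequality $i(F_n,\alpha)\le l_{g_n}(\alpha)$, I would argue pointwise along $\alpha_n^{*}$. Where defined, the transverse $1$-form of $F_n$ is $\omega(v)=\langle v,\hat n\rangle_{g_n}$ with $\hat n$ the $g_n$-unit vector orthogonal to the horocyclic leaf, so $|\omega(v)|\le\|v\|_{g_n}$. The non-foliated central regions of the ideal triangles contribute zero to the transverse measure and only nonnegatively to the hyperbolic length. Integrating yields $I(F_n,\alpha_n^{*})\le l_{g_n}(\alpha_n^{*})=l_{g_n}(\alpha)$, and $i(F_n,\alpha)\le I(F_n,\alpha_n^{*})$ by the infimum definition of $i$.

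For the reverse inequality the key step is a universal estimate in a single ideal triangle $T$: there exists $C_0$ such that every geodesic arc $\sigma\subset T$ with endpoints on $\partial T$ satisfies $l_{hyp}(\sigma)\le I(F|_T,\sigma)+C_0$. In the upper half-plane model with one vertex at $\infty$, the non-foliated central region has bounded diameter and so contributes at most a constant to the length. In each of the three cusp regions, parametrizing a Euclidean semicircular geodesic by its angle $\theta$, the hyperbolic length element is $d\theta/\sin\theta$ and the horocyclic transverse-measure element is $|\cos\theta|/\sin\theta\,d\theta$, so the pointwise excess is $\tan(\theta/2)$ for $\theta\le\pi/2$ and $\cot(\theta/2)$ for $\theta\ge\pi/2$. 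A direct antiderivative computation shows the excess integrates to at most a universal constant over any arc confined to the cusp region; this reflects the geometric fact that a geodesic going deep into a cusp becomes nearly perpendicular to the horocycles, forcing length and transverse measure to agree up to bounded additive error.

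To conclude, $\alpha_n^{*}$ is quasi-transverse to $F_n$: it crosses every horocyclic leaf transversely, and at each point where it enters the non-foliated region (which becomes a $3$-prong singularity of $F_n$ after the standard collapse to a tripod), its incoming and outgoing directions lie in distinct sectors because they enter and exit through different bounding horocyclic arcs. Hence $I(F_n,\alpha_n^{*})=i(F_n,\alpha)$. Summing the local estimate over the $N(\alpha)$ arcs of $\alpha_n^{*}$ gives
\[ l_{g_n}(\alpha)=\sum_i l_{hyp}(\sigma_i)\le\sum_i I(F_n,\sigma_i)+N(\alpha)C_0=I(F_n,\alpha_n^{*})+N(\alpha)C_0=i(F_n,\alpha)+N(\alpha)C_0, \]
so the lemma holds with $C(\alpha)=N(\alpha)C_0$. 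The main difficulty is the geometric estimate on ideal triangles, together with verifying that $\alpha_n^{*}$ is genuinely quasi-transverse; the hypothesis $g_n\in V(\mu,\epsilon)$ ensures the triangulation geometry near $\mu$ does not collapse and hence that the constants remain uniform in $n$.
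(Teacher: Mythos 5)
Your proof of the first inequality is fine (the pointwise bound $|\omega(v)|\le\Vert v\Vert_{g_n}$ is in fact a more direct route than the projection argument via Lemma \ref{lem:proj} used in the text), but the second inequality has two genuine gaps. The first is the claim that the geodesic representative $\alpha_n^{*}$ is quasi-transverse to $F_n$, hence that $I(F_n,\alpha_n^{*})=i(F_n,\alpha)$. This is false in general: when an arc of $\alpha_n^{*}$ traverses a cusp region of an ideal triangle, in the upper half-plane picture (cusp at $\infty$, sides $x=0$ and $x=1$) it is a Euclidean semicircle whose apex may lie inside the strip, and at the apex it is \emph{tangent} to a horocyclic leaf, not transverse to it. There the curve backtracks with respect to the transverse coordinate, picking up the quantity $\log(y_{\max}/y)$ twice, so $I(F_n,\alpha_n^{*})>i(F_n,\alpha)$ strictly. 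The paper's argument avoids the geodesic entirely for this inequality: it starts from a quasi-transverse representative $\alpha'$ realizing $i(F_n,\alpha)$, replaces its arcs by horogeodesic arcs (a piece of a horocyclic leaf followed by a piece of a leaf of $\mu$) without changing the total transverse measure, and bounds $l_{g_n}(\alpha)$ from above by the length of the resulting curve. Your route is probably repairable, because the backtracking per cusp region is universally bounded (if the component enters the region $\{y\ge 1\}$ at height $y_0$ and its apex lies in the strip, then $y_{\max}/y_0\le\sqrt{2}$), but that is an additional estimate you would need to supply in place of the quasi-transversality claim.

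The second gap is structural. The lemma concerns a general maximal geodesic lamination $\mu$, which typically has uncountably many leaves; then $\alpha_n^{*}$ meets $\mu$ in a Cantor set and meets the complementary ideal triangles in countably infinitely many arcs. There is no finite topological constant $N(\alpha)$, and summing a fixed $C_0>0$ over the arcs diverges. As written, your argument establishes only the finite-lamination case, i.e.\ Proposition \ref{lemma:fund2} with $C(\alpha)$ of the order of $i(\alpha,\mu)$. For general $\mu$ the paper replaces the complementary components by a finite system of rectangles adapted to $\mu$ and works with the finitely many components of the intersection of $\alpha$ with these rectangles; one must then show that the total excess over the infinitely many complementary arcs inside a rectangle is controlled. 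Note also that your stated use of the hypotheses $g_n\in V(\mu,\epsilon)$ and convergence in $\mathcal{PMF}(\mu)$ (``the triangulation geometry near $\mu$ does not collapse'') does no work: all ideal triangles are isometric, so the single-triangle constant $C_0$ is automatically universal; these hypotheses have to enter elsewhere, in keeping the global constant $C(\alpha)$ uniform in $n$ for a general $\mu$.
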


 We shall prove a weaker version of Lemma \ref{lemma:fund} that is valid in the case where $\mu$ is a finite geodesic lamination, and we first define this notion:  
 
 A \emph{finite} geodesic lamination on $S$ is a geodesic lamination in which each leaf is of one of the following two types:

\begin{itemize}
\item a simple closed geodesic;

\item
a bi-infinite geodesic whose ends either spiral along a simple closed geodesic or converge to a cusp of $S$.
\end{itemize}

A finite geodesic lamination decomposes the surface $S$ into a finite number of ideal hyperbolic triangles such that $S$ is obtained by gluing these triangles along their boundary components.

 In the case where $\mu$ is a finite geodesic lamination, we have the following more precise version of the Fundamental Lemma.
  
\begin{proposition}[\cite{1988}, Proposition 3.1]\label{lemma:fund2}
Let $\mu$ be a finite geodesic lamination. Then, for every $\alpha$ in $\mathcal{S}$ and for every $g$ in  $\mathcal{T}(S)$, we have 
\[  i(F_\mu(g),\alpha)\leq l_g(\alpha)\leq i(F_\mu(g),\alpha)+4 i(\alpha,\mu)
.\]

\end{proposition}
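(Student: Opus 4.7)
The plan is to prove the two inequalities separately, using the fact that a \emph{finite} geodesic lamination $\mu$ cuts $S$ into a finite cellular decomposition by ideal triangles, on each of which the horocyclic foliation $F_\mu(g)$ has a completely explicit structure. Throughout, let $\alpha^\ast$ denote the $g$-geodesic representative of $\alpha$, so that $l_g(\alpha)=l_g(\alpha^\ast)$.

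For the lower bound $i(F_\mu(g),\alpha)\le l_g(\alpha)$, I would establish the pointwise estimate that, on any $C^1$ arc in $S$, the infinitesimal transverse measure of $F_\mu(g)$ is dominated by the hyperbolic length element. Near a point in a foliated strip of a complementary triangle, choose upper half-plane coordinates in which the local leaves of $F_\mu(g)$ are the horizontal lines $y=\mathrm{const}$; then the transverse measure is $|d(\log y)|=|dy|/y$, while the infinitesimal hyperbolic length is $\sqrt{dx^2+dy^2}/y\ge |dy|/y$. In the non-foliated region of each triangle the transverse measure is zero, so the inequality is trivial there. Integrating along $\alpha^\ast$ gives $I(F_\mu(g),\alpha^\ast)\le l_g(\alpha^\ast)$, and the lower bound follows from $i(F_\mu(g),\alpha)\le I(F_\mu(g),\alpha^\ast)$ by the infimum definition of the intersection number.

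For the upper bound $l_g(\alpha)\le i(F_\mu(g),\alpha)+4\,i(\alpha,\mu)$ the strategy is to exhibit a representative $\gamma$ of $\alpha$ with $l_g(\gamma)$ bounded by the right-hand side; since $l_g(\alpha)\le l_g(\gamma)$, this suffices. Start with a quasi-transverse representative $\alpha^q$ of $\alpha$ with $I(F_\mu(g),\alpha^q)=i(F_\mu(g),\alpha)$, chosen to meet $\mu$ in exactly $i(\alpha,\mu)$ points. In each complementary ideal triangle $T$ traversed by $\alpha^q$, modify the arc (rel its two endpoints on $\mu$) into a concatenation of three pieces: (a) a segment along the entry edge of $\mu$, from the entry crossing to the distinguished point on that edge; (b) an arc lying in the boundary of the non-foliated region of $T$, that is a union of horocyclic sub-arcs tangent pairwise at the distinguished points; (c) a segment along the exit edge of $\mu$, from the corresponding distinguished point to the exit crossing. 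Because edges of $\mu$ are geodesics perpendicular to the leaves of $F_\mu(g)$, the hyperbolic lengths of the segments in (a) and (c) coincide with their transverse measures with respect to $F_\mu(g)$; the piece (b) lies on leaves of $F_\mu(g)$, so it adds $0$ to the transverse measure while contributing hyperbolic length equal to the length of a sub-arc of the horocyclic triangle bounding the non-foliated region.

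The key geometric input is that the three horocyclic arcs bounding the non-foliated region of an ideal triangle have hyperbolic length exactly $1$ each: in the upper half-plane model of the standard triangle with vertices $0,1,\infty$, the tangency constraints force the three bounding horocycles to be $y=1$ and the two Euclidean circles of radius $1/2$ centered at $(0,\tfrac12)$ and $(1,\tfrac12)$, and a direct computation yields hyperbolic length $1$ for each; by isometry this is intrinsic to the ideal triangle. Hence the horocyclic piece in any triangle is at most three such arcs long, which together with a bounded adjustment at the distinguished points is at most $4$. Summing the contribution over the $i(\alpha,\mu)$ triangles crossed gives $l_g(\gamma)\le i(F_\mu(g),\alpha)+4\,i(\alpha,\mu)$.

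The main obstacle is to justify that the modification of $\alpha^q$ into $\gamma$ preserves the minimality of the total transverse measure, i.e.\ that $\gamma$ can be arranged so its transverse pieces along edges of $\mu$ integrate to exactly $i(F_\mu(g),\alpha)$ while the horocyclic piece in each triangle has length at most $4$. This requires a careful combinatorial bookkeeping of the distinguished points, the choices of sides along each edge of $\mu$, and the matching between consecutive triangles at each $\mu$-crossing. Here the finiteness of $\mu$ is essential: it produces a genuine cellular decomposition of $S$ with only finitely many combinatorial types of corner configuration, which is precisely what allows the non-explicit constant $C(\alpha)$ of the general Fundamental Lemma (Lemma~\ref{lemma:fund}) to be replaced by the clean additive expression $4\,i(\alpha,\mu)$.
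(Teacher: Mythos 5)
Your argument for the first inequality is correct and amounts to the paper's own computation in infinitesimal form: the pointwise bound (transverse measure element $|dy|/y$ dominated by the length element $\sqrt{dx^2+dy^2}/y$) is exactly what underlies Lemma \ref{lem:proj}, and your computation that the three horocyclic arcs bounding the non-foliated region each have length $1$ is also correct.

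The second inequality has a genuine gap, located precisely at the step you flag as ``the main obstacle''; moreover it is not a matter of combinatorial bookkeeping but of the routing itself. In your modified curve $\gamma$ the pieces (a) and (c) have length equal to their transverse measure, so $l_g(\gamma)=I(F_\mu(g),\gamma)+(\hbox{total length of the pieces (b)})$, and the whole argument hinges on $I(F_\mu(g),\gamma)$ being equal to $i(F_\mu(g),\alpha)$. It is not. Consider an arc of $\alpha^q$ that enters and leaves a triangle through the two edges adjacent to a common ideal vertex $v$, at two points lying on one and the same horocyclic leaf, at distance $M$ from the respective distinguished points, i.e.\ deep inside the spike at $v$. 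A quasi-transverse arc joining them can follow that leaf and contributes $0$ to the transverse measure, whereas your detour down to the two distinguished points and around the non-foliated region contributes $2M$ to the transverse measure, hence $2M+O(1)$ to $l_g(\gamma)$. The quantity $M$ depends on the hyperbolic structure $g$ and is not controlled by $i(F_\mu(g),\alpha)$ or $i(\alpha,\mu)$, so no bookkeeping of corner configurations can recover the bound $l_g(\gamma)\le i(F_\mu(g),\alpha)+4\,i(\alpha,\mu)$ for this $\gamma$.

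The repair is to route through the actual crossing points rather than through the distinguished points, which is what the paper does. Decompose each arc of $\alpha^q\setminus\mu$ into its components inside the foliated spikes and inside the non-foliated regions. Across a spike, replace the component, rel endpoints, by the horocyclic leaf arc through one endpoint followed by a segment of the edge of $\mu$ containing the other endpoint: the leaf arc has transverse measure $0$ and length at most $1$ (in the model where the spike is $\{0\le x\le 1,\ y\ge 1\}$ a leaf has length $1/c\le 1$), and the edge segment has length equal to its transverse measure, which equals the transverse measure of the original component. Across a non-foliated region take a geodesic chord, of length at most the diameter, which is at most $1$. This replacement preserves the total transverse measure component by component, so the edge segments contribute exactly $i(F_\mu(g),\alpha)$ to the length; and since each of the $i(\alpha,\mu)$ arcs of $\alpha^q\setminus\mu$ meets at most two spikes and at most two non-foliated components (after putting $\alpha^q$ simultaneously in minimal position with respect to $\mu$ and transverse to $F_\mu(g)$, a bigon-removal step you also need to justify), the excess is at most $4\,i(\alpha,\mu)$.
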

 
We now prove Proposition \ref{lemma:fund2}. We start with the following lemma:

\begin{lemma}\label{lem:proj}
Let $l$ be a geodesic segment contained in the support of the horocyclic foliation of an ideal triangle. Then the projection of $l$ along the leaves of this foliation on any of the two boundary components of the ideal triangle is length-decreasing.
\end{lemma}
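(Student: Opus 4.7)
The plan is to reduce the claim to an elementary computation in the upper half-plane model of $\mathbb{H}^2$. Because the support of the horocyclic foliation of an ideal triangle is a disjoint union of three ``horn'' regions, one centered at each vertex, the geodesic segment $l$ must lie entirely in one such horn. I would normalize the picture by placing the vertex associated with this horn at the point $\infty$ of the upper half-plane. In this normalization, the two edges of the ideal triangle incident to the vertex become vertical Euclidean lines, say $x=a$ and $x=b$ with $a<b$, while the leaves of the horocyclic foliation centered at that vertex become horizontal segments $\{y=c,\ a\leq x\leq b\}$, defined for $c$ larger than some constant $y_0>0$ (the height at which the horocycle bounding the non-foliated region sits).

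Once this model is fixed, the projection along leaves of the foliation to one of the two boundary edges, say the edge $x=a$, is simply the map $\pi:(x,y)\mapsto (a,y)$. I would then parametrize $l$ as $t\mapsto (x(t),y(t))$ for $t\in[t_1,t_2]$ and invoke the length formula from \S\ref{ss:hyp} to write
\[
L(l)=\int_{t_1}^{t_2}\frac{\sqrt{x'(t)^2+y'(t)^2}}{y(t)}\,dt,
\qquad
L(\pi(l))=\int_{t_1}^{t_2}\frac{|y'(t)|}{y(t)}\,dt.
\]
The pointwise inequality $|y'(t)|\leq \sqrt{x'(t)^2+y'(t)^2}$ immediately gives $L(\pi(l))\leq L(l)$, which is the assertion of the lemma. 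Note that the same argument would work for any piecewise $C^1$ curve contained in the horn, not only for geodesic segments, and that equality can occur only when $x'(t)\equiv 0$, i.e., when $l$ is a vertical segment, in which case the projection is in fact an isometry.

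There is no real obstacle in this proof. The only point that requires a moment's attention is the verification that the leaves of the horocyclic foliation centered at the vertex at $\infty$ are indeed the horizontal Euclidean segments, which follows from the description of horocycles recalled in \S\ref{ss:horocycles}: in the upper half-plane model, horocycles centered at $\infty$ are precisely the horizontal lines $y=\mathrm{const}$, and they meet the vertical geodesics $x=a$ and $x=b$ orthogonally, as required for horocyclic leaves in an ideal triangle.
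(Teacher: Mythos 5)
Your proof is correct and is precisely the upper half-plane computation that the paper itself invokes but does not write out (it only remarks that the lemma ``is then a computation that can easily be done in the upper half-plane model'' using the length element of \S\ref{ss:hyp}). Normalizing so that the vertex of the relevant horn is at $\infty$, so that the leaves are the horizontal lines $y=\mathrm{const}$ and the projection is $(x,y)\mapsto(a,y)$, and then comparing $\int |y'|/y$ with $\int\sqrt{x'^2+y'^2}/y$, is exactly the intended argument.
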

Such a geodesic segment $l$ is a subsegment of one of the three segments represented in Figure \ref{fig:3cases}.
Note that for the proof of Lemma \ref{lem:proj}, it suffices to assume that the geodesic segment is transverse to the horocyclic foliation.
With this in mind, the proof of this lemma is then a computation that can easily be done in the upper half-plane model of the hyperbolic plane, using the formula we gave in \S \ref{ss:hyp} for the infinitesimal length element in that model.

\begin{proof}[Proof of the first inequality of Proposition \ref{lemma:fund2}]
Consider the closed geodesic in the class $\alpha$. There are three possibilities for the connected components of the intersection of this closed geodesic with the support of the horocyclic foliation of an ideal  triangle; they are represented in Figure \ref{fig:3cases}. These three cases correspond to the fact that such a segment has 2, 1 or 0 intersection points respectively with the boundary of the hyperbolic triangle. We modify each such connected component in the following way: In Cases (1) or (2),  we replace this segment, keeping its endpoints fixed, by a segment which is the concatenation of a segment contained in a leaf of the horocyclic foliation and a segment contained in the boundary of the ideal triangle. (Such a segment, for obvious reasons, is called a \emph{horogeodesic segment.}) In case (3), we push the segment in the non-foliated region. By Lemma \ref{lem:proj}, the total transverse measure of the segments obtained by this modification with respect to the horocyclic foliation $F_\mu(g)$ is bounded above by the length $l_\alpha$. Adding the transverses of all the new segments, we obtain $i(F_\mu(g),\alpha)\leq I(F_\mu(g),\alpha)\leq l_g(\alpha)$, which is the inequality we wanted.\end{proof}

\begin{figure}[htbp]
 \psfrag{(1)}{\small $(1)$}
  \psfrag{(2)}{\small $(2)$}
   \psfrag{(3)}{\small $(3)$}
\centering

\includegraphics[width=12cm]{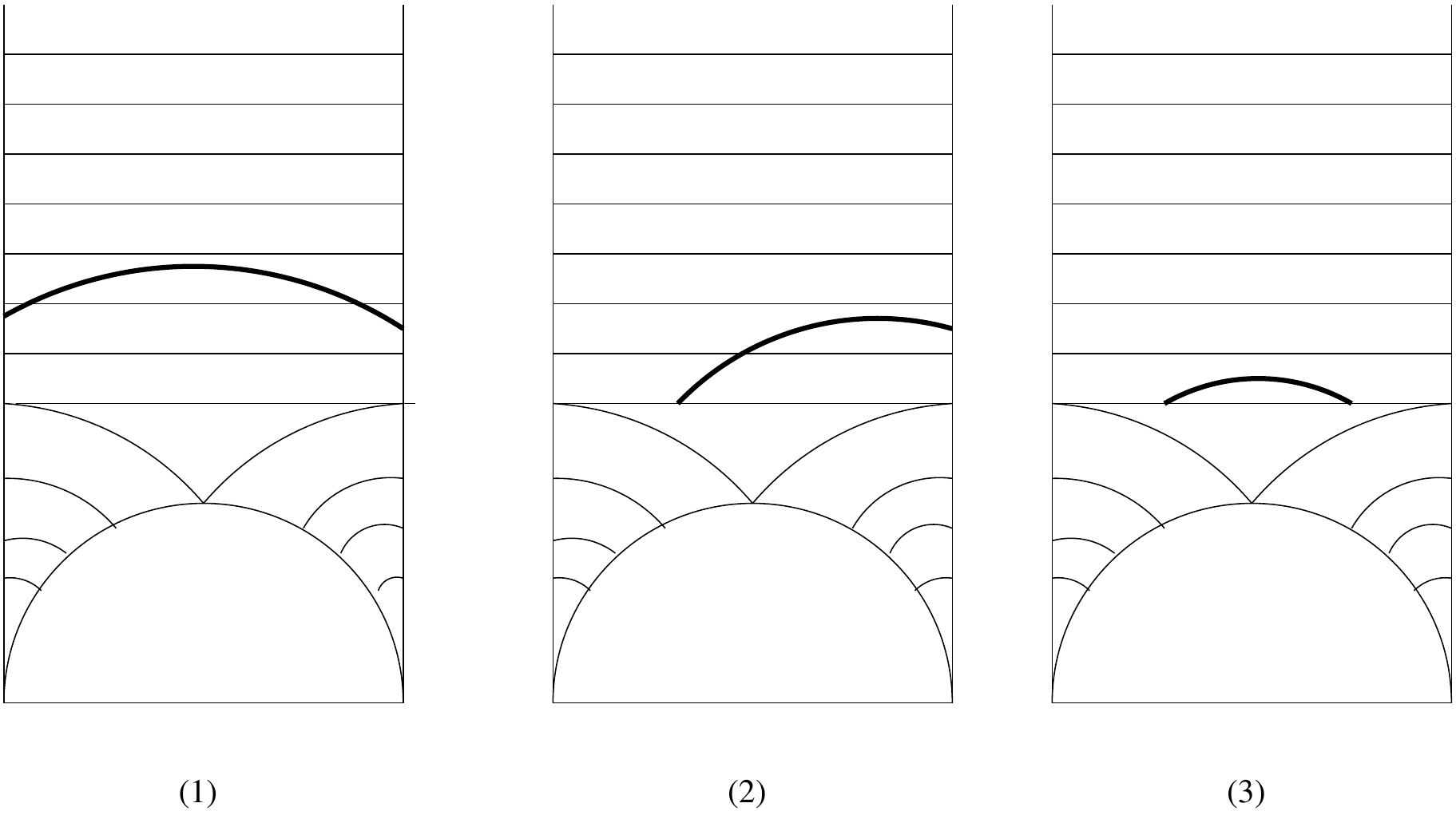}

\caption{\small The three cases for the connected components of the intersection of a geodesic segment with the support of the horocyclic foliation on an ideal triangle}
\label{fig:3cases}
\end{figure}

\begin{proof}[Proof of the second inequality of Proposition \ref{lemma:fund2}]

We start by representing $\alpha$ by a closed curve $\alpha'$ which is quasi-transverse to the horocyclic measured foliation $F_\mu(g)$ (see \S \ref{ss:quasi-transverse}). For the proof of the proposition, we may assume, without loss of generality, that $\alpha'$ is transverse to $F_\mu(g)$. Indeed, if $\alpha'$ has sub-segments contained in the leaves of $F_\mu(g)$, then the foliation $F_\mu(g)$ has a connection between singular points, but the set of measured foliations of the form $F_\mu(g)$ (with $g$ varying) that have no connections between singular points is dense in the space of all measured foliations that are of the form $F_\mu(g)$ for some $g$. Then, by continuity, it suffices to prove the desired inequality in the generic case where there are no such connections. 

Next, we can assume that $\alpha'$ has minimal number of intersection points in its homotopy class with $\mu$, that is, we assume that the number of intersection points between $\alpha'$ and $\mu$ is equal to $i(\alpha,\mu)$.
This can be done by eliminating all the discs embedded in $S$ whose boundaries consist of the union of an arc contained in a leaf of $\mu$ and an arc contained in $\alpha'$. The elimination is done keeping the transversality between the simple closed curve and the horocyclic measured foliation. This uses the fact that if there exists such a disc, then there exists an innermost one which is of one of the two tupes represented in Figure \ref{fig:minimal}, therefore we can eliminate all these discs by performing an isotopy on $\alpha'$, working at each step with innermost discs and making an induction on the number of intersection points between $\alpha'$ and $\mu$.
\begin{figure}[htbp]

\centering

\includegraphics[width=12cm]{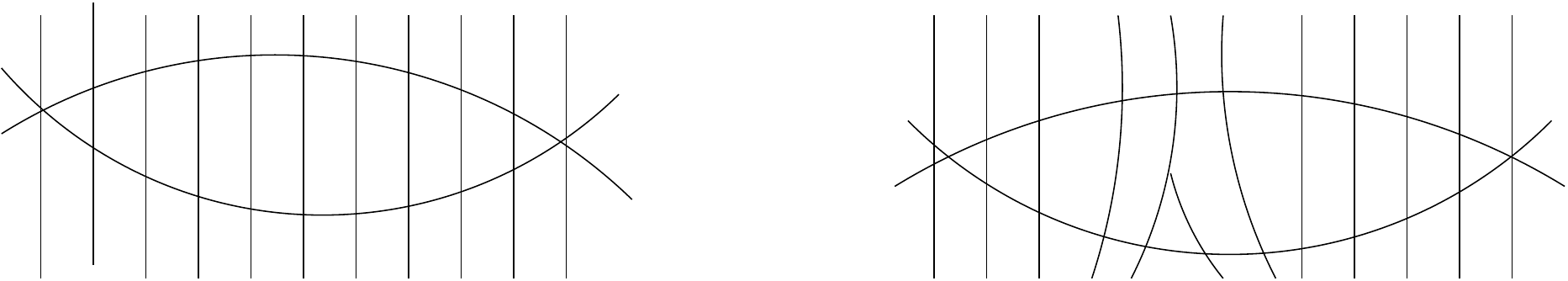}

\caption{\small A minimal disc whose boundary consists of an arc in a leaf of $\mu$ and an arc in $\alpha'$ that are both transverse to $F_\mu(g)$}
\label{fig:minimal}
\end{figure}

After these reductions, we let $a_1,\ldots,a_m$ be the connected components of the intersection of $\alpha'$ with the support of the horocyclic foliation of ideal  triangles in the complement of $\mu$, and $b_1, \ldots, b_k$ the connected components of the intersection of the same curve with the complement of the support of $F_\mu(g)$. We call these two classes of segments ``of type $a_i$" and ``of type $b_i$" respectively.

 We now replace each segment of type $b_i$ by a geodesic segment, keeping unchanged its extremities. The length of a resulting arc is bounded by 1 (which is the diameter of a non-foliated region).

The arcs of type $a_i$ are of type (1) or (2) represented in Figure \ref{fig:3cases}, with the additional property that they are transverse to the foliation $F_\mu(g)$. The number of segments of type $a_i$ is bounded above by $2i(\alpha,\mu)$. Since each segment $b_i$ is adjacent from each side to a segment of type $a_i$, the total number of segments of type $b_i$ is bounded above by the total number of segments of type $a_i$, therefore it is bounded above by $2 i(\alpha,\mu)$. Thus, the total length of all the arcs of type $b_i$ is bounded above by $2i(\alpha,\mu)$.

We replace now each arc $a_i$ keeping its endpoints fixed by a horogeodesic arc consisting of the concatenation of an arc $c_i$ contained in a leaf of the horocyclic foliation and an arc $d_i$ contained in a leaf of $\mu$. We let $\alpha''$ be the resulting horogeodesic curve in the homotopy class $\alpha$.

The number of arcs $c_i$ is bounded above by $2i(\alpha,\mu)$, therefore the total lengths of these segments is bounded above by $2i(\alpha,\mu)$.

Putting together these estimates, we get
\[l_g(\alpha'')\leq i(F_\mu(g),\alpha'')+4i(\alpha,\mu).\]
Since $l_g(\alpha)$ is bounded above by $l_g(\alpha'')$, we get the second inequality of Proposition \ref{lemma:fund2}.
This completes the proof of Proposition \ref{lemma:fund2}.
\end{proof}

The proof of the Fundamental Lemma (Lemma \ref{lemma:fund}) uses the same ideas as those of Proposition \ref{lemma:fund2}, but it holds for a general maximal geodesic lamination  $\mu$ instead of the finite geodesic lamination $\mu$ in the lemma. The arguments involving the connected components of the intersection of a simple closed curve with the complementary components of the lamination are replaced by the connected components of the intersection of a simple closed curve with a set of rectangles that are adapted to the lamination.

We conclude this section by giving more information on the convergence of a family of hyperbolic structures along a stretch line. 

Recall that a hyperbolic structure is determined by its marked simple length spectrum, that is, the collection of lengths of simple closed geodesics representing the elements of the set $\mathcal{S}$. Thus, information about the behavior of lengths of simple closed curves (and more generally of measured laminations) under a stretch line is also information about the hyperbolic structures that vary under a stretch line.
   The following result by Th\'eret gives information on the behavior of lengths of general measured geodesic laminations  under stretch lines. 

\begin{theorem}[Th\'eret, \cite{Theret-Thesis}, Theorem 3  p. 25 and Theorem 2 p. 62] \label{th1}   
Let $h:\mathbb{R}\to \mathcal{T}(S)$, $t\mapsto h(t)=h_t$ be a stretch line passing by a hyperbolic metric $h=h(0)$ and supported by a geodesic lamination $\mu$.
Let $L_{\mu}(h)$ be the measured geodesic lamination that corresponds to the horocyclic measured foliation $F_{\mu}(h)$. Then, for any measured geodesic lamination $\lambda$ on $S$, we have:
\begin{enumerate}
\item $\lim_{t\to\infty}l_{h_{t}}(\lambda)=0$ if and only if the support of $\lambda$ is contained in the support of $L_{\mu}(h)$.
\item $\lim_{t\to\infty}l_{h_{t}}(\lambda)=+\infty$ if and only if $i(\lambda,L_{\mu}(h))\not= 0$.

\item  If  the support of  $\lambda$ is disjoint from  the support of  $L_{\mu}(h)$, then, for $t\geq 0$, $l_{h_{t}}(\lambda)$ is bounded from above and from below by strictly positive constants.
\end{enumerate}

\end{theorem}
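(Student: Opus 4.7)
The plan is to combine the scaling behavior of the horocyclic foliation along the stretch line with the Fundamental Lemma (Lemma~\ref{lemma:fund}), and then to address the ``length tends to $0$'' part of (1) and the lower bound in (3) by separate geometric arguments.

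By definition $h_t = \phi_\mu^{-1}(e^t F_\mu(h))$, so $F_\mu(h_t) = e^t F_\mu(h)$. Since the correspondence $\mathcal{MF}\leftrightarrow\mathcal{ML}$ preserves intersection numbers, one has $i(\beta, F_\mu(h_t)) = e^t\, i(\beta, L_\mu(h))$ for every $\beta \in \mathcal{ML}$. Applying the Fundamental Lemma first to simple closed curves, and then extending to $\mathcal{ML}$ by density of weighted simple closed geodesics (Proposition~\ref{prop:extension}), by homogeneity of length functions (Proposition~\ref{homogeneity}), and by continuity of length and intersection functions, one obtains
\[
e^t\, i(\lambda, L_\mu(h)) \le l_{h_t}(\lambda) \le e^t\, i(\lambda, L_\mu(h)) + C(\lambda),
\]
for some constant $C(\lambda) \ge 0$ depending only on $\lambda$. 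Statement (2) follows at once: the lower bound gives $l_{h_t}(\lambda)\to\infty$ when $i(\lambda, L_\mu(h)) \neq 0$, while the upper bound forces boundedness in the complementary case. The same bounds immediately yield the upper bound in (3), as well as the direction of (1) that says $l_{h_t}(\lambda) \to 0$ implies $i(\lambda, L_\mu(h)) = 0$.

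The substantive remaining points are the converse of (1) (showing $l_{h_t}(\lambda)\to 0$ when the support of $\lambda$ lies in the support of $L_\mu(h)$) and the lower bound in (3). For the converse of (1), I would first treat the case of a weighted closed leaf $\gamma$ of $\mu$ inside the stump of $L_\mu(h)$: the leaves of $F_\mu(h)$ spiral into $\gamma$ inside its standard collar, whose geometry is controlled by $l_{h_t}(\gamma)$ via the collar lemma, and an explicit computation in hyperbolic coordinates should show that scaling the transverse horocyclic measure by $e^t$ can only be accommodated by an exponential decay of $l_{h_t}(\gamma)$. The general case follows by additivity under disjoint unions, homogeneity, and density of weighted leaves in the space of sub-laminations of $L_\mu(h)$. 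For the lower bound in (3), if the support of $\lambda$ is disjoint from $L_\mu(h)$ then $\lambda$ lives in the complementary regions of the stump; there the horocyclic foliation carries no transverse measure and the stretch amounts only to a bounded deformation of the local geometry, so a compactness argument inside the associated reduced Teichm\"uller space yields a uniform positive lower bound for $l_{h_t}(\lambda)$.

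The main obstacle will be the collar-analysis step for closed leaves of the stump: turning the geometric intuition that ``stretching the horocyclic transverse measure by $e^t$ tightens the collar and hence shortens the core'' into a quantitative decay estimate on closed-leaf lengths, uniform enough to pass, via homogeneity and density, to arbitrary measured sub-laminations of $L_\mu(h)$.
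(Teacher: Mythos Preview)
Your use of the Fundamental Lemma to obtain the lower bound $e^t\,i(\lambda,L_\mu(h))\le l_{h_t}(\lambda)$ for all $\lambda\in\mathcal{ML}$ is fine, since that inequality passes to limits without any auxiliary constant. But the extension of the upper bound by ``density, homogeneity and continuity'' is not justified: approximating $\lambda$ by weighted curves $w_k\alpha_k$ gives $w_k\,l_{h_t}(\alpha_k)\le w_k\,i(F_\mu(h_t),\alpha_k)+w_k\,C(\alpha_k)$, and you have no control on $w_k\,C(\alpha_k)$ as $k\to\infty$ (typically $C(\alpha_k)$ grows with the complexity of $\alpha_k$). The paper explicitly flags that a genuine generalization of the Fundamental Lemma to measured laminations is needed (it cites \cite[Corollary~2, p.~61]{Theret-Thesis}); this is not a formal density step.

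The more serious gap is your plan for the ``if'' direction of (1). You propose to start from a ``closed leaf $\gamma$ of $\mu$ inside the stump of $L_\mu(h)$'' and then use density of weighted closed leaves among sub-laminations of $L_\mu(h)$. First, the phrasing is confused: a closed leaf of $\mu$ is transverse to $L_\mu(h)$, not contained in it, while if you mean a closed leaf of $L_\mu(h)$ then the leaves of $F_\mu(h)$ near $\gamma$ are parallel to $\gamma$, not spiraling into it, so the collar picture you describe does not apply. Second, and fatally, $L_\mu(h)$ need not have any closed leaves at all (take $L_\mu(h)$ minimal and arational); there is then nothing to approximate by, and the argument collapses. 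The paper's approach is entirely different: one introduces a notion of \emph{length of a measured foliation} and proves a direct exponential estimate $l_{h_t}(L_\mu(h_t))\le A\,e^{t-e^t}$ by exhibiting, for each $t$, a partial foliation equivalent to $F_\mu(h_t)$ whose length tends to $0$. Since $L_\mu(h_t)=e^tL_\mu(h)$ this gives $l_{h_t}(L_\mu(h))\to 0$, and the case of a general $\lambda$ supported in $L_\mu(h)$ follows. This estimate, not a collar/density argument, is the key input you are missing.
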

 The proof of (1) uses the following result, which is proved in \cite[Theorem 1  p. 45]{Theret-Thesis}: Under the hypotheses of the theorem, suppose that the horocyclic measured lamination $L_{\mu}(h)$ is not empty, that is, that the horocyclic measured foliation does not consist entirely of closed leaves parallel to the cusps. Then there exists a positive constant $A$ such that for any $t\geq 0$, we have
 \[l_{h_{t}}(L_{\mu}(h_t))\leq Ae^{t-e^{t}}
.\]
This implies that 
 \[ \lim_{t\to\infty}l_{h_{t}}(L_{\mu}(h))=0
 \]
 and that this convergence is exponential. The proof of this fact uses the notion of the length of a measured foliation (see Definition 3.2 in \cite{1991a}), and the fact for each $t\geq 0$ one can find a partial measured foliation $F_t$ in the same equivalence class as $F_{\mu}(h_t)$ whose length converges to 0 as $t\to\infty$.

 Note that in the case where the surface $S$ has cusps and where $\mu$ is an ideal triangulation, if the hyperbolic structure $h$ corresponds to a \emph{symmetric gluing} of the ideal triangles that compose the complement of $\mu$ (that is, if all the shift parameters are zero), then the horocyclic foliation consists entirely of closed leaves parallel to the cusps, and there is no stretch line starting at $h$ supported by $\mu$ (in fact, stretching along such a $\mu$ does not change the hyperbolic structure).
 
Conclusion (3) in Theorem \ref{th1} says that if the support of $\lambda$ is disjoint from the support of the measured geodesic lamination $L_{\mu}(h)$ then, along the positive part of the stretch line,  its length can vary only by  a bounded amount.

There is an analogue of  Theorem \ref{th1} which concerns the behavior of length functions of measured laminations along anti-stretch lines (stretch lines traversed in the negative direction), and which is also due to Th\'eret. We shall review this in Theorem \ref{th2} below.

The proof of Theorem \ref{th1}, which covers a substantial part of the paper \cite{Theret-Thesis}, involves in particular a generalization of the double inequality in the Fundamental Lemma (Lemma \ref{lemma:fund}) to the case where the simple closed curve $\gamma$ is replaced by a general measured geodesic lamination; cf. \cite[Corollary 2, p. 61]{Theret-Thesis}.

\section{The relative asymptotic behavior of pairs of stretch lines}\label{s:pairs}
In this section, we review some results due to Th\'eret on the asymptotic behavior of pairs of stretch lines. They concern  questions of parallelism and divergence of such pairs.
Before stating the results, we need to recall some terminology.
We start with the following notion introduced by Thurston in his paper \cite{Thurston1986}:

The \emph{stump} of a geodesic lamination is the maximal (in the sense of inclusion) sublamination that can be equipped with some transverse measure. (Recall that, by definition, a transverse measure  for a lamination must be of full support and finite on compact sets).

Any geodesic measured lamination $\mu$ which is non complete can be completed (in a non-unique way) into a maximal geodesic lamination whose stump is $\mu$, by adding infinite geodesics that, in each direction, either converge to a cusp of the surface or spiral along leaves of $\mu$. Here, the expression ``spiraling along leaves of $\mu$," for a given half-leaf of an infinite geodesic means that either this half-leaf spirals along a simple closed geodesic of $\mu$ (the operation we considered in \S \ref{gluing}), or that the half-leaf enters a cusp of some component of $S\setminus \mu$. Another way of expressing the fact that a half-leaf $l$ of a bi-infinite geodesic on a surface spirals along a half-leaf $l'$ of a geodesic lamination $\mu$  is to say that the two geodesic rays $l$ and $l'$ have lifts to the universal cover of the surface that converge to the same point at infinity.

The stump of a geodesic lamination $\mu$ is empty if and only if the lamination consists of non-compact leaves that converge in both directions towards cusps of the surface. A maximal geodesic lamination with empty stump is an ideal triangulation.
 
The \emph{stump} of a stretch line is the stump of the maximal geodesic lamination that directs it.

A \emph{multicurve} on $S$ is a collection of disjoint essential simple closed curves no two of which are homotopic.  A \emph{weighted muticurve} is a multicurve equipped with a positive weight for each component. A weighted multicurve gives a well-defined element of $\mathcal{LM}$ (or, equivalently, of $\mathcal{MF}$).  

We also recall that with a stretch line on a hyperbolic surface is associated  a horocyclic foliation called its direction (\S \ref{s:stretch}). A stretch line is then said to be \emph{cylindrical} if its direction has only compact leaves. Thus, the direction of a cylindrical stretch line, as a measured foliation, is associated with a weighted multicurve of $S$.

A geodesic lamination is said to be \emph{chain recurrent}  if it is the limit, in the Hausdorff topology of compact subsets of the surface, of a sequence of geodesic laminations that are multicurves.

Finally, we also need the following notions introduced by Th\'eret \cite{T1}:

Two stretch lines $h:[0,\infty)\to \mathcal{T}$ and $g:[0,\infty)\to \mathcal{T}$ are said to \emph{diverge} if $d(g_t,h_t)\to\infty$ and $d(h_t,g_t)\to\infty$ as $t\to\infty$.

Two stretch lines $h:[0,\infty)\to \mathcal{T}$ and $g:[0,\infty)\to \mathcal{T}$ are said to be \emph{parallel} if, up to a parameter change, the two distances $d(g_t,h_t)$ and $d(h_t,g_t)$ are bounded  as $t\to\infty$.

  Th\'eret proved the following results on cylindrical stretch lines.

\begin{theorem}[\cite{T1}, Theorem 5.2]  \label{th:5.2} Two cylindrical stretch lines in Teichm\"uller space whose directions, as projective classes of measured foliations, are distinct, diverge. 
\end{theorem}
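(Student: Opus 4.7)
The plan is to work with the $K$-characterization of Thurston's metric supplied by Theorem \ref{K=L}: to prove $K(g_t,h_t)\to\infty$, and symmetrically $K(h_t,g_t)\to\infty$, it suffices to exhibit, for $t$ large, simple closed curves on $S$ for which the length ratio $l_{h_t}(\gamma)/l_{g_t}(\gamma)$ (respectively $l_{g_t}(\gamma)/l_{h_t}(\gamma)$) grows without bound. The main input will be Theorem \ref{th1}, applied to each cylindrical stretch line separately: for a cylindrical stretch with direction $F$ (a weighted multicurve), a simple closed curve $\alpha$ satisfies $l_t(\alpha)\to\infty$ when $i(\alpha,F)>0$, $l_t(\alpha)\to 0$ when $\alpha$ is a component of $F$, and $l_t(\alpha)$ stays bounded above and below by positive constants when $\alpha$ is disjoint from $\operatorname{supp}F$.

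Write the two directions as weighted multicurves $F_h=\sum k_i\gamma_i$ and $F_g=\sum l_j\delta_j$ with $[F_h]\neq[F_g]$. I first treat the case in which the underlying curve systems $\{\gamma_i\}$ and $\{\delta_j\}$ are not equal. Up to swapping the two stretch lines, there is a component $\delta$ of $F_g$ which is not a component of $F_h$; either $\delta$ intersects some component of $F_h$, in which case $l_{h_t}(\delta)\to\infty$, or $\delta$ is disjoint from $\operatorname{supp}F_h$, in which case $l_{h_t}(\delta)$ is bounded below. Since $\delta\in\operatorname{supp}F_g$ one has $l_{g_t}(\delta)\to 0$ in both situations, so $l_{h_t}(\delta)/l_{g_t}(\delta)\to\infty$ and $K(g_t,h_t)\to\infty$. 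For the reverse direction, either $F_h$ also admits a component absent from $F_g$ (and a symmetric argument applies), or $\operatorname{supp}F_h\subsetneq\operatorname{supp}F_g$. In the latter sub-case I would construct a simple closed curve $\beta$ meeting $\delta$ transversally but disjoint from $\operatorname{supp}F_h$; such $\beta$ exists because the component of $S\setminus\operatorname{supp}F_h$ containing $\delta$ cannot be merely an annular neighborhood of $\delta$ (otherwise $\delta$ would be isotopic into $\operatorname{supp}F_h$), so it carries a curve transverse to $\delta$. Then $l_{g_t}(\beta)\to\infty$ while $l_{h_t}(\beta)$ is bounded, giving $K(h_t,g_t)\to\infty$.

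The remaining and delicate case is $\operatorname{supp}F_h=\operatorname{supp}F_g=\{\gamma_1,\dots,\gamma_m\}$ with the weight vectors $(k_i)$ and $(l_i)$ projectively distinct. The witnesses here will be the components $\gamma_i$ themselves, each of which is pinched along both stretch lines, so Theorem \ref{th1} alone only yields $l_{h_t}(\gamma_i),l_{g_t}(\gamma_i)\to 0$ and is insufficient. I plan to use a sharper, weight-dependent estimate: the horocyclic annulus around $\gamma_i$ has transverse measure growing like $e^t k_i$ under the stretch in direction $F_h$, and the hyperbolic width/length relation for a cylinder bounded by horocycles forces $l_{h_t}(\gamma_i)\sim Ce^{-ck_ie^t}$ (with an analogous formula for $g_t$ with weight $l_i$). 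After normalizing so that $\sum k_i=\sum l_i$, non-proportionality produces indices $i,j$ with $k_i>l_i$ and $k_j<l_j$; the ratios $l_{h_t}(\gamma_j)/l_{g_t}(\gamma_j)$ and $l_{g_t}(\gamma_i)/l_{h_t}(\gamma_i)$ then both tend to $+\infty$ like $e^{c|k-l|e^t}$, yielding $K(g_t,h_t)\to\infty$ and $K(h_t,g_t)\to\infty$.

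The hard part will be the quantitative rate estimate needed in this last case: the qualitative trichotomy of Theorem \ref{th1} must be supplemented by an explicit super-exponential decay rate whose constant is governed by the weight of the component in the horocyclic foliation. This refinement rests on the geometry of maximal horocyclic neighborhoods around short geodesics, the same circle of ideas that underpins the proof of Theorem \ref{th1} in \cite{Theret-Thesis}; once it is in place, the combinatorial case analysis above provides divergence in all remaining situations.
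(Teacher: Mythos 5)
Your proposal is correct and follows essentially the route the paper indicates: the distinct-support cases are dispatched by the qualitative trichotomy of Theorem \ref{th1}, while the equal-support, projectively-distinct-weights case rests on exactly the weight-governed super-exponential decay you conjecture, namely $l_{h_t}(\gamma)\sim 2\sqrt{k}\,e^{-e^{t}w/2}$, which is Th\'eret's Theorem 5.1 of \cite{T1}, quoted in the text right after the statement as the key quantitative refinement of Theorem \ref{th1} on which the proof depends. The one point to watch is that your normalization $\sum k_i=\sum l_i$ is a time-shift of one of the rays, so strictly speaking you obtain divergence up to a parameter change, matching the convention used in the companion definition of parallelism rather than the literal matched-parameter phrasing of divergence given in the text.
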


\begin{theorem}[\cite{T1}, Theorem 6.1]  \label{th:5.3} Two cylindrical stretch lines  in Teichm\"uller space, with some appropriate parametrization, are parallel if and only if they converge to the same point on Thurston's boundary.
\end{theorem}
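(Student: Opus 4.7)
The plan is to handle the two implications separately: the ``only if'' direction reduces almost immediately to Theorems \ref{th:conv-stretch} and \ref{th:5.2}, while the ``if'' direction requires a case-by-case comparison of the length spectra along the two stretch lines.

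For the ``only if'' direction, suppose $h$ and $g$ are parallel, so that after a time-shift both $d(h_t,g_t)$ and $d(g_t,h_t)$ remain bounded as $t\to\infty$. In particular neither distance tends to infinity, so by the contrapositive of Theorem \ref{th:5.2} the two cylindrical stretch lines share the same direction in $\mathcal{PMF}$; applying Theorem \ref{th:conv-stretch}, they converge to this common direction on Thurston's boundary.

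For the ``if'' direction, suppose both stretch lines converge to the same boundary point $[\alpha]\in\mathcal{PMF}$. By Theorem \ref{th:conv-stretch}, the projective classes of their horocyclic foliations both equal $[\alpha]$; since both stretch lines are cylindrical, $\alpha$ is represented by a weighted multicurve $\sum_i k_i\alpha_i$. Shifting $g$ in time by an appropriate constant (the ``appropriate parametrization'' of the statement) I arrange $F_{\mu_h}(h_t)=F_{\mu_g}(g_t)=e^t\alpha$ for every $t\geq 0$. By Theorem \ref{K=L},
\[
d(h_t,g_t)=\log\sup_{\gamma\in\mathcal{S}}\frac{l_{g_t}(\gamma)}{l_{h_t}(\gamma)},
\]
so the task is to bound this supremum uniformly in $t$; a symmetric argument will handle $d(g_t,h_t)$. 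I partition $\mathcal{S}$ into three families according to the interaction of $\gamma$ with $\alpha$ and treat each separately.

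If $i(\gamma,\alpha)>0$, Lemma \ref{lemma:fund} applied to both stretch lines gives $l_{h_t}(\gamma)=e^t i(\alpha,\gamma)+O_h(\gamma)$ and $l_{g_t}(\gamma)=e^t i(\alpha,\gamma)+O_g(\gamma)$; the leading terms cancel and the log ratio is $O(e^{-t})$, uniformly in $\gamma$ because $i(\alpha,\gamma)\geq\min_i k_i>0$ whenever $\gamma$ meets $\alpha$ essentially. If $\gamma=\alpha_i$ is a component of the multicurve, the upper bound $l_{h_t}(L_\mu(h_t))\leq Ae^{t-e^t}$ used in the proof of Theorem \ref{th1}(1) gives $l_{h_t}(\alpha_i)\leq A'e^{-e^t}$, and a matching lower bound of the same order (derived by a similar analysis on the horocyclic structure) makes the ratio bounded. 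The remaining case, $\gamma$ disjoint from $\alpha$, is the technical heart: Theorem \ref{th1}(3) keeps the lengths between positive constants, but the bounds a priori depend on $\gamma$. I would establish uniformity by showing that the restrictions of $h_t$ and $g_t$ to the complement of a collar of $\alpha$ converge as $t\to\infty$ to hyperbolic structures on the noded subsurface $S\setminus\alpha$ depending continuously on the initial data, so that the ratio converges to a ratio of lengths computed in two fixed hyperbolic structures, uniformly controlled by the (finite) Thurston distance between them in the Teichm\"uller space of the subsurface. Combining the three cases gives the desired uniform bound and completes the proof.
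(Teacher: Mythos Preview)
Your ``only if'' direction is correct and matches the paper's route: if the two rays do not diverge, Theorem~\ref{th:5.2} forces the same projective direction, and Theorem~\ref{th:conv-stretch} identifies the common boundary limit.

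The ``if'' direction, however, has a real gap in Case~1. The Fundamental Lemma (Lemma~\ref{lemma:fund}) gives $l_{h_t}(\gamma)=e^t i(\alpha,\gamma)+O_h(\gamma)$ with an error bounded by a constant $C(\gamma)$ that \emph{depends on the curve} $\gamma$; in the finite-lamination version (Proposition~\ref{lemma:fund2}) this constant is $4\,i(\gamma,\mu)$, and in the general case it is governed by how many times $\gamma$ crosses a fixed family of rectangles adapted to $\mu$. Your bound on the log-ratio is therefore of order $C(\gamma)\big/\bigl(e^t i(\alpha,\gamma)\bigr)$, and uniformity in $\gamma$ would require $C(\gamma)/i(\alpha,\gamma)$ to be bounded over all curves meeting $\alpha$. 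The inequality $i(\alpha,\gamma)\geq\min_i k_i$ controls only the denominator: a simple closed curve can cross the supporting lamination $\mu_h$ or $\mu_g$ arbitrarily many times while meeting the multicurve $\alpha$ only once, so $C(\gamma)/i(\alpha,\gamma)$ is not a~priori bounded. This is exactly where the supremum defining $K(h_t,g_t)$ could escape, and your argument does not rule it out.

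Case~2 is also incomplete. The crude upper bound $l_{h_t}(L_\mu(h_t))\leq Ae^{t-e^t}$ does not by itself yield a lower bound of the same order on the individual components $\alpha_i$, and ``a similar analysis'' hides the substantive work. This is precisely where the paper, following Th\'eret, invokes the sharp two-sided asymptotic $l_{h_t}(\gamma)\sim 2\sqrt{k}\,e^{-e^t w/2}$ for each cylinder component (the statement immediately following Theorem~\ref{th:5.3}). Note that the exponent depends on the individual weight $w=k_i$, so even the \emph{order} of decay varies from component to component, and the combinatorial constant $k$ depends on the supporting lamination; matching these between $h_t$ and $g_t$ is what makes the ratio bounded. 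The paper's proof of both Theorems~\ref{th:5.2} and~\ref{th:5.3} goes through this precise asymptotic rather than through the Fundamental Lemma, and your Case~3 sketch (convergence on the complementary subsurface) is in the same spirit as the required analysis but, as you acknowledge, is only a sketch.
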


The proofs of Theorems \ref{th:5.2} and \ref{th:5.3} are based on quantitative estimates of the behavior at infinity of the lengths of the cylinders that are the components of the direction of the cylindrical stretch lines. These estimates are precise versions of the convergence in Theorem \ref{th1}, and they  include the following result:
\begin{theorem}[Th\'eret, Theorem 5.1 of \cite{T1}]
Let $t\mapsto h_t$ be a cylindrical stretch line whose support $\mu$ is chain-recurrent geodesic lamination and let $\lambda$ be a component of the direction of $h_t$, which is a simple closed curve $\gamma$ weighted by a real number $w$. Then, we have, as $t\to\infty$,
\[l_{h_{t}}(\lambda)\sim 2\sqrt{k}e^{-e^{t}w/2}
,\]
where $k$ is the product of two numbers, each of them being the number of non-foliated regions of the horocyclic foliation which are adjacent to one of the two boundaries of the maximal cylinder with core curve $\gamma$ contained in the direction of the stretch line.

\end{theorem}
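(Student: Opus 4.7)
Since $\lambda = w\gamma$ is a simple closed curve weighted by $w$, Proposition~\ref{homogeneity} gives $l_{h_t}(\lambda) = w\,l_{h_t}(\gamma)$, so the statement is equivalent to showing $l_{h_t}(\gamma)\sim (2\sqrt{k}/w)\,e^{-e^t w/2}$ as $t\to\infty$. By Theorem~\ref{th:cataclysm}, along the stretch line the horocyclic foliation scales as $F_\mu(h_t) = e^t F_\mu(h_0)$, so at time $t$ the component of $F_\mu(h_t)$ corresponding to $\gamma$ has weight $W_t := e^t w$. Geometrically, this component is realized in $(S,h_t)$ as a maximal annular region $C_t$ foliated by closed horocyclic leaves homotopic to $\gamma$, of transverse horocyclic width exactly $W_t$. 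The two singular boundary leaves $\partial_\pm C_t$ carry $k_1$ and $k_2$ tripod singularities respectively; chain-recurrence of $\mu$ ensures that these numbers and the combinatorial structure of $C_t$ are invariants of $\mu$ near $\gamma$, and in particular do not depend on $t$.

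\textbf{Lifting to $\mathbb{H}^2$ and the key identity.} The plan is to work in the universal cover. Realize $(S,h_t) = \mathbb{H}^2/\Gamma_t$ in the upper half-plane, place a lift $\tilde\gamma$ of $\gamma$ along the imaginary axis, and let $T(z) = e^l z$ (with $l := l_{h_t}(\gamma)$) be the deck transformation associated with $\gamma$. Let $\tilde C_t \supset \tilde\gamma$ be the $T$-invariant lift of $C_t$: its two boundaries $\partial_\pm\tilde C_t$ are $T$-invariant concatenations of horocyclic arcs, each carried by a horocycle centered at an endpoint of some lift of $\gamma$, and each fundamental domain of $\partial_+\tilde C_t$ contains $k_1$ tripods (similarly $k_2$ on the other side). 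I would then choose a transversal $\sigma \subset \tilde C_t$ crossing $\tilde\gamma$ perpendicularly at a single point (away from any tripod); by construction, its total horocyclic transverse measure equals $W_t$. Decomposing $\sigma$ into its pieces across the successive sectors of the horocyclic foliation on each side of $\tilde\gamma$, and computing each piece via the length element $|dx|/y$ in the upper half-plane (together with the corresponding formulas for horocycles centered at finite boundary points), a careful summation should produce an asymptotic identity of the form
\[w\,\sinh(l/2)\,e^{W_t/2}\;=\;\sqrt{k}\,(1+o(1))\qquad (l\to 0).\]
Using $\sinh(l/2)\sim l/2$ as $l\to 0$ then gives $w\,l \sim 2\sqrt{k}\,e^{-W_t/2}$, from which the desired asymptotic $l_{h_t}(\lambda) = w\,l_{h_t}(\gamma)\sim 2\sqrt{k}\,e^{-e^t w/2}$ follows.

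\textbf{Main obstacle.} The principal difficulty is the hyperbolic-trigonometric summation producing the identity $w\sinh(l/2)e^{W_t/2}\sim\sqrt{k}$. Carrying this out will require tracking the positions of the $k_1+k_2$ tripods on $\partial_\pm\tilde C_t$, expressing each horocyclic-arc contribution in closed form, and controlling remainder terms uniformly as $l\to 0$. I expect the combinatorial factor $\sqrt{k_1 k_2}$ to emerge as a geometric mean between the contributions from the two sides of $\tilde\gamma$. The calculation is conceptually analogous to that of the Collar Lemma, but with the smooth equidistant boundary of the embedded collar replaced here by a singular piecewise-horocyclic curve whose complexity—captured by $k_1, k_2$—modifies the leading constant. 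The chain-recurrence hypothesis on $\mu$ enters structurally, guaranteeing that $C_t$ remains a genuine cylinder of full transverse width $W_t$ throughout the stretch so that the tripod counts $k_1, k_2$ stay constant in $t$.
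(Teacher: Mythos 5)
There is a genuine gap: the proposal does not prove the theorem, it restates it. Your ``key identity'' $w\,\sinh(l/2)\,e^{W_t/2}=\sqrt{k}\,(1+o(1))$ is, after the substitution $\sinh(l/2)\sim l/2$ and $W_t=e^tw$, exactly equivalent to the conclusion $l_{h_t}(\lambda)\sim 2\sqrt{k}\,e^{-e^tw/2}$; you have reverse-engineered it from the answer and then labelled its derivation the ``main obstacle'' without carrying it out. But that derivation is the entire analytic content of the theorem: everything else in your write-up (the reduction via Proposition~\ref{homogeneity}, the identification of the transverse width of the maximal cylinder with $e^tw$, the tripod counts $k_1,k_2$ on the two boundary leaves, the lift to the upper half-plane with deck transformation $z\mapsto e^l z$) is correct but routine bookkeeping. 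In particular, nothing in the proposal actually produces the constant $2\sqrt{k_1k_2}$; the remark that it ``should emerge as a geometric mean between the contributions from the two sides'' is a guess, not an argument, and the claimed role of chain-recurrence is likewise asserted rather than established. Note also that the surveyed paper itself contains no proof of this statement --- it is quoted from Th\'eret \cite{T1} --- so the burden of the hyperbolic-trigonometric computation (locating the tripod points on the piecewise-horocyclic boundary leaves of the lifted cylinder, expressing the transverse measure of a crossing arc in terms of $l$, and controlling the error uniformly as $l\to 0$) cannot be discharged by reference; it is precisely what a proof must supply, and here it is missing.

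One further caution for when you do attempt the computation: the transverse measure of the horocyclic foliation is a topological invariant of a crossing arc (every arc crossing the cylinder once has measure $W_t$), so measuring it along a geodesic perpendicular to $\tilde\gamma$ gives no geometric information by itself. The length--versus--transverse-measure comparison has to be made along the leaves of $\mu$ spiralling onto $\gamma$ (where transverse measure coincides with arc length), and then transported to the core geodesic; this is where the exponential contraction of horocyclic arcs and the counts $k_1,k_2$ of non-foliated regions actually enter. Your sketch conflates these two measurements, and the argument as written would not close even if the summation were attempted.
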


Another result about cylindrical stretch lines is the following:

\begin{theorem}[Th\'eret, Theorem 6.2 \cite{T1}] \label{t:LM} Two cylindrical stretch lines are parallel if and only if they have the same direction.
\end{theorem}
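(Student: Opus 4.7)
The plan is to combine Theorem \ref{th:5.2} with the sharp cylinder asymptotic of Th\'eret stated just above, in order to compare the two stretch lines via their length functions, and then invoke Theorem \ref{K=L} to translate boundedness of length ratios into boundedness of the Thurston distance.

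For the implication \emph{same direction implies parallel}, write the common cylindrical direction as a weighted multicurve $F=\sum w_i\gamma_i$. Th\'eret's sharp estimate applied to each of the two stretch lines gives
\[
l_{h_t}(\gamma_i)\sim 2\sqrt{k_i^h}\,e^{-e^t w_i/2},\qquad l_{g_t}(\gamma_i)\sim 2\sqrt{k_i^g}\,e^{-e^t w_i/2},
\]
so the ratios $l_{g_t}(\gamma_i)/l_{h_t}(\gamma_i)$ converge to finite positive constants. For a general $\alpha\in\mathcal{S}$ one invokes the Fundamental Lemma (Lemma \ref{lemma:fund}): when $i(F,\alpha)>0$, both $l_{h_t}(\alpha)$ and $l_{g_t}(\alpha)$ are equal to $e^t\,i(F,\alpha)$ up to an additive error, and the ratio tends to $1$; when $i(F,\alpha)=0$, part (3) of Theorem \ref{th1} bounds both lengths between fixed positive constants. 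Taking the supremum over $\alpha\in\mathcal{S}$ and using $K=L$ yields boundedness of $L(h_t,g_t)$ and $L(g_t,h_t)$, hence parallelism.

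For the converse, assume the two cylindrical stretch lines are parallel. Then in particular they do not diverge, so by Theorem \ref{th:5.2} their directions must agree as projective classes of measured foliations. Writing the two directions as $F_h=\sum w_i\gamma_i$ and $F_g=c\sum w_i\gamma_i$ for some $c>0$, the sharp estimate gives
\[
\log\frac{l_{g_t}(\gamma_i)}{l_{h_t}(\gamma_i)}=\tfrac{1}{2}e^t w_i(1-c)+O(1).
\]
If $c\neq 1$ the right-hand side tends exponentially to $+\infty$ or $-\infty$, forcing one of $L(h_t,g_t)$ or $L(g_t,h_t)$ to be unbounded and contradicting parallelism. Hence $c=1$ and the two directions coincide exactly as measured foliations.

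The main obstacle is in the first implication: making the bounds uniform over all $\alpha\in\mathcal{S}$, because the additive constant $C(\alpha)$ of Lemma \ref{lemma:fund} depends on $\alpha$ and a priori may grow without control. The key point is that $C(\alpha)$ is governed by the combinatorics of the intersections of $\alpha$ with the supporting lamination, and in the cylindrical setting this combinatorial quantity is controlled by $i(F,\alpha)/\min_i w_i$ when that intersection is nonzero; this should allow one to dominate the error term by a uniform multiple of the main term $e^t\,i(F,\alpha)$ and close the argument.
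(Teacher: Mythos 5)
Your architecture is plausible, but the step you yourself flag as the main obstacle in the direction ``same direction implies parallel'' is a genuine gap, and the repair you sketch is false. You need $\sup_{\alpha\in\mathcal{S}} l_{g_t}(\alpha)/l_{h_t}(\alpha)$ to be bounded, and you propose to dominate the additive error $C(\alpha)$ of Lemma \ref{lemma:fund} (which in the setting of Proposition \ref{lemma:fund2} is $4\,i(\alpha,\mu)$, an intersection with the supporting lamination $\mu$, not with the direction $F$) by a multiple of $i(F,\alpha)$. This fails: let $\alpha_n$ be the image of a curve $\beta$ with $i(\beta,\gamma_1)>0$ under the $n$-th power of the Dehn twist about a core curve $\gamma_1$ of the direction. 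Since the $\gamma_j$ are pairwise disjoint, $i(\alpha_n,\gamma_j)$ stays bounded for every $j$, so $i(F,\alpha_n)$ is bounded; but $\gamma_1$ crosses the maximal lamination $\mu$ essentially (the complementary regions of $\mu$ are ideal triangles), so $i(\alpha_n,\mu)$ grows linearly in $n$. Hence $C(\alpha_n)/i(F,\alpha_n)\to\infty$, the error term is not dominated by the main term $e^t i(F,\alpha_n)$, and your pointwise limits do not pass to the supremum. For such twisted curves the length is governed by the geometry of the thin collars around the $\gamma_j$, and controlling the supremum is exactly where Th\'eret's quantitative thin--thick estimates enter. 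The paper's route sidesteps this computation: equal directions give the same limit $[F]$ on Thurston's boundary by Theorem \ref{th:conv-stretch}, and parallelism then follows from Theorem \ref{th:5.3} (the survey's pointer to Theorem \ref{th:5.2} for this half appears to be a slip).

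In the converse, reducing to equality of projective directions via Theorem \ref{th:5.2} is fine and is essentially what the paper does (the survey derives the divergence from the estimates of Theorem \ref{th1}). But your final step forcing $c=1$ is inconsistent with the definitions in the paper: ``direction'' is taken up to projective class, and parallelism is defined only \emph{up to a parameter change}. If $F_g=cF_h$, the time shift $t\mapsto t-\log c$ turns the horocyclic foliation $e^t c F_h$ of $g_t$ into $e^t F_h$, so the two lines are parallel even when $c\neq 1$; your computation of $\log\bigl(l_{g_t}(\gamma_i)/l_{h_t}(\gamma_i)\bigr)$ ignores the allowed reparametrization and would wrongly rule this out. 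That step should be dropped and ``same direction'' read projectively.
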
 
The proof of the only if part of Theorem \ref{t:LM} uses the estimates in Theorem \ref{th1}, which show that if the two cylindrical stretch lines $g_t$ and $h_t$ are directed by two topologically distinct measured laminations $\lambda$ and $\lambda'$,  the two ratios $l_{h_{t}}(\lambda)/ l_{g_{t}}(\lambda)$ and $l_{g_{t}}(\lambda)/ l_{h_{t}}(\lambda)$ 
tend to infinity, therefore the two stretch lines diverge.

The proof of the if part is based on Theorem \ref{th:5.2}.

Theorem \ref{t:LM} should be contrasted with a result of Masur proved in \cite{Masur}, which says that there exist parallel cylindrical Teichm\"uller rays whose directions, as measured foliations, are distinct as projective classes.

Finally, we state the following result on cylindrical stretch lines:

\begin{theorem}[Th\'eret \cite{T1}, Theorem 5.3] \label{th:cylindrical-anti-stretch} Let  $h:[0,\infty]\to \mathcal{T}$ be a cylindrical stretch line. Then for any $c>0$, we have 
\[ \lim_{t\to\infty} d(h_{t+c}, h_t)=\infty.\]

\end{theorem}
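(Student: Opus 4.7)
The plan is to use the length-spectrum formula for Thurston's metric (Theorem \ref{K=L}),
\[d(h_{t+c}, h_t) \;=\; \log \sup_{\gamma \in \mathcal{S}} \frac{l_{h_t}(\gamma)}{l_{h_{t+c}}(\gamma)},\]
and exhibit a single simple closed curve whose length ratio tends to infinity. Since $t \mapsto h_t$ is a geodesic for $d$ one has the ``forward'' identity $d(h_t, h_{t+c}) = c$, so the content of the theorem is precisely the sharp asymmetry of the Thurston metric along a cylindrical stretch line.

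First I would select the test curve. By definition, a cylindrical stretch line has direction given (as a measured foliation) by a weighted multicurve $\lambda = \sum_{i} w_i \gamma_i$, that is, the horocyclic foliation $F_\mu(h_0)$ is represented by a disjoint union of foliated annuli with core curves $\gamma_i$ and positive weights $w_i$. Pick any single component and write $\gamma := \gamma_i$, $w := w_i$. This $\gamma$ will be the test curve.

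Next, I would feed $\gamma$ into the sharp asymptotic expansion of length under a cylindrical stretch, quoted in the excerpt as Th\'eret's Theorem 5.1 of \cite{T1}:
\[l_{h_t}(w\gamma) \;\sim\; 2\sqrt{k}\, e^{-e^t w/2} \quad \text{as } t \to \infty,\]
where $k$ is a positive integer counting certain non-foliated regions adjacent to the maximal cylinder with core $\gamma$. Applying this at times $t$ and $t+c$ and using the homogeneity $l_g(w\gamma) = w\, l_g(\gamma)$ of Proposition \ref{homogeneity} (so that the weight $w$ cancels in the ratio), we obtain
\[\frac{l_{h_t}(\gamma)}{l_{h_{t+c}}(\gamma)} \;\sim\; \exp\!\Bigl(\tfrac{w}{2}\bigl(e^{t+c} - e^t\bigr)\Bigr) \;=\; \exp\!\Bigl(\tfrac{w(e^c-1)}{2}\, e^t\Bigr),\]
which diverges to $+\infty$ doubly exponentially since $c > 0$ and $w > 0$. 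Taking logarithms and bounding the supremum in the length-spectrum formula from below by this particular ratio yields
\[d(h_{t+c}, h_t) \;\geq\; \log\frac{l_{h_t}(\gamma)}{l_{h_{t+c}}(\gamma)} \;\longrightarrow\; +\infty,\]
which is the conclusion.

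The main obstacle I foresee is not conceptual but technical: the asymptotic of Theorem 5.1 of \cite{T1} is stated under the additional hypothesis that the supporting maximal lamination $\mu$ be chain recurrent, whereas the present theorem only assumes that the stretch line is cylindrical. I would therefore either verify that cylindricality forces the support to be chain recurrent (plausible since the stump is a multicurve and the infinite leaves of any maximal completion $\mu$ spiral onto this multicurve in a controlled way, so $\mu$ is a Hausdorff limit of multicurves), or alternatively replace the asymptotic with a direct computation in the horocyclic model of the stretch map along a single boundary of the maximal cylinder with core $\gamma$; the latter reduces to the formula of \S\ref{ss:horocycles} for the length of a horocyclic arc in the upper half-plane and the fact that the transverse measure along such a boundary is multiplied by $e^t$ under the stretch, which is quite elementary.
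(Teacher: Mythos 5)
Your proposal is correct and follows essentially the route the paper indicates for Th\'eret's result: the divergence of $d(h_{t+c},h_t)$ is read off from the sharp asymptotic $l_{h_t}(w\gamma)\sim 2\sqrt{k}\,e^{-e^{t}w/2}$ for a core curve $\gamma$ of the direction, combined with the length-spectrum formula $K=L$. Your caveat about the chain-recurrence hypothesis in Theorem 5.1 of \cite{T1} is well taken but harmless here, since the survey itself applies that estimate to cylindrical stretch lines in exactly this way.
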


The statement of Theorem \ref{th:cylindrical-anti-stretch} is another illustration of the fact that the metric $K$ is not symmetric; in fact, its restriction to the given stretch line is not symmetric, since we have, for all $t\geq 0,$ $d(h_t,h_{t+c})=c$.

\section{On the asymptotic behavior of anti-stretch lines}\label{s:anti}

In this section, we study convergence of \emph{anti-stretch lines}, that is, stretch lines traversed in the reverse direction. 
In general, these lines are not geodesics for Thurston's metric but ther are geodesics for the asymmetric metric we call the \emph{reverse Thurston metric} $K'$ on Teichm\"uller space, defined by 
\[K'(x,y)=K(x,y)\] for $x$ and $y$ in $\mathcal{T}(S)$, where $K$ is the Thurston metric.
  We shall not be precise about the exact parametrization of an anti-stretch line, because we shall only be interested in its limiting behavior, that is, when the parameter goes to infinity.

Anti-stretch lines are not necessarily geodesics for Thurston's metric, even after reparametrization. In fact, there is no hope of obtaining a general result saying that an anti-stretch line converges to the geodesic lamination that supports it, since this geodesic lamination is not necessarily a measured lamination, that is, generally speaking, it does define an element of Thurston's boundary of Teichm\"uller space.
There are cases however where we can prove that an anti-stretch line converges to a point on Thurston's boundary. The first result in this direction was obtained in \cite[Proposition 5.2]{1991a} and in  a more general form in Theorem \ref{pro:uniquely} below.

We recall that a measured foliation, or a measured lamination, is said to be \emph{uniquely ergodic} of it carries only one transverse measure up to a multiple constant.
 
The results in this section are due to Th\'eret.

\begin{theorem}[Th\'eret, Theorem 3.2 of \cite{T4}]\label{pro:uniquely} Assume that the support of a stretch line is a geodesic lamination whose stump $\gamma$ is uniquely ergodic. Then, the corresponding anti-stretch line converges to the point on Thurston's boundary which is the projective class $[\gamma]$.
\end{theorem}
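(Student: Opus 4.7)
The plan is to use the compactness of Thurston's compactification $\mathcal{T}\cup\mathcal{PML}$ together with the convergence criterion stated at the end of \S\ref{s:notation}, and then to invoke the unique ergodicity of $\gamma$ to identify every subsequential limit of $h_{-t}$ with $[\gamma]$.

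First I would establish the identity $l_{h_t}(\gamma)=e^{t}l_{h_0}(\gamma)$: the formula $l_g(\mu)=i(\mu,F_\mu(g))$, extended (as mentioned in the excerpt) to maximal laminations with non-empty stump $\gamma$, combined with $F_\mu(h_t)=e^{t}F_\mu(h_0)$ and the linearity of intersection in transverse measures, gives $l_{h_t}(\gamma)=i(\gamma,e^{t}F_\mu(h_0))=e^{t}l_{h_0}(\gamma)$. Consequently $l_{h_{-t}}(\gamma)\to 0$, so $h_{-t}$ leaves every compact subset of $\mathcal{T}(S)$ (via the embedding of $\mathcal{T}(S)$ into $\mathbb{R}_+^{\mathcal{S}}$). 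By compactness of Thurston's compactification, it suffices to show that every subsequential limit of $h_{-t_n}$ in $\mathcal{PML}$ coincides with $[\gamma]$.

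Now fix $t_n\to\infty$ with $h_{-t_n}\to[\lambda]$ in $\mathcal{PML}$. By the convergence criterion there exist $x_n\to 0$ with $x_n l_{h_{-t_n}}(\alpha)\to i(\lambda,\alpha)$ for every $\alpha\in\mathcal{S}$. Using Proposition \ref{prop:extension} (density of weighted simple closed curves in $\mathcal{ML}$) together with continuity of length and intersection functions, this extends to $x_n l_{h_{-t_n}}(\nu)\to i(\lambda,\nu)$ for every $\nu\in\mathcal{ML}$. Applying it to $\nu=\gamma$ and using the displayed identity gives $i(\lambda,\gamma)=\lim_n x_n\cdot e^{-t_n}l_{h_0}(\gamma)=0$. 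The key additional input I would then establish is the anti-stretch analog of Theorem \ref{th1}(3): for $\alpha\in\mathcal{S}$ with $i(\alpha,\gamma)=0$, the length $l_{h_{-t}}(\alpha)$ stays bounded as $t\to\infty$. Granting this, $i(\lambda,\alpha)=0$ for every such $\alpha$; combined with $i(\lambda,\gamma)=0$, the support of $\lambda$ can neither cross $\gamma$ nor lie in a complementary subsurface of $\gamma$ carrying an essential simple closed curve, so it must be contained in the support of $\gamma$. Unique ergodicity of $\gamma$ forces $\lambda=c\gamma$ for some $c>0$, i.e.\ $[\lambda]=[\gamma]$, as desired.

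The principal obstacle is the boundedness claim just invoked. The Fundamental Lemma \ref{lemma:fund} does not apply directly, because its hypothesis $g_n\in V(\mu,\epsilon)$ fails for $g_n=h_{-t_n}$ precisely because of the pinching $l_{h_{-t_n}}(\gamma)\to 0$. Geometrically, however, the anti-stretch pinches $\gamma$ but leaves the complementary ideal-triangle geometry away from $\gamma$ essentially intact, so a curve $\alpha$ disjoint from $\gamma$ admits representatives that stay outside the shrinking collar of $\gamma$ and keep bounded length. To make this precise I would refine the rectangular cover used in the proof of Lemma \ref{lemma:fund}: split the rectangles into those that accumulate near $\gamma$ and those that stay in a fixed thick part of $h_0$; the first group contributes nothing to the length bound for $\alpha$ when $i(\alpha,\gamma)=0$ (thanks to the collar lemma), and the second group gives a uniform bound. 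This step is essentially the content of Th\'eret's forthcoming Theorem \ref{th2} in the anti-stretch direction, and completing it rigorously is the main technical work.
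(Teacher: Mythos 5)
Your overall strategy --- pass to subsequential limits in the compact space $\mathcal{T}\cup\mathcal{PML}$, show that each limit is supported in $\gamma$, and let unique ergodicity finish --- is the same as the one behind Th\'eret's proof, which rests on the statement (Corollary 3.1 of \cite{T4}, quoted after the theorem) that the cluster set of the negative ray is exactly the set of projective classes of measured laminations topologically contained in $\gamma$. Your computation $l_{h_t}(\gamma)=e^{t}l_{h_0}(\gamma)$ and the resulting facts $i(\lambda,\gamma)=0$ and $i(\lambda,\alpha)=0$ for every $\alpha\in\mathcal{S}$ with $i(\alpha,\gamma)=0$ are fine, modulo two points you already flag or should flag: upgrading the convergence criterion from $\mathcal{S}$ to all of $\mathcal{ML}$ needs a local uniformity argument, and the boundedness of $l_{h_{-t}}(\alpha)$ for $\alpha$ disjoint from $\gamma$ is part (4) of Theorem \ref{th2}, which you assume rather than prove.

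The genuine gap is in the last step, where you conclude that the support of $\lambda$ is contained in that of $\gamma$. The two conditions you establish do not imply this. Concretely, suppose $\gamma$ is uniquely ergodic and fills a proper subsurface $T\subset S$, and let $\delta$ be the geodesic representative of $\partial T$. Then $i(\delta,\gamma)=0$, and any $\alpha\in\mathcal{S}$ with $i(\alpha,\gamma)=0$ is isotopic either into $S\setminus T$ or to $\delta$ itself, so $i(\delta,\alpha)=0$ as well. Hence $\lambda=c\gamma+c'\delta$ with $c'>0$ (or even $\lambda=\delta$) passes both of your tests, yet $[\lambda]\neq[\gamma]$ in $\mathcal{PML}$, since $i(\delta,\cdot)$ is not proportional to $i(\gamma,\cdot)$ on curves crossing $T$. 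Your sentence ``the support of $\lambda$ can neither cross $\gamma$ nor lie in a complementary subsurface of $\gamma$ carrying an essential simple closed curve'' overlooks precisely the curves that are peripheral in the complementary pieces. Ruling these out cannot be done with the qualitative dichotomy (lengths bounded for disjoint $\alpha$, unbounded for crossing $\alpha$): one needs a quantitative comparison of growth rates, roughly $l_{h_{-t}}(\alpha)\sim i(\alpha,\gamma)\,w(t)$ with one and the same normalizing factor $w(t)\to\infty$ for all $\alpha$ meeting $\gamma$, which is what forces the limiting intersection function to be a multiple of $i(\gamma,\cdot)$ and kills any $\delta$-component. That estimate is the substantive content of Th\'eret's cluster-point theorem, and it is missing from your argument.
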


The proof of Theorem \ref{pro:uniquely} that is given in \cite{T4} is based on a description of the set of cluster points in the compactified Teichm\"uller space of the set of points of the negative part of a stretch line with non-empty stump $\gamma$. The set of cluster points turns out to be the set of  projective classes of measured geodesic laminations that are topologically contained in $\gamma$ (Corollary 3.1 of \cite{T4}).

A special case  of Theorem \ref{pro:uniquely}, namely, the case where  the support $\mu$ of the stretch  carries a transverse measure of full support which is uniquely ergodic, was obtained in \cite[Proposition 5.2]{1991a}.

An immediate consequence of Theorems \ref{th:conv-stretch} and  \ref{pro:uniquely} is the following:

\begin{corollary} 
Let $P$ and $Q$ be two points in Thurston's boundary $\mathcal{PMF}$ of Teichm\"uller space that can be represented by two transverse measured foliations on the surface, and assume that $P$ is uniquely ergodic. Then there is a geodesic in Teichm\"ulller space, namely, a stretch line, which converges in the negative direction to $P$ and positive direction to $Q$.
\end{corollary}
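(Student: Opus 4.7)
The plan is to realize the desired geodesic as a stretch line supported by an appropriately chosen maximal geodesic lamination $\mu$, using Theorem \ref{th:conv-stretch} for the positive endpoint $Q$ and Theorem \ref{pro:uniquely} for the negative endpoint $P$. The only thing to arrange is that $\mu$ has as stump a measured geodesic lamination representing $P$ and that $Q$ lies in $\mathcal{MF}(\mu)$, so that cataclysm coordinates (Theorem \ref{th:cataclysm}) produce the correct origin.

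More concretely, fix an auxiliary hyperbolic metric on $S$, let $F_{P}$ and $F_{Q}$ be transverse representatives of $P$ and $Q$, and let $\lambda_{P}$ denote the measured geodesic lamination obtained by straightening $F_{P}$. By hypothesis $\lambda_{P}$ is uniquely ergodic, so it is minimal; since $[P]\in\mathcal{PMF}$ comes from the Thurston compactification, its support fills $S$, and the complementary components of $\lambda_{P}$ are finitely many ideal polygons (possibly with punctures at vertices). I would complete $\lambda_{P}$ to a maximal geodesic lamination $\mu$ by adding, inside each complementary polygon, a finite collection of bi-infinite geodesic diagonals that together triangulate the polygon into ideal triangles. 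Because each added leaf is isolated it carries no transverse measure, so the stump of $\mu$ is exactly $\lambda_{P}$. The diagonals have to be chosen so that $F_{Q}$ admits a representative transverse to them: since $F_{Q}$ is transverse to $F_{P}$, its restriction to each complementary polygon of $\lambda_{P}$ is a measured foliation whose leaves enter and exit through the boundary, and an appropriate triangulation of the polygon by bi-infinite geodesics (with spiraling directions matched to the behavior of $F_{Q}$ near each vertex) will be transverse to this restriction. This yields $Q\in\mathcal{PMF}(\mu)$.

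With $\mu$ in hand, Theorem \ref{th:cataclysm} provides a unique $h_{0}\in\mathcal{T}(S)$ with $F_{\mu}(h_{0})$ equal to a chosen representative of $Q$ in $\mathcal{MF}(\mu)$. Define the stretch line
\[
h_{t}=\varphi_{\mu}^{-1}\!\bigl(e^{t}F_{\mu}(h_{0})\bigr),\qquad t\in\mathbb{R}.
\]
Theorem \ref{th:conv-stretch} gives $h_{t}\to [F_{\mu}(h_{0})]=Q$ in $\mathcal{PMF}$ as $t\to +\infty$, while Theorem \ref{pro:uniquely}, applied to the anti-stretch direction and the uniquely ergodic stump $\lambda_{P}$, gives $h_{t}\to [\lambda_{P}]=P$ as $t\to -\infty$, which proves the corollary.

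The hard part will be the construction of $\mu$ above, more precisely the verification that the bi-infinite diagonals used to triangulate the complementary polygons of $\lambda_{P}$ can be chosen transverse to $F_{Q}$. Transversality of $F_{Q}$ to the stump $\lambda_{P}$ comes for free from the hypothesis, but the added leaves may end either at cusps of $S$ or by spiraling onto leaves of $\lambda_{P}$, and the choice of these asymptotic behaviors has to be compatible with the local structure of $F_{Q}$ near the boundaries of the complementary regions. A combinatorial analysis of $F_{Q}$ restricted to each complementary polygon, possibly after an isotopy of $F_{Q}$ that pulls apart saddle connections, should suffice; one may also argue that generic choices of triangulation work.
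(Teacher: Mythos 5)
Your proposal is correct and follows exactly the route the paper intends: the corollary is stated there as an immediate consequence of Theorems \ref{th:conv-stretch} and \ref{pro:uniquely}, and your construction --- completing a measured geodesic lamination $\lambda_P$ representing $P$ to a maximal lamination $\mu$ with stump $\lambda_P$, arranging transversality with a representative of $Q$, and taking the stretch line whose horocyclic foliation represents $Q$ --- is precisely that implicit argument, spelled out in more detail than the paper itself provides. The one caveat is your claim that unique ergodicity forces the support of $\lambda_P$ to fill $S$ (hence that its complementary regions are ideal polygons): under the paper's definition a uniquely ergodic lamination need not fill (a single weighted simple closed curve qualifies), so the complementary subsurfaces may have genus or geodesic boundary and the completion-and-transversality step must be carried out in that generality too --- but this does not affect the overall strategy, which matches the paper's.
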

 
Th\'eret calls a stretch line \emph{elementary} if its stump is a union of simple closed geodesics. In the paper \cite{T3}, he studies the convergence to  points in Thurston's boundary of elementary anti-stretch lines (that is elementary stretch lines traversed in the reverse diresction). The result is stated in Proposition \ref{th:Theret-elementary} below. Before this, we make the following definition: 

Consider a maximal elementary geodesic lamination $\mu$. We endow each of the closed geodesics in its stump with the counting transverse measure, that is, with the transverse measure given by counting the number of intersection points. The projective
class of this transverse measure is called the \emph{barycenter} of the stump of $\mu$.

\begin{theorem}[Th\'eret, \cite{T3} Theorem 4.1] \label{th:Theret-elementary}
Any elementary anti-stretch line negatively converges to a point on Thurston's boundary which is the barycenter of the stump of the maximal geodesic lamination that supports it.
 \end{theorem}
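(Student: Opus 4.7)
The plan is to apply the convergence criterion of \S\ref{ss:foliations} which characterizes convergence of a sequence in $\mathcal{T}(S)$ to a point of Thurston's boundary. Let $\mu$ be the maximal elementary geodesic lamination supporting the stretch line $t\mapsto h_{t}$, let $\gamma=\gamma_{1}\cup\cdots\cup\gamma_{k}$ be its stump (a multicurve), and write $\bar{\gamma}=\gamma_{1}+\cdots+\gamma_{k}$ for the barycenter (each $\gamma_{i}$ equipped with its counting transverse measure). I want to exhibit positive reals $\varepsilon_{t}\to 0$ as $t\to-\infty$ such that for every $\alpha\in\mathcal{S}$,
\[
\varepsilon_{t}\,l_{h_{t}}(\alpha)\longrightarrow i(\bar{\gamma},\alpha)=\sum_{i=1}^{k}i(\gamma_{i},\alpha).
\]
The initial observation is that, by the definition of a stretch line in cataclysm coordinates (\S\ref{ss:cataclysm}), each closed leaf of the stump satisfies $l_{h_{t}}(\gamma_{i})=e^{t}l_{h_{0}}(\gamma_{i})$, so $l_{h_{t}}(\gamma_{i})\to 0$ as $t\to-\infty$; in particular $h_{t}$ tends to infinity in $\mathcal{T}(S)$.

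The heart of the proof will be to establish an asymptotic two-sided estimate
\[
l_{h_{t}}(\alpha)=-2t\sum_{i=1}^{k}i(\alpha,\gamma_{i})+O_{\alpha}(1)\qquad(t\to-\infty).
\]
For the lower bound, Buser's collar lemma provides, for each $i$ and for all $t$ sufficiently negative, an embedded annular neighborhood of $\gamma_{i}$ in $(S,h_{t})$ of width $w_{i}(t)=\mathrm{arcsinh}\bigl(1/\sinh(l_{h_{t}}(\gamma_{i})/2)\bigr)=-t+O(1)$, with the various collars pairwise disjoint. The geodesic representative of $\alpha$ in $(S,h_{t})$ traverses the collar around $\gamma_{i}$ exactly $i(\alpha,\gamma_{i})$ times, each traversal contributing length at least $2w_{i}(t)$; summing gives the lower bound. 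For the upper bound I construct a comparison curve in the class of $\alpha$ by concatenating, at each intersection of $\alpha$ with some $\gamma_{i}$, an essentially orthogonal crossing of the corresponding collar (of length at most $2w_{i}(t)+l_{h_{t}}(\gamma_{i})$, to absorb any needed twisting), with geodesic arcs lying in the complement of the collars. The key point is that the shift coordinates of the ideal triangulation $\mu$ are all multiplied by $e^{t}$ and therefore tend to $0$ as $t\to-\infty$; consequently the ``thick part'' of $(S,h_{t})$ (the complement of the collars) converges to the rigid hyperbolic structure obtained by replacing each $\gamma_{i}$ with a cusp and taking all shifts to be zero, and in particular its diameter and the number of ideal-triangle crossings performed by $\alpha$ are bounded by constants depending only on $\alpha$.

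Taking $\varepsilon_{t}=1/(-2t)$, which tends to $0$, the two-sided estimate yields $\varepsilon_{t}\,l_{h_{t}}(\alpha)\to\sum_{i}i(\alpha,\gamma_{i})=i(\bar{\gamma},\alpha)$ for every $\alpha\in\mathcal{S}$. The convergence criterion of \S\ref{ss:foliations} then identifies the limit of $h_{t}$ in Thurston's compactification as the projective class $[\bar{\gamma}]$, which is by definition the barycenter of the stump, and this is what was claimed.

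The main obstacle is the upper bound in the two-sided estimate. The collar-lemma lower bound is routine, but obtaining a matching upper bound requires uniform (in $t$) control of the length of $\alpha$ in the complement of the shrinking collars. The cleanest route I see is to exploit that the stretch line scales shifts linearly by $e^{t}$, so the pair-of-pants decomposition of $S$ along $\gamma$ converges, as $t\to-\infty$, to a limit structure consisting of a union of thrice-punctured spheres whose geometry is rigid and universal; alternatively, one can adapt Th\'eret's estimates from the proof of Theorem \ref{th1} to the reverse time direction, the stump of the anti-stretch line playing the role of the horocyclic measured lamination in the forward direction. This is essentially the strategy carried out in \cite{T3}.
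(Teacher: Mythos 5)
Your overall architecture --- the convergence criterion, the exact scaling $l_{h_t}(\gamma_i)=e^{t}l_{h_0}(\gamma_i)$ of the stump components, the collar-lemma lower bound, and a thin--thick decomposition for the upper bound --- is the same strategy as Th\'eret's, which this survey describes as resting on length estimates of horogeodesic curves along anti-stretch lines, a decomposition into thin and thick parts, and the Choi--Rafi comparison \cite{CR}. The lower bound and the choice $\varepsilon_t=1/(-2t)$ are fine, and it is worth recording why the easy route is closed: Proposition \ref{lemma:fund2} gives nothing here, because the non-closed leaves of $\mu$ spiral onto the stump, so $i(\alpha,\mu)=\infty$ for every $\alpha$ crossing some $\gamma_i$.

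The genuine gap is in the upper bound, exactly where you place ``the main obstacle,'' and your proposed fix does not close it. When you replace the portion of $\alpha$ inside the collar of $\gamma_i$ by a quasi-orthogonal crossing, the homotopy class of that arc rel endpoints is forced on you, and its winding number $n_i(t)$ around the core reflects the relative twisting of the two sides of $\gamma_i$ in the structure $h_t$; the shortest arc in that class costs roughly $2w_i(t)+|n_i(t)|\,l_{h_t}(\gamma_i)$, not $2w_i(t)+l_{h_t}(\gamma_i)$. Your ``$+\,l_{h_t}(\gamma_i)$ to absorb any needed twisting'' absorbs exactly one Dehn twist. To obtain the $O_{\alpha}(1)$ error you must show $|n_i(t)|\,e^{t}=O(1)$ (or at least $o(|t|)$, failing which the limit need not be the \emph{unweighted} barycenter), i.e.\ you must bound how fast the anti-stretch line twists about each pinching curve; nothing in the proposal does this, and it is precisely the point at which Th\'eret invokes the Choi--Rafi machinery. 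The companion claim that the complement of the collars ``converges to the rigid hyperbolic structure \dots with all shifts zero,'' so that the thick-part arcs of $\alpha$ have length $O_{\alpha}(1)$, is plausible but unproved as stated (it is essentially the quasi-isometry corollary at the end of \S\ref{s:anti}, which itself rests on Theorems \ref{th1} and \ref{th2}). Finally, $\mu$ is not an ideal triangulation --- its stump is nonempty --- so the shift-coordinate picture you invoke is the one for finite laminations with spiraling leaves, where the relation between the shifts and the twist about $\gamma_i$ is exactly the quantity left unestimated.
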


An elementary stretch line whose stump consists of simple closed geodesics that are in more than one homotopy class is an example of a a stretch line with a non-uniquely ergodic stump. Thus, Theorem \ref{th:Theret-elementary} gives examples of negatively converging stretch lines whose stump is not uniquely ergodic. 

The proof of Theorem \ref{th:Theret-elementary} is based on a series of length estimates of horogeodesic curves along anti-stretch lines, some of the estimates involving a decomposition of the surface into thin and thick parts, and using the work of Choi and Rafi in \cite{CR}.

  The following result by Th\'eret concerns the behavior of lengths of measured geodesic laminations under anti-stretch lines:

\begin{theorem}[Th\'eret, \cite{Theret-Thesis}, Theorem 6 p. 77]\label{th2}

Let $h:\mathbb{R}\to \mathcal{T}(S)$, $t\mapsto h_t$ be a stretch line passing through a hyperbolic metric $h=h(0)$ and supported by a geodesic lamination $\mu$. Let $\sigma$ be the stump of $\mu$ and assume that $\sigma$ is not empty. Then, for any measured geodesic lamination $\lambda$, we have:
\begin{enumerate}
\item $\lim_{t\to-\infty}l_{h_{t}}(\lambda)=0$ if and only if the support of $\lambda$ is contained in the support of $\sigma$.

\item If the support of $\lambda$ is not contained in the support of $\sigma$, then there exists a positive number $\epsilon(\lambda)$ such that for all $t\leq 0$, we have $l_{h_{t}}(\lambda)\geq \epsilon (\lambda)$. 

\item $\lim_{t\to-\infty}l_{h_{t}}(\lambda)=+\infty$ if and only if  $\lambda$ has non-empty transverse intersection with  $\sigma$.

\item $l_{h_{t}}(\lambda)$ is bounded for $t\leq 0$ if $\lambda$ disjoint from $\sigma$.
\end{enumerate}

\end{theorem}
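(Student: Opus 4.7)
The plan is to derive the four conclusions from three pillars, corresponding to the three possible topological relations between $\operatorname{supp}(\lambda)$ and $\operatorname{supp}(\sigma)$.

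Pillar A (exact scaling on the stump). The first step is to establish the identity
\[ l_{h_t}(\lambda) \;=\; e^{t}\, l_{h_0}(\lambda) \qquad \text{whenever } \operatorname{supp}(\lambda)\subseteq \operatorname{supp}(\sigma). \]
I would prove this by extending the formula $l_g(\sigma)=i(\sigma,F_\mu(g))$, cited in the excerpt from \cite{T4} for maximal laminations with non-empty stump, to every measured sublamination of $\mu$. The extension rests on the fact that the horocyclic foliation realizes arc length along leaves of $\mu$ inside each complementary ideal triangle, so the identity carries over to any $\lambda$ with $\operatorname{supp}(\lambda)\subseteq\operatorname{supp}(\mu)$. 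Combining this with the defining relation $F_\mu(h_t)=e^t F_\mu(h_0)$ of the stretch parametrization yields the scaling identity. As $t\to -\infty$ we obtain $l_{h_t}(\lambda)\to 0$, which establishes the ``if'' direction of (1) and in particular $l_{h_t}(\sigma)\to 0$.

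Pillar C (transverse intersection via the collar lemma). If $i(\lambda,\sigma)>0$, Pillar A applied to $\sigma$ itself shows that the leaves of $\sigma$ become short in $h_t$. By the collar lemma, the standard collar around these short leaves has width comparable to $\log(1/l_{h_t}(\sigma))$, and any measured lamination transverse to $\sigma$ must cross this collar with total transverse mass at least $i(\lambda,\sigma)$. This produces a lower bound
\[ l_{h_t}(\lambda) \;\geq\; C\, i(\lambda,\sigma)\,\log\!\bigl(1/l_{h_t}(\sigma)\bigr), \]
which tends to infinity as $t\to -\infty$, giving the ``if'' direction of (3).

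Pillar B (disjoint supports). If $\operatorname{supp}(\lambda)$ and $\operatorname{supp}(\sigma)$ are topologically disjoint, I would show that $l_{h_t}(\lambda)$ stays bounded above and below by positive constants on $t\leq 0$. The idea is that the anti-stretch deformation is concentrated in the direction of the leaves of $\mu$: away from $\sigma$, the hyperbolic structure $h_t$ degenerates, as $t\to -\infty$, to a noded hyperbolic surface obtained by pinching along $\operatorname{supp}(\sigma)$, whose complementary pieces carry a well-defined limit hyperbolic structure. Since $\lambda$ is disjoint from $\sigma$, its geodesic representative remains in the thick part, and its length converges to its length on this limiting noded surface, which is finite and strictly positive by essentiality of $\lambda$. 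This yields (4) and supplies the needed lower bound in (2).

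With the three pillars in place, the four statements follow by a case analysis. For (2) one decomposes $\lambda = \lambda_1 + \lambda_2$ into the part carried by $\operatorname{supp}(\sigma)$ and the part not, so that $l_{h_t}(\lambda) \geq l_{h_t}(\lambda_2)$ is bounded below by either Pillar B or Pillar C applied to $\lambda_2$. The ``only if'' direction of (1) is the contrapositive of Pillars B and C, and the ``only if'' direction of (3) is the contrapositive of Pillars A and B. The main obstacle is Pillar B: in the positive direction the horocyclic foliation directly controls the limit, whereas in the anti-stretch direction the stump is in general not uniquely ergodic and its support need not exhaust $\mu$, so the convergence of $h_t$ to a noded surface and the resulting uniform bounds on lengths of curves away from $\sigma$ require a careful thick--thin decomposition argument near the pinching locus, combined with estimates on how the mass of $F_\mu(h_t)$ concentrates away from $\sigma$ as $t\to -\infty$.
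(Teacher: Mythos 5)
Your Pillars A and C are, for parts (1) and (3), essentially the paper's own argument: (1) comes from the identity $l_g(\sigma)=i(\sigma,F_\mu(g))$ of \cite{T4} together with $F_\mu(h_t)=e^tF_\mu(h_0)$, giving exact exponential decay of length on sublaminations of the stump, and (3) comes from the principle that a lamination meeting a shrinking lamination transversely must blow up in length. One caveat on Pillar C: your collar-width inequality is only literally valid when $\sigma$ is a multicurve. For a minimal non-closed stump the leaves are bi-infinite, so ``short leaves'' and ``the standard collar around these short leaves'' are not defined, and $l_{h_t}(\sigma)\to 0$ does not by itself produce a wide embedded collar. The paper flags exactly this point and defers the general case to \cite[Theorem 5, p.~67]{Theret-Thesis}; you should either restrict to multicurve stumps or invoke that theorem rather than ``the collar lemma''.

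The genuine gap is Pillar B, which carries statement (4), the lower bound in (2) in the case where the components of $\lambda$ not contained in $\sigma$ are disjoint from $\sigma$, and hence the ``only if'' direction of (1). First, ``the noded hyperbolic surface obtained by pinching along $\operatorname{supp}(\sigma)$'' is not a meaningful object when $\sigma$ is not a multicurve, so the argument does not even get started for a general stump. Second, even for a multicurve stump, the assertion that the complementary pieces carry a convergent limiting hyperbolic structure is precisely what has to be proved, and your justification is circular: you place the geodesic representative of $\lambda$ ``in the thick part'', but the claim that no curve disjoint from $\sigma$ becomes short along the anti-stretch ray is exactly the content of (2). Nothing in the proposal rules out a second, unexpected degeneration away from $\sigma$ as $t\to-\infty$. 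The upper bound in (4) is recoverable from the Fundamental Lemma comparison (Lemma \ref{lemma:fund}, extended from simple closed curves to measured laminations as in \cite[Corollary 2, p.~61]{Theret-Thesis}), since $i(\lambda,F_\mu(h_t))=e^{t}\,i(\lambda,F_\mu(h_0))$ stays bounded for $t\le 0$; but the positive lower bound requires the thick--thin analysis you only allude to in your final sentence, and as written the proposal does not contain it.
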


The proof of (1) uses the fact that the length of the stump of $\mu$ is equal to its intersection number with the horocyclic foliation (or the geodesic lamination representing it). One applies then this formula to the hyperbolic structure $h_t$ for $t<0$ and finds a quantity that tends to 0 as $t\to-\infty$. The proof of (3) follows from (1) and the fact that when the length of a measured lamination converges to 0, the length of any measured lamination intersecting it transversely converges to infinity. This is well known in the case where the geodesic laminations are simple closed curves (it follows from the collar lemma), and it is proved in the general case in \cite[Theorem 5, p. 67]{Theret-Thesis}.

From Theorems \ref{th1} and \ref{th2}, Th\'eret deduces the following

\begin{corollary}
Let $h:\mathbb{R}\to \mathcal{T}(S)$ be a stretch line directed by a horocyclic geodesic lamination $L_{\mu}(h)$ and supported by a geodesic lamination with stump $\sigma$. Let $S'$ be a sub-surface of $S$ that 
does not intersect neither $L_{\mu}(h)$ nor $\sigma$. Then, along the stretch line $h(t)$, the subsurfaces corresponding to $S'$, equipped with their hyperbolic structures, are all quasi-isometric, with a quasi-isometry constant that does not depend on $t$.
\end{corollary}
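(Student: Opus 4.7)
The plan is to reduce the problem to uniform length bounds on all essential simple closed curves lying in $S'$, and then to invoke compactness in the moduli space of $S'$. Let $\gamma$ be any essential simple closed curve contained in $S'$, for instance a component of $\partial S'$ or a curve from a pants decomposition of $S'$. Since $S'$ does not meet the supports of the horocyclic measured lamination $L_\mu(h)$ or of the stump $\sigma$, the geodesic representative of $\gamma$ is disjoint from both laminations, so $i(\gamma,L_\mu(h))=0$, $i(\gamma,\sigma)=0$, and moreover $\gamma$ is not contained in the support of $\sigma$.

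For the positive direction of the stretch line, part (3) of Theorem~\ref{th1} applied to the measured lamination $\gamma$ (with its counting measure) provides strictly positive constants bounding $l_{h_t}(\gamma)$ from above and below for $t\geq 0$. For the negative direction, parts (2) and (4) of Theorem~\ref{th2} together provide strictly positive constants bounding $l_{h_t}(\gamma)$ from above and below for $t\leq 0$. Combining, there exist constants $a_\gamma$, $b_\gamma$ with
\[
0 < a_\gamma \leq l_{h_t}(\gamma)\leq b_\gamma \qquad\text{for every }t\in\mathbb{R}.
\]

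View each restriction $h(t)|_{S'}$ as a hyperbolic surface with geodesic boundary of the fixed topological type of $S'$. Applying the previous step simultaneously to the components of $\partial S'$ and to the curves of a fixed pants decomposition of the interior of $S'$, the Fenchel--Nielsen length coordinates of $h(t)|_{S'}$ stay in a compact subset of $(0,\infty)^{N}$ independent of $t$. Since in the moduli space twist parameters are taken modulo integer translations depending on the lengths, it follows that the class $[h(t)|_{S'}]$ lies in a compact subset $\mathcal{K}\subset\mathcal{M}(S')$ which does not depend on $t$.

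The final step is to deduce a uniform quasi-isometry constant from this compactness. Covering $\mathcal{K}$ by finitely many evenly covered charts of the projection $\mathcal{T}(S')\to\mathcal{M}(S')$, one can choose, for each $[g]\in\mathcal{K}$, a marked representative $g\in\mathcal{T}(S')$ whose Thurston (asymmetric) distance to a fixed reference marked structure in $\mathcal{T}(S')$ is uniformly bounded by some constant $C$, by the continuity of the Thurston metric together with compactness of $\mathcal{K}$. Hence any two members of $\mathcal{K}$ are at Thurston distance at most $2C$ from each other; by the definition of $L$ (equal to $K$ by Theorem~\ref{K=L}) they are then $e^{2C}$-bi-Lipschitz equivalent via a homeomorphism in the canonical isotopy class, and in particular uniformly quasi-isometric. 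Applied to the family $\{[h(t)|_{S'}]:t\in\mathbb{R}\}\subset\mathcal{K}$, this yields the corollary. The main obstacle is precisely this last passage: individual pointwise length bounds only control finitely many curves directly, and it is the Mumford-type compactness for hyperbolic surfaces with bounded geodesic boundary, combined with the Lipschitz characterization of Thurston's metric, that converts this compactness into a single uniform quasi-isometry constant valid for all $t$.
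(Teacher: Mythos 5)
The paper states this corollary without proof, attributing the deduction to Theorems \ref{th1} and \ref{th2}, so there is no detailed argument of the author's to compare against; your write-up is a reasonable and essentially correct way to fill in that deduction, and it uses exactly the intended inputs. Your first step is right: for any essential simple closed curve $\gamma$ in $S'$, Theorem \ref{th1}(3) gives two-sided bounds on $l_{h_t}(\gamma)$ for $t\geq 0$, and Theorem \ref{th2}(2) and (4) give two-sided bounds for $t\leq 0$. You also correctly identify the genuine difficulty, namely that these constants depend on $\gamma$, so one cannot simply take the supremum over all curves in the $K=L$ formula for $\mathcal{T}(S')$; routing the argument through boundedness of the Fenchel--Nielsen lengths of a fixed pants decomposition of $S'$ and Mumford-type compactness in $\mathcal{M}(S')$ is the standard way to convert finitely many length bounds into a uniform bi-Lipschitz (hence quasi-isometry) constant.

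Two caveats. First, Theorem \ref{th2} is stated under the hypothesis that the stump $\sigma$ is non-empty; if $\sigma=\emptyset$ the negative-time bounds need a separate (easy) justification, and your proof should at least record this case distinction. Second, the last step deserves slightly more care than ``distance at most $2C$ implies $e^{2C}$-bi-Lipschitz'': from $L(g,h)\leq C$ and $L(h,g)\leq C$ one gets Lipschitz homeomorphisms in each direction, but not that one is the inverse of the other, so the lower quasi-isometry inequality for $\varphi$ requires a uniform bound on the displacement of the composition $\psi\circ\varphi$ (isotopic to the identity). This is immediate from the uniform diameter bound when $S'$ is compact, but if $S'$ contains cusps of $S$ the diameter is infinite and one should argue via the thick part (or note that cusp neighborhoods of definite size are uniformly quasi-isometric to each other). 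These are repairable details rather than flaws in the approach.
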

\section{Concluding remarks}\label{s:conclusion}

The results that we presented on the asymptotic behavior of anti-stretch lines involve various hypotheses on the foliations or laminations: the stump being uniquely ergodic, or being a multicurve (elementary stretch line), the direction being a multicurve (cylindrical stretch line), etc. A study of the limiting behavior of other cases remains open.

 Let me also point out that there are many other classes of geodesics for Thurston's metric than stretch lines, some of them admitting explicit descriptions that generalize Thurston's stretch rays (see e.g. the geodesics in the papers \cite{2009j} and \cite{2015-Yamada1} based on stretch maps between right-angled hexagons, and those in the paper \cite{HP1} based on stretch maps between Saccheri ideal quadrilaterals). It would be interesting to study the asymptotic behavior of the various kinds of geodesics of Thurston's metric (possibly all geodesics), in the positive and negative direction, and see what results that are known for some specific types of stretch and anti-stretch lines, regarding parallelism and convergence to points on Thurston's boundary, hold in more general settings. 
 
 The study of the behavior of geodesics in Teichm\"uller space leads to the study of the geodesic flow on that space. One expects that the study of the geodesic flow associated with the Thurston metric, with the variety of geodesics that it involves and with the complications arising from the fact of the metric is non symmetric and that there exist many kinds of geodesics that join two points, will lead to several interesting developments.
 
 Finally, let me mention that very little is known on the geometry of the symmetrization of the Thurston metric (this applies to the max and the arithmetic symmetrizations).

\end{document}